\newcommand{\R}{\mathbb{R}}
\newcommand{\N}{\mathbb{N}}
\newcommand{\eps}{ \varepsilon}
\newcommand{\tr}{\mathrm{tr}}
\newcommand{\C}{\mathbb{C}}
\newcommand{\supp}{\textrm{supp }}
\numberwithin{equation}{section}
\newtheorem{theorem}{Theorem}[section]
\newtheorem*{theorem*}{Theorem}
\newtheorem{corollary}{Corollary}[section]
\newtheorem{proposition}{Proposition}[section]
\newtheorem{lemma}{Lemma}[section]
\theoremstyle{definition}
\newtheorem{definition}{Definition}[section]
\newtheorem{condition}{Condition}[section]
\author{Wentao Cao}
\address{Institut f\"{u}r mathematik, Universit\"{a}t Leipzig, Augustusplatz 10, D-04109, Leipzig, Germany}
\email{wentao.cao@math.uni-leipzig.de}
\author{L\'aszl\'o Sz\'ekelyhidi Jr.}
\address{Institut f\"{u}r mathematik, Universit\"{a}t Leipzig, Augustusplatz 10, D-04109, Leipzig, Germany}
\email{laszlo.szekelyhidi@math.uni-leipzig.de}
\title[Global Nash-Kuiper theorem]
{Global Nash-Kuiper theorem for compact manifolds}
\date{\today}
\keywords{Nash-Kuiper theorem, global isometric immersions, isothermal coordinates, convex integration, h-principle}
\subjclass[2010]{ 53C24, 58A07}
\begin{document}

\begin{abstract}
We obtain global extensions of the celebrated Nash-Kuiper theorem for $C^{1,\theta}$ isometric immersions of compact manifolds with optimal H\"older exponent. In particular for the Weyl problem of isometrically embedding a convex compact surface in 3-space, we show that the Nash-Kuiper non-rigidity prevails upto exponent $\theta<1/5$. This extends previous results on embedding 2-discs as well as higher dimensional analogues.\end{abstract}

\maketitle

%
\section{Introduction}
 Let $(\mathcal{M}, g)$ be a compact $n$-dimensional manifold with $C^1$ metric $g$.
 The celebrated Nash-Kuiper theorem \cite{Nash54,Kui55} states that any short immersion or embedding $u:\mathcal{M}\hookrightarrow \R^{n+1}$ can be uniformly approximated by $C^1$ isometric immersions/embeddings. As a particular case, for the classical Weyl problem, i.e.~$(S^2,g)\hookrightarrow \R^3$ with positive Gauss curvature $\kappa_g>0$ this result implies the existence of a vast set of non-congruent $C^1$ surfaces, each isometric to $(S^2,g)$. This is in stark contrast with the situation for $C^2$ isometric embeddings: the famous rigidity theorem of Cohn-Vossen \cite{CohnVossen} and Herglotz \cite{Herglotz:1943je} states that the $C^2$ isometric embedding $(S^2,g)\hookrightarrow \R^3$ is uniquely determined up to congruencies. 
 
 The question of what happens inbetween the rigid $C^2$ case and the highly non-rigid $C^1$ case on the H\"older scale $C^{1,\theta}$ has a long history. In the 1950s in a series of papers \cite{Borisov58,BorisovRigidity1} Borisov, building upon the work of Pogorelov \cite{PogorelovRigidity}, showed that the rigidity of convex surfaces prevails for $\theta>2/3$. A short modern proof based on regularization and a commutator estimate was provided in \cite{CDS12}. More recently there has been intensive work on lowering the rigidity exponent \cite{DeLellis:2018ub,Gladbach:2019ti}, one conjecture being that some form of rigidity should hold for all $\theta>1/2$ \cite{Gromov:2015tua,DeLellis:2017dt,DeLellis:2018ub}.
 
Regarding the flexible side, Borisov announced in \cite{Bor65} that the Nash-Kuiper statement continues to be valid locally (i.e.~for embedding Euclidean balls) for any $\theta<(1+n(n+1))^{-1}$ and globally (i.e.~embedding general compact manifolds) for any $\theta<(1+n(n+1)^2)^{-1}$. In particular for the Weyl problem this yields $\theta<1/7$. A detailed proof for the 2-dimensional case for $\theta<1/13$ appeared in \cite{Bor04}. Subsequently, a simplified proof for the general case appeared in \cite{CDS12}. More recently, in \cite{DIS15} the exponent was raised to $\theta<1/5$ for the special case of embedding 2-dimensional discs $(D^2,g)\hookrightarrow\R^3$ by employing reparametrisations to conformal coordinates. Although not explicitly stated in the paper \cite{DIS15}, the same technique would be extendible to $(S^2,g)\hookrightarrow\R^3$ with $\theta<1/7$. 

One of the key features in all previous works is that global embeddings suffer from a loss of regularity and lower H\"older exponents compared to local results. In the present paper our main goal is to obtain global results with the optimal exponents, thus rectifying this shortcoming. 

\begin{theorem}[Main Theorem - 2D case]\label{t:global}
Let $(\mathcal{M},g)$ be a 2-dimensional compact manifold with $C^1$ metric and let $u:(\mathcal{M}, g)\hookrightarrow \R^3$ be a short immersion of class $C^1$. For any $\eps>0$ and any $0<\theta<{1}/{5}$ there exists a $C^{1, \theta}$ isometric immersion $v:(\mathcal{M}, g)\hookrightarrow \R^3$ such that  $\|v-u\|_{C^0(\mathcal{M})}<\eps$. Moreover, if $u$ is an embedding, $v$ can be chosen to be an embedding.
\end{theorem}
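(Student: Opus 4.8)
The plan is to follow the Nash--Kuiper convex integration scheme, but to run it adapted to \emph{isothermal (conformal) coordinates} so as to reduce the number of primitive metrics needed in each stage, which is what governs the final H\"older exponent. First I would reduce to a local statement: by compactness cover $\mathcal{M}$ by finitely many coordinate charts, and on each chart invoke the existence of isothermal coordinates for the $C^1$ metric $g$ (this is the classical Korn--Lichtenstein / measurable Riemann mapping input), so that in these coordinates $g = e^{2\rho}\,\mathrm{Id}$. The point of working conformally is that in $2$D a conformal metric $e^{2\rho}\,\mathrm{Id}$ minus the induced metric of the current immersion is, after subtracting a small constant-coefficient correction and mollifying, a sum of only \emph{two} rank-one "primitive" metrics of the form $a_k^2\,\nabla\xi_k\otimes\nabla\xi_k$, rather than the $n(n+1)/2 = 3$ that a generic metric decomposition would force. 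Reducing the stage length from $3$ spirals to $2$ is exactly what pushes the admissible exponent from $1/7$ up to $1/5$ — this is the mechanism already exploited in \cite{DIS15} for the disc, and the global novelty is to make it work across charts on a closed surface.

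Next I would set up the iteration: construct a sequence $v_q$ of immersions, with associated metric errors $g - v_q^* e \to 0$ and frequency/amplitude parameters $\lambda_q \to \infty$, $\delta_q \to 0$ chosen (super)exponentially with $\lambda_{q+1} = \lambda_q^{b}$, $\delta_q \sim \lambda_q^{-2\theta'}$ for a suitable $\theta' > \theta$ and $b>1$. Each step $v_q \mapsto v_{q+1}$ is a "stage" built from two "steps", each step adding a Nash spiral (a high-frequency oscillation in a normal direction, i.e.\ a corrugation) along a single primitive direction $\nabla\xi_k$ to cancel one of the two rank-one pieces of the mollified conformal error. One must interpolate carefully between charts using a partition of unity subordinate to the cover, gluing the local corrections so that on overlaps the immersion stays $C^1$-close and the error estimates (a $C^0$ bound on the new error of size $\sim \delta_{q+1}$ and a $C^1$ bound $\sim \delta_{q+1}\lambda_{q+1}$, plus a $C^2$ bound $\sim \delta_{q+1}\lambda_{q+1}^2$) are uniform. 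The embedding statement then follows from the immersion statement by the standard observation that $C^0$-smallness of the perturbation preserves injectivity once $u$ is an embedding of a compact manifold, since injectivity is an open condition there.

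The main obstacle I anticipate is the \emph{globalization} of the conformal trick: isothermal coordinates exist only locally and the conformal factors $\rho$ differ from chart to chart, so the two-primitive-metric decomposition is chart-dependent, and one must ensure that summing the two-step stages over a partition of unity does not reintroduce a third independent direction in the overlaps (which would cost a spiral and drop the exponent back toward $1/7$). The fix is to arrange the covering and the induction so that at each stage the active error is supported, chart by chart, with controlled overlap multiplicity, and to absorb the partition-of-unity cross terms into the next stage's error via the usual quadratic/commutator estimates — keeping track that the mollification length and the gluing are both at scale $\ell_q \sim \lambda_q^{-1}\delta_q^{-1/2}$ (the natural length associated with the $C^2$ norm of $v_q$), so that the loss of derivatives in the commutator estimate is exactly balanced. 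A secondary technical point is handling the low regularity $g \in C^1$: the metric must be mollified at scale $\ell_q$ too, producing an extra error term $\|g - g * \phi_{\ell_q}\|_{C^0} = o(\delta_{q+1})$, which is fine since $\delta_{q+1}/\delta_q \to 0$ and $C^1$-mollification gains a genuine (if non-quantitative) rate; this is why the theorem needs only $g \in C^1$ and still reaches every $\theta < 1/5$ but not $\theta = 1/5$ itself.
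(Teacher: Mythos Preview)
Your proposal correctly identifies the local mechanism --- conformal coordinates reduce each stage to two primitive metrics, which is what yields the exponent $1/5$ locally --- and you are right that the core difficulty is globalization. However, the fix you sketch (partition-of-unity gluing with cross terms absorbed into the next error) is precisely what \emph{fails} to reach $1/5$ globally: when charts overlap, each overlap region receives corrugations from each chart, so the effective number of spirals per stage is multiplied by the overlap multiplicity, and the admissible exponent drops accordingly. This is exactly the phenomenon behind the historical gap between local and global exponents; the paper's introduction notes that the \cite{DIS15} technique extended na\"ively to $S^2$ would only give $\theta<1/7$, and that ``global embeddings suffer from a loss of regularity and lower H\"older exponents compared to local results'' is the defect this paper sets out to repair. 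Your commutator-estimate remark does not address this, because the loss is not a derivative-counting issue but a combinatorial one: too many independent directions are being corrected in the same region.

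The paper's actual route is genuinely different from your proposal. Rather than gluing over a partition of unity, it introduces the notion of an \emph{adapted short immersion} (a strongly short immersion whose deviation from isometry is quantitatively linked to the blow-up of $\nabla^2 u$, Definition~\ref{d:adapt}) and performs an induction not over stages but over the \emph{skeleta of a fixed triangulation} $\emptyset=\Sigma_0\subset\Sigma_1\subset\Sigma_2\subset\Sigma_3=\mathcal{M}$. The key is the geometric Condition~\ref{c:geometric}: at each inductive step the region where one needs to perturb (a thin neighbourhood of $\Sigma_{j+1}$ away from $\Sigma_j$) decomposes into \emph{pairwise disjoint} pieces, each lying in a single chart. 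Thus Corollary~\ref{c:metric-1} (the two-spiral conformal stage) can be applied chart by chart with no overlap and hence no multiplicity loss; this is the content of Proposition~\ref{p:inductive}. The price is that the exponent degrades slightly at each of the finitely many skeleton steps (via the parameter $b>1$ in \eqref{e:theta-b-condition}), but since $b\to 1$ as the auxiliary parameter $\alpha\to 0$, one still reaches every $\theta<1/5$.
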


More generally, we are able to obtain a similar result for general $n$-dimensional case, although, since conformal coordinates are not available in this case, the H\"older exponent for $n=2$ is worse than in Theorem \ref{t:global} above.

\begin{theorem}[Main Theorem - nD case]\label{t:global-n}
Let $(\mathcal{M},g)$ be an n-dimensional compact manifold, $n\geq 3$, with $C^1$ metric and let $u:(\mathcal{M}, g)\hookrightarrow \R^{n+1}$ be a short immersion of class $C^1$. For any $\eps>0$ and any $0<\theta<\frac{1}{n^2+n+1}$ there exists a $C^{1, \theta}$ isometric immersion $v:(\mathcal{M}, g)\hookrightarrow \R^{n+1}$ such that  $\|v-u\|_{C^0(\mathcal{M})}<\eps$. Moreover, if $u$ is an embedding, $v$ can be chosen to be an embedding.
\end{theorem}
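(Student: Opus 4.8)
The plan is to prove Theorem~\ref{t:global-n} by the Nash--Kuiper convex integration scheme in the sharp form of \cite{CDS12,DIS15}, organised into \emph{stages} indexed by $q\in\N$, with coupled parameter sequences $\lambda_q\nearrow\infty$ (super-exponentially, $\lambda_{q+1}=\lambda_q^{b}$) and $\delta_q=\lambda_q^{-2\beta}\searrow 0$. The one new feature --- and the point that removes the regularity loss of earlier global constructions --- is that the elementary corrugations attached to a \emph{fixed} finite atlas are added in parallel within each stage, so that the number of frequency-escalation sub-steps per stage equals the local value $N=N(n)$ rather than $N$ times the multiplicity of the cover.

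\emph{Set-up and reductions.} Since $g\in C^1$ only, in stage $q$ we work with a mollification of $g$ at scale $\lambda_q^{-1}$; the errors $\lesssim\|g\|_{1}\lambda_q^{-1}$ are summable, absorbed into the scheme, and the limiting metric is the original $g$. After a small homothety we may assume $u$ is strictly short, $g-u^{\sharp}e\ge c_0\,\mathrm{Id}>0$, so the metric defect stays positive-definite. Fix once and for all a finite atlas $\{U_j\}_{j=1}^{L}$ of coordinate charts, a subordinate partition of unity $\{\phi_j^2\}$ with $\sum_j\phi_j^2\equiv 1$ and the cover of bounded multiplicity, and in each chart a finite set of constant covectors $\{\nu_{j,k}\}_{k=1}^{N}$, $N=N(n)$, such that $\{\nu_{j,k}\otimes\nu_{j,k}\}_k$ positively spans a fixed neighbourhood of $\mathrm{Id}$ in $\mathrm{Sym}^+_n$ (the standard primitive-metric decomposition, as in \cite{Nash54,Kui55,CDS12}) and such that for every overlapping pair $U_j\cap U_{j'}\ne\emptyset$ the covectors $\nu_{j,k}$ and $\nu_{j',k'}$, read in a common frame over the overlap, are pairwise non-parallel --- a finite list of generically satisfiable conditions. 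The constants $\beta,b$ are then fixed as in \cite{CDS12}.

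\emph{The stage.} The inductive statement is: from $u_q\in C^2$ with $u_q^{\sharp}e\le g$, $\|g-u_q^{\sharp}e\|_0\le\delta_{q+1}$ and $\|u_q\|_2\lesssim\delta_q^{1/2}\lambda_q$ one builds $u_{q+1}\in C^2$ with $u_{q+1}^{\sharp}e\le g$, $\|g-u_{q+1}^{\sharp}e\|_0\le\delta_{q+2}$, $\|u_{q+1}-u_q\|_0\lesssim\delta_{q+1}^{1/2}\lambda_q^{-1}$, $\|u_{q+1}-u_q\|_1\lesssim\delta_{q+1}^{1/2}$ and $\|u_{q+1}\|_2\lesssim\delta_{q+1}^{1/2}\lambda_{q+1}$; an embedding stays an embedding, since every $C^0$ change is $o(1)$. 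One mollifies $u_q$, writes the (suitably normalised, hence of bounded dynamic range) defect as $g-u_q^{\sharp}e=\sum_{k=1}^{N}\sum_{j=1}^{L}(\phi_j a_{j,k})^2\,\nu_{j,k}\otimes\nu_{j,k}$ by combining the partition of unity with the primitive-metric decomposition in each chart, and then proceeds in $N$ \emph{rounds} with geometrically increasing frequencies $\lambda_q=\mu_0<\mu_1<\dots<\mu_N=\lambda_{q+1}$. In round $k$ one adds, simultaneously over all charts, a Kuiper-type corrugation $\Delta u_{j,k}$ of frequency $\mu_k$, localised by $\phi_j$, oscillating in direction $\nu_{j,k}$ and of amplitude $\sim\phi_j a_{j,k}$, so that the diagonal part of $(u+\sum_j\Delta u_{j,k})^{\sharp}e-u^{\sharp}e$ equals $\sum_j(\phi_j a_{j,k})^2\nu_{j,k}\otimes\nu_{j,k}$ up to high-frequency terms; summing over $k$ cancels the full defect. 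The defect $\|g-u_{q+1}^{\sharp}e\|_0$ collects the linear corrugation terms, the cross terms between distinct corrugations sharing an overlap, the metric-mollification error and the usual second-fundamental-form error of $u_q$, and by the non-parallelism of the $\nu_{j,k}$ together with a non-resonance (stationary-phase) estimate each of these is $\lesssim\delta_q^{1/2}\lambda_q/\lambda_{q+1}+\|g\|_1\lambda_q^{-1}$, hence $\le\delta_{q+2}$ for the chosen $b,\beta$; the bounded multiplicity of the cover guarantees that only finitely many $\Delta u_{j,k}$ are active at any point, so the $C^2$ bound is preserved, and the multiplicity constant is absorbed by taking $\lambda_0$ large.

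\emph{Conclusion and main difficulty.} As $\sum_q\|u_{q+1}-u_q\|_1\lesssim\sum_q\delta_{q+1}^{1/2}<\infty$, the $u_q$ converge in $C^1$ to a limit $v$; $\|g-u_q^{\sharp}e\|_0\to0$ forces $v^{\sharp}e=g$, taking $\lambda_0$ large forces $\|v-u\|_0<\eps$, and $v$ inherits embeddedness. Interpolating $\|u_{q+1}-u_q\|_1\lesssim\delta_{q+1}^{1/2}$ against $\|u_{q+1}-u_q\|_2\lesssim\delta_{q+1}^{1/2}\lambda_{q+1}$ gives $v\in C^{1,\theta}$ whenever $\sum_q\delta_{q+1}^{1/2}\lambda_{q+1}^{\theta}<\infty$, and optimising $\beta,b$ with the number of rounds pinned at the local value $N(n)$ turns this into the threshold $\theta<\tfrac{1}{n^2+n+1}$. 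The main obstacle is precisely the analysis of the parallel round: one has to show that adding the $L$ corrugations $\Delta u_{j,k}$ of round $k$ at once --- rather than sequentially through an atlas-dependent number of sub-steps, which is what forces the loss in \cite{CDS12,Bor65} --- does not degrade the defect estimate. This rests on (i) the non-parallel choice of oscillation directions in overlapping charts, so that every cross term is non-resonant and negligible after one further mollification, and (ii) normalising the defect before decomposing it, which keeps the number of primitive metrics, and hence the number of rounds, equal to $N(n)$ and independent of $\mathcal M$; it is this that produces the optimal global exponent. The remaining points --- propagating strict shortness, handling the $C^1$-only metric by stagewise mollification, and keeping the multiplicity out of the exponent --- are routine. (For $n=2$ one gains further by working with an isothermal rather than a coordinate atlas: this is Theorem~\ref{t:global}.)
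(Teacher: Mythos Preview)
Your approach is genuinely different from the paper's, and the central step does not go through as stated.

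The paper does \emph{not} add corrugations in parallel across charts. Instead it introduces \emph{adapted short immersions} (Definition~\ref{d:adapt}): short immersions that are already isometric on a closed set $\Sigma$ and satisfy the pointwise coupling $|\nabla^2 u|\leq A\rho^{1-1/\theta}$ between the $C^2$-norm and the size $\rho$ of the remaining defect. The proof fixes a triangulation of $\mathcal{M}$ and iterates Proposition~\ref{p:inductive} along the skeleta $\emptyset\subset\Sigma_1\subset\cdots\subset\Sigma_{n+1}=\mathcal{M}$. Each application runs the standard local stage (with the local number $n_*$ of primitive metrics) in one chart at a time---this is possible because Condition~\ref{c:geometric} ensures the relevant region $\widetilde\Sigma_q\setminus S_q$ decomposes into pairwise disjoint pieces, each in a single chart. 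The exponent loss at each skeleton step is governed by an auxiliary parameter $\alpha$ (through $b=1+\tfrac{2n_*\alpha\theta}{1-(2n_*+1)\theta}$), and taking $\alpha\to0$ after all $n{+}1$ steps makes the total loss arbitrarily small. So the optimal exponent is recovered not by parallelisation but by a controlled ``relative'' construction.

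The gap in your plan is the claim that the cross terms between simultaneous corrugations in overlapping charts are $\lesssim\delta_q^{1/2}\lambda_q/\lambda_{q+1}$ in $C^0$ by non-resonance or stationary phase. Write $w_j$ for the round-$k$ corrugation in chart $j$; to leading order $\nabla w_j\approx\partial_t\Gamma_2(a_j,\mu_k\nu_{j,k}\!\cdot\! x)\,\zeta\otimes\nu_{j,k}$ with the common unit normal $\zeta$, so the cross contribution to the pullback metric is
\[
(\nabla w_j)^T\nabla w_{j'}\approx \partial_t\Gamma_2(a_j,\mu_k\nu_{j,k}\!\cdot\! x)\,\partial_t\Gamma_2(a_{j'},\mu_k\nu_{j',k}\!\cdot\! x)\,\nu_{j,k}\otimes\nu_{j',k},
\]
whose $C^0$-norm on $\{\phi_j\phi_{j'}\neq 0\}$ is $\sim a_j a_{j'}\sim\delta_{q+1}$, \emph{the same size as the primitive metrics you are trying to add}. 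Non-parallelism of $\nu_{j,k}$ and $\nu_{j',k}$ makes this term oscillatory, and stationary phase would make its \emph{average} small, but the isometric constraint $u^\sharp e=g$ is pointwise: there is no divergence structure here (unlike the Reynolds stress in the Euler setting) that would let you trade oscillation for smallness in $C^0$. Mollifying at a scale $\ell\gg\mu_k^{-1}$ would indeed suppress the cross term, but it equally suppresses the corrugation itself (it kills $\nabla w_j$); mollifying at $\ell\ll\mu_k^{-1}$ does nothing. Consequently the induction hypothesis $\|g-u_{q+1}^\sharp e\|_0\leq\delta_{q+2}$ fails on overlaps, and without a mechanism to absorb an $O(\delta_{q+1})$ oscillatory error at the next stage (there is none, for the reason just given), the scheme stalls. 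This is precisely the obstruction that forces the loss in \cite{CDS12,Bor65}; the paper circumvents it by the adapted-immersion / skeleta strategy rather than by parallelisation.
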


The main technique for showing such theorems is the iteration scheme known as convex integration. Originally introduced in \cite{Nash54}, convex integration became a widely used and very powerful technique through the seminal works of Gromov \cite{Gr73, Gr86}  for dealing with various (often underdetermined) systems of partial differential equations arising in geometry and topology, especially in association with the h-principle. Whilst Gromov's general framework applies very well to open or ample relations, the isometric embedding problem, written in local coordinates as
$\partial_iu\cdot\partial_ju=g_{ij}$ amounts to a first order partial differential relation where the analytic convergence (i.e.~local) aspects have to be addressed in detail. 

Interest in this problem was to a large extent revived in the last decade by the discovery of strong connections to fluid mechanics and the question of anomalous dissipation in ideal turbulence \cite{DS09,DS13, DS14, BDIS15}. Indeed, by now the parallel story of optimal H\"older exponents for the Euler equations in fluid mechanics is much more complete, Onsager's conjecture, that is, reaching the optimal exponent $1/3$, has recently  been  proved in \cite{Ise16} (see also \cite{BDSV17}). Motivated by these successes, convex integration has become a widely used tool in the PDE community, for example  compressible Euler systems in \cite{CDK15}, Monge-Ampere equation in \cite{LP17}, active scalar equations in \cite{IV15}, Navier-Stokes equations \cite{Buckmaster:2017wfa}. We also refer to the survey \cite{DeLellis:2017dt}. 

Returning to the isometric immersion problem, the key issue leading to a loss of regularity in global results is that Nash's iteration amounts to successively adding primitive metrics which requires working in local coordinates. To extend this successive improvement to a global setting then requires ``gluing'' the various coordinate patches. We are able to handle this problem by working on a fixed triangulation of the manifold and performing the gluing by an induction on the dimension of the skeleta. As such, our technique involves a special class of short immersions, called adapted short immersions, which were originally introduced for the Cauchy problem of the Euler equations \cite{D14,DS17} and used for the isometric extension problem in \cite{HW17} and \cite{CS19}. In a nutshell, adapted short immersions link in a quantitative way the deviation from being isometric to the blow-up of the second derivative. 

Our key construction is Proposition \ref{p:inductive}, which amounts to a kind of sharp quantitative version of what is known as the ``relative h-principle'' - in a sense the associated boundary value problem for dealing with short immersions which are already isometric on a (lower-dimensional) compact set. Our technique is very flexible in the sense that we are able to transfer any local Nash-Kuiper statement to the global setting. Indeed, we expect that any further improvement of the local $1/5$ exponent will be transferable to general compact surfaces by a variant of our Proposition \ref{p:inductive}. 

The paper is organised as follows. In Section \ref{s;preliminary} we fix the notation, recall some useful propositions and lemmata, and introduce adapted short immersions. In Section \ref{s:iterate} we provide versions of the Nash-Kuiper technique of adding primitive metrics - the main difference to previous versions of these results (e.g.~\cite{CDS12,DIS15,CS19}) is that we need to localize and consider compactly supported perturbations - this requires using a different mollification scale. Our main inductive proposition, essentially the main new technical element in this paper, is contained in Section \ref{s:inductive}. Here we focus on the case $n=2$ (Sections \ref{ss:n=2}-\ref{ss:conlude}), since in light of the Weyl problem and rigidity this is the most interesting (and, due to the conformal transformation, technically most involved) case, and we merely indicate in Section \ref{ss:n=3} the minor changes required for the case of general dimension. Finally, in Section \ref{s:proof-of-theorem} we show how to perform the induction over skeleta of a triangulation in our quantitative $C^{1,\theta}$ setting.

\section*{Acknowledgments}
Part of the work was completed when the authors were visiting Hausdorff Research Institute for Mathematics (HIM), they would like to express their gratitude for the hospitality and the wonderful working environment of HIM. 
The authors also gratefully acknowledge the support of the ERC Grant Agreement No. 724298.

\section{Preliminaries}\label{s;preliminary}

\subsection{Notation}

Throughout the paper we consider a compact Riemannian manifold $(\mathcal{M}, g)$ with a $C^1$ metric $g$. We fix a finite atlas $\{\Omega_k\}_k$ of $\mathcal{M}$ with charts $\Omega_k$ and a corresponding partition of unity $\{\phi_k\}$ so that
$$
\sum \phi_k^2=1 \text{ and }\phi_k\in C_c^{\infty}(\Omega_k).
$$
Furthermore, on each $\Omega_k$ we fix a choice of coordinates and in this way identify $\Omega_k$ with a bounded open subset of $\R^2$. We write $g=(G_{ij})$ in any local chart $\Omega_k$ and the same way denote any symmetric 2-tensor $h$ in local charts by $(H_{ij})$. Observe that, since $g$ is a non-degenerate metric and $\mathcal{M}$ is compact, there exists a constant $\gamma_0\geq 1$ such that, in any local chart $\Omega_k$, we have
\begin{equation}\label{e:gamma0}
\frac{1}{\gamma_0}\textrm{Id}\leq G\leq\gamma_0\textrm{Id}.
\end{equation}
Here and in the sequel an inequality of the type $G\leq H$ for symmetric 2-tensors means that $H-G$ is positive definite; equivalently, $G_{ij}\xi_i\xi_j\leq H_{ij}\xi_i\xi_j$ for all $\xi\in \R^2$.
In the paper $M, C$ denote constants independent of any other parameters, while $C(\cdot), C_j(\cdot), j\in\mathbb{N},$ denotes constants depending upon the parameters in the bracket.

As usual, we define the supremum norm of maps $f:\mathcal{M}\to\R^n$ as $\|f\|_0=\sup_{x\in\mathcal{M}}|f(x)|$. The H\"older seminorms on $\R^n$ are defined as
\begin{equation*}
[f]_{m}=\max_{|\beta|=m}\|D^{\beta}f\|_0\,,\quad
[f]_{m+\alpha} = \max_{|\beta|=m}\sup_{x\neq y}\frac{|D^{\beta}f(x)-D^{\beta}f(y)|}{|x-y|^{\alpha}}\, ,
\end{equation*}
where $\beta$ is a multiindex. The H\"older norms are then given by
\begin{equation*}
\|f\|_{m}=\sum_{j=0}^m[f]_j\,,\quad \|f\|_{m+\alpha}=\|f\|_m+[f]_{m+\alpha}.
\end{equation*}
We recall the following interpolation inequalities for these norms:
\begin{equation}\label{e:inerpolation}
\|f\|_{k,\alpha}\leq C\|f\|_{k_1,\alpha_1}^{\lambda}\|f\|_{k_2,\alpha_2}^{1-\lambda},
\end{equation}
where $C$ depends on the various parameters, $0<\lambda<1$ and
$$
k+\alpha=\lambda(k_1+\alpha_1)+(1-\lambda)(k_2+\alpha_2).
$$
We have the following standard estimates on mollifications of H\"older functions (see for instance \cite{CDS12} Lemma 1 for a proof).
\begin{proposition}\label{p:mollification}
Let $\varphi\in C_c^\infty(B_1(0))$ be symmetric, nonnegative and $\int\varphi=1$. For any $r, s\geq0,$ and $0<\alpha\leq1,$ we have for
\begin{itemize}
\item[(1)]$\|f*\varphi_\ell\|_{r+s}\leq C(r, s)\ell^{-s}\|f\|_r$,
\item[(2)]If $0\leq r\leq1,$ $\|f-f*\varphi_\ell\|_r\leq C(r)\ell^{1-r}\|f\|_1$,
\item[(3)] $\|(f_1f_2)*\varphi_\ell-(f_1*\varphi_\ell)(f_2*\varphi_\ell)\|_r\leq C(r, \alpha)\ell^{2\alpha-r}\|f_1\|_\alpha\|f_2\|_\alpha.$
\end{itemize}
\end{proposition}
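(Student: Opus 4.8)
The plan is to deduce all three estimates from a handful of elementary properties of the rescaled kernels $\varphi_\ell(x)=\ell^{-n}\varphi(x/\ell)$; throughout I assume $0<\ell\le1$, as is always the case in the applications, and since all constructions are local, mollification is understood on slightly shrunk charts. First I would record that $D^\mu\varphi_\ell=\ell^{-|\mu|}(D^\mu\varphi)_\ell$, so by scaling $\|D^\mu\varphi_\ell\|_{L^1}=\ell^{-|\mu|}\|D^\mu\varphi\|_{L^1}=:C_{|\mu|}\ell^{-|\mu|}$, and that $\int D^\mu\varphi_\ell=0$ whenever $|\mu|\ge1$, since $\varphi$ has compact support. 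The two workhorses will be Young's inequality $\|h*\psi\|_0\le\|h\|_0\|\psi\|_{L^1}$ together with its Hölder analogue $[h*\psi]_\alpha\le[h]_\alpha\|\psi\|_{L^1}$, and the interpolation inequality \eqref{e:inerpolation}; in particular, for $g\in C^{m+1}$ one has $[g]_{m+\tau}\le C[g]_m^{1-\tau}[g]_{m+1}^{\tau}$ for $\tau\in(0,1)$, obtained by interpolating each $[D^\mu g]_\tau$, $|\mu|=m$, between $\tau=0$ and $\tau=1$.

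For (1), one distributes the derivatives between $f$ and $\varphi_\ell$. Writing $r=a+\sigma$ with $a\ge0$ an integer and $\sigma\in(0,1]$, a derivative of $f*\varphi_\ell$ of order $k\le a$ equals $(D^\mu f)*\varphi_\ell$ and is bounded by $\|f\|_r$ through Young's inequality; a derivative of order $k>a$ forces $k-a\ge1$ derivatives onto the kernel, and there the vanishing moment $\int D^{k-a}\varphi_\ell=0$ allows replacing $D^af$ by $D^af(x-y)-D^af(x)$ inside the convolution, which produces the decisive factor $\ell^{\sigma}$ and hence the bound $[D^af]_\sigma\,\ell^{\sigma}\,\|D^{k-a}\varphi_\ell\|_{L^1}\le C\|f\|_r\,\ell^{\sigma-(k-a)}$. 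Fractional orders are reached by interpolating the Hölder seminorms between consecutive integer orders (or between exponents $\sigma$ and $1$). A direct comparison of exponents then shows that every seminorm entering $\|f*\varphi_\ell\|_{r+s}$ is $\le C\ell^{-s}\|f\|_r$, which is (1).

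For (2), since $\int\varphi_\ell=1$ we have $f(x)-(f*\varphi_\ell)(x)=\int\bigl(f(x)-f(x-y)\bigr)\varphi_\ell(y)\,dy$, so $\|f-f*\varphi_\ell\|_0\le\ell\,[f]_1\le\ell\,\|f\|_1$; at the opposite endpoint, $\|f-f*\varphi_\ell\|_1\le\|f-f*\varphi_\ell\|_0+\|Df\|_0+\|(Df)*\varphi_\ell\|_0\le C\|f\|_1$. Interpolating \eqref{e:inerpolation} between $C^0$ and $C^1$ with weight $\lambda=1-r$ then gives $\|f-f*\varphi_\ell\|_r\le C(\ell\|f\|_1)^{1-r}(C\|f\|_1)^{r}=C\ell^{1-r}\|f\|_1$ for $0<r<1$, and $r=0$ is the supremum estimate itself.

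The crux is (3): the obstacle is that $f_1,f_2$ are merely Hölder continuous and cannot be differentiated, so the commutator $r_\ell:=(f_1f_2)*\varphi_\ell-(f_1*\varphi_\ell)(f_2*\varphi_\ell)$ must be rearranged so that all derivatives fall on the smooth mollifiers while the quadratic ``double-difference'' cancellation — which is what improves the trivial $\|f_1\|_\alpha\|f_2\|_\alpha$ bound to a gain $\ell^{2\alpha}$ — is preserved. The device is the symmetrization of Constantin--E--Titi: writing each convolution as an integral against $\varphi_\ell(x-\cdot\,)$, inserting $1=\int\varphi_\ell(x-b)\,db$ in the first term, relabelling the two integration variables, and averaging the two resulting expressions, one finds
\begin{equation*}
r_\ell(x)=\tfrac12\iint\varphi_\ell(x-a)\,\varphi_\ell(x-b)\,\bigl(f_1(a)-f_1(b)\bigr)\bigl(f_2(a)-f_2(b)\bigr)\,da\,db .
\end{equation*}
Since $|a-b|\le2\ell$ on the support and $\iint\varphi_\ell(x-a)\varphi_\ell(x-b)\,da\,db=1$, this already gives $\|r_\ell\|_0\le C\ell^{2\alpha}\|f_1\|_\alpha\|f_2\|_\alpha$. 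Applying $D^\mu_x$, $|\mu|=m$, to this formula and using the Leibniz rule produces a finite sum of terms $\iint(D^\gamma\varphi_\ell)(x-a)\,(D^{\mu-\gamma}\varphi_\ell)(x-b)\,\bigl(f_1(a)-f_1(b)\bigr)\bigl(f_2(a)-f_2(b)\bigr)\,da\,db$, each of which, by Young's inequality and $|f_i(a)-f_i(b)|\le(2\ell)^{\alpha}[f_i]_\alpha$, is at most $C\|D^\gamma\varphi_\ell\|_{L^1}\|D^{\mu-\gamma}\varphi_\ell\|_{L^1}\,\ell^{2\alpha}\|f_1\|_\alpha\|f_2\|_\alpha\le C\ell^{2\alpha-m}\|f_1\|_\alpha\|f_2\|_\alpha$; hence $[r_\ell]_m\le C\ell^{2\alpha-m}\|f_1\|_\alpha\|f_2\|_\alpha$. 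Differentiating once more and interpolating the Hölder seminorms between orders $m$ and $m+1$ gives $[r_\ell]_{m+\tau}\le C\ell^{2\alpha-m-\tau}\|f_1\|_\alpha\|f_2\|_\alpha$, and summing the finitely many seminorms making up $\|\cdot\|_r$ — each $\le C\ell^{2\alpha-r}\|f_1\|_\alpha\|f_2\|_\alpha$ for $\ell\le1$ — yields (3).
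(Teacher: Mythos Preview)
Your argument is correct. The paper does not actually supply a proof of this proposition; it states the three estimates as standard and refers to Lemma~1 of \cite{CDS12}. Your write-up is therefore more than the paper itself provides, and each of the three parts is handled by the expected mechanism: for (1) splitting derivatives between $f$ and the kernel and exploiting the vanishing zeroth moment of $D^\nu\varphi_\ell$; for (2) the endpoint bounds plus interpolation; and for (3) a Constantin--E--Titi type commutator identity.

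One remark on (3): the identity you use,
\[
r_\ell(x)=\tfrac12\iint\varphi_\ell(x-a)\varphi_\ell(x-b)\bigl(f_1(a)-f_1(b)\bigr)\bigl(f_2(a)-f_2(b)\bigr)\,da\,db,
\]
is a symmetrized double-integral variant. The version more commonly quoted (and the one in \cite{CDS12}) is the single-integral form
\[
r_\ell(x)=\int\bigl(f_1(x-y)-f_1(x)\bigr)\bigl(f_2(x-y)-f_2(x)\bigr)\varphi_\ell(y)\,dy-\bigl(f_1-f_1*\varphi_\ell\bigr)(x)\bigl(f_2-f_2*\varphi_\ell\bigr)(x).
\]
Both give the $\ell^{2\alpha}$ gain, but your formulation has the practical advantage that all $x$-dependence sits in the smooth factors $\varphi_\ell(x-a)\varphi_\ell(x-b)$, so higher $x$-derivatives fall cleanly on the kernel via Leibniz without ever touching the merely $C^\alpha$ functions $f_i$. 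In the single-integral version one has to treat the second term separately (using part~(1) for $f_i*\varphi_\ell$), which is slightly less direct. Either route yields the stated bound.
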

Finally, for functions and tensors on $\mathcal{M}$ we use the given atlas and associated partition of unity to define the H\"older norms: for any $r\geq 0$ we set
$$
[u]_r:=\sum_k[\phi_k^2u_k]_r.
$$
Similarly, we define ``mollification on $\mathcal{M}$"  through the partition of unity. That is to say, for a function $u$ on $\mathcal{M}$ we define
$$
u*\varphi_\ell=\sum_k (u_k\phi_k^2)*\varphi_\ell.
$$
One can also check that the estimates in Proposition \ref{p:mollification} still hold on $\mathcal{M}$ with these definitions. Other properties about H\"older norm can be found in many references such as \cite{DIS15}. Finally, we note that these definitions can be easily extended to symmetric 2-tensors $h$ on $\mathcal{M}$ using the pointwise norm given by the underlying metric $g$:
$$
|h(x)|=\sup_{\xi\in T_x\mathcal{M}, |\xi|_g=1}|h(\xi,\xi)|,
$$
where $|\xi|_g=(\sum_{ij}g_{ij}\xi_i\xi_j)^{1/2}$.
Note that because of \eqref{e:gamma0} this norm is equivalent to the matrix norm of $H(x)$ given by
$$
|H(x)|=\sup_{|\xi|=1}|H_{ij}(x)\xi_i\xi_j|
$$
In particular, given the $C^1$ metric $g$ on $\mathcal{M}$ (with the local representation $g=(G_{ij})$ in local charts $\Omega_k$), we may choose $\gamma_0$ from \eqref{e:gamma0} sufficiently large so that in addition
\begin{equation}\label{e:gamma1}
	\|g\|_1\leq \gamma_0.
\end{equation}

\subsection{Corrugation}

Next, we recall the corrugation functions used in the Nash-Kuiper iteration \cite{Kui55,CDS12}, which we will require in the quantitative form used in Proposition 2.4 in \cite{DIS15}:
\begin{lemma}\label{l:gamma}
There exists $\delta_*>0$ and a smooth function $\Gamma=(\Gamma_1,\Gamma_2)(s, t)$ defined on $[0, \delta_*]\times\R$ satisfying the following properties
\begin{item}
\item[(1)]$\Gamma(s, t)=\Gamma(s, t+2\pi)$ for any $(s, t);$
\item[(2)]$(1+\partial_t\Gamma_1)^2+(\partial_t\Gamma_2)^2=1+s^2;$
\item[(3)] Three estimates about the derivatives of $\Gamma$ hold: for any $k\in\mathbb{N},$
\begin{equation}\label{e:gammaineq}
\begin{split}
|\partial_t^k\Gamma_1(s, \cdot)|\leq C(k)s^2,~~&
|\partial_t^k\Gamma_2(s, \cdot)|\leq C(k)s;\\
|\partial_s\partial_t^k\Gamma_1(s, \cdot)|\leq C(k)s,~~&
|\partial_s\partial_t^k\Gamma_2(s, \cdot)|\leq C(k);\\
|\partial_s^2\Gamma_1(s, \cdot)|\leq C, ~~&|\partial_s^2\Gamma_2(s, \cdot)|\leq C.
\end{split}
\end{equation}
\end{item}
\end{lemma}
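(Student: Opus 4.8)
The plan is to construct $\Gamma$ as the classical \emph{corrugation} attached to the parameter $s$: for each $s$ a $2\pi$-periodic planar curve $t\mapsto\bigl(t+\Gamma_1(s,t),\,\Gamma_2(s,t)\bigr)$ whose speed is identically $\sqrt{1+s^2}$ and which, for $s$ small, is a small perturbation of the straight segment $t\mapsto(t,0)$. The natural starting point is to prescribe the velocity through a phase function,
\[
\bigl(1+\partial_t\Gamma_1(s,t),\,\partial_t\Gamma_2(s,t)\bigr)=\sqrt{1+s^2}\,\bigl(\cos\psi(s,t),\,\sin\psi(s,t)\bigr),
\]
which makes property (2) automatic; it then remains to choose $\psi$ so that it is $2\pi$-periodic in $t$, of size $O(s)$, and so that the right-hand side above has vanishing mean over one period in $t$. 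This last property is exactly what forces $\Gamma(s,\cdot)$ to be $2\pi$-periodic, i.e.\ property (1). I would take the one-parameter ansatz $\psi(s,t)=a(s)\sin t$ and then define
\[
\Gamma_1(s,t):=\int_0^t\Bigl(\sqrt{1+s^2}\,\cos\bigl(a(s)\sin\tau\bigr)-1\Bigr)\,d\tau,\qquad
\Gamma_2(s,t):=\int_0^t\sqrt{1+s^2}\,\sin\bigl(a(s)\sin\tau\bigr)\,d\tau.
\]

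The amplitude $a(s)$ must be chosen so that both integrands above have zero mean over $[0,2\pi]$. For $\Gamma_2$ this is automatic, since $\tau\mapsto\sin(a\sin\tau)$ changes sign under $\tau\mapsto\tau+\pi$. For $\Gamma_1$ the zero-mean condition reads
\[
\sqrt{1+s^2}\,\Phi\bigl(a(s)\bigr)=1,\qquad \Phi(a):=\frac{1}{2\pi}\int_0^{2\pi}\cos(a\sin\tau)\,d\tau,
\]
a scalar equation for $a(s)$. Here $\Phi$ is smooth and \emph{even}, with $\Phi(0)=1$ and $\Phi''(0)=-\tfrac12\neq0$, so a direct appeal to the implicit function theorem fails because $\Phi'(0)=0$. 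Instead I would write $\Phi(a)=\widetilde\Phi(a^2)$ with $\widetilde\Phi$ smooth, $\widetilde\Phi(0)=1$, $\widetilde\Phi'(0)=-\tfrac14$, and solve $\sqrt{1+s^2}\,\widetilde\Phi(b)=1$ for $b$; since only $s^2$ enters, the implicit function theorem provides a smooth solution $b=b(s)$, even in $s$, with $b(0)=0$ and $b(s)=2s^2+O(s^4)$, and then $a(s):=\sqrt{b(s)}$ is smooth on some interval $[0,\delta_*]$ with $a(0)=0$ and $a(s)=\sqrt2\,s+O(s^3)$. With this choice properties (1) and (2) hold by construction and $\Gamma$ is smooth on $[0,\delta_*]\times\R$. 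I regard this passage to the variable $a^2$ as the only genuinely non-routine point of the argument; everything after it is bookkeeping.

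It remains to verify the estimates (3), all of which come from Taylor expansion in the small amplitude $a=a(s)=O(s)$. The two elementary facts I would isolate first are that, for every $k$ and uniformly in $t$,
\[
\partial_t^k\bigl(\sin(a\sin t)\bigr)=O(a),\qquad \partial_t^k\bigl(\cos(a\sin t)-1\bigr)=O(a^2),
\]
the extra power in the cosine case being forced by $\cos$ being even, so that only even powers of $a$ enter its expansion. Together with $\sqrt{1+s^2}-1=O(s^2)$, the boundedness of $a,a',a''$ on $[0,\delta_*]$, and $2\pi$-periodicity in $t$ (which lets one restrict to $t\in[0,2\pi]$), substituting these into the explicit formulas for $\Gamma_1,\Gamma_2$ and their $t$-derivatives gives at once the first line of \eqref{e:gammaineq}. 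For the $s$-derivatives one differentiates those same formulas: a $\partial_s$ either falls on the factor $\sqrt{1+s^2}$, producing a factor $q'(s):=\tfrac{d}{ds}\sqrt{1+s^2}=O(s)$, or falls on the argument $a(s)\sin\tau$, producing $a'(s)=O(1)$ and lowering the effective power of $a$ by one (since $\partial_a[O(a^j)]=O(a^{j-1})$); either way the first $\partial_s$ sends $\Gamma_1$ from $O(s^2)$ to $O(s)$ and $\Gamma_2$ from $O(s)$ to $O(1)$, which is the second line, and a second $\partial_s$ --- now possibly also landing on $q''$ or $a''$, which are again bounded --- gives the third line. The only point that needs care is that the constants $C(k)$ remain uniform as $s\to0$, and this is exactly what the explicit power-of-$a$ structure of the Taylor expansions guarantees.
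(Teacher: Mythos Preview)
Your construction is correct and is exactly the classical corrugation that the paper is quoting (the paper gives no proof of this lemma; it cites Proposition 2.4 of \cite{DIS15}, which in turn follows \cite{Kui55,CDS12}). The ansatz $\psi(s,t)=a(s)\sin t$, the Bessel-function condition $\sqrt{1+s^2}\,J_0(a)=1$ for periodicity of $\Gamma_1$, and the substitution $b=a^2$ to bypass the degeneracy $J_0'(0)=0$ are precisely the standard moves, and your bookkeeping for the estimates in \eqref{e:gammaineq} is sound.
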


\subsection{Decomposition into primitive metrics}

We recall the following decomposition lemma for metrics into a finite sum of primitive metrics (in the form of Lemma 3 in \cite{CDS12}; the original version is Lemma 1 in \cite{Nash54}):

\begin{lemma}\label{l:decompose}
Let $G_0\in \R^{n\times n}$ be a symmetric positive definite matrix. There exists $r>0$, vectors $\xi_1,\dots,\xi_{n_*}\in \mathbb S^{n-1}$ and linear maps $L_i:\R^{n\times n}_{sym}\to\R$ such that $G=\sum_{i=1}^{n_*}L_i(G)\xi_i\otimes \xi_i$
and moreover $L_i(G)\geq r$ for every $i$ and for every $G\in \R^{n\times n}_{sym}$ with $|G-G_0|\leq r$. Here $n_*=\frac{1}{2}n(n+1)$.
\end{lemma}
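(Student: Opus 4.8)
The plan is to reduce the statement to a purely linear‑algebraic existence claim and then establish that claim by a convexity argument. Note first that it suffices to produce unit vectors $\xi_1,\dots,\xi_{n_*}\in\mathbb S^{n-1}$ for which the rank‑one matrices $\xi_1\otimes\xi_1,\dots,\xi_{n_*}\otimes\xi_{n_*}$ form a \emph{basis} of the $n_*$‑dimensional space $\R^{n\times n}_{sym}$ and such that $G_0$ is a strictly positive combination $G_0=\sum_i c_i\,\xi_i\otimes\xi_i$, $c_i>0$. Indeed, letting $L_1,\dots,L_{n_*}$ be the dual coordinate functionals of this basis, the identity $G=\sum_i L_i(G)\,\xi_i\otimes\xi_i$ then holds for \emph{every} symmetric $G$, one has $L_i(G_0)=c_i>0$, and since each $L_i$ is linear --- hence Lipschitz --- on the finite‑dimensional space $\R^{n\times n}_{sym}$, any sufficiently small $r>0$ (e.g. $r\le\min_i c_i/(1+\|L_i\|)$) satisfies $L_i(G)\ge r$ whenever $|G-G_0|\le r$. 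If convenient one may first reduce to $G_0=\mathrm{Id}$ via the linear change of variables $G\mapsto BGB$ with $B=G_0^{-1/2}$, which carries rank‑one matrices to rank‑one matrices and a neighbourhood of $G_0$ to a neighbourhood of $\mathrm{Id}$; this is not essential.

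To produce such a basis I would argue as follows. The matrices $\xi\otimes\xi$, $\xi\in\mathbb S^{n-1}$, span $\R^{n\times n}_{sym}$ --- a symmetric $B$ with $(B\xi)\cdot\xi=0$ for all $\xi$ must vanish (polarise) --- so the convex cone they generate is full‑dimensional; in fact it is exactly the cone of positive semidefinite matrices, whose interior is the open set of positive definite matrices. As $G_0$ is positive definite it lies in the interior of this cone. Fitting a small cube (in fixed linear coordinates on $\R^{n\times n}_{sym}$) centred at $G_0$ inside the cone, each of its finitely many vertices is a conical combination of finitely many of the $\xi\otimes\xi$; the cone generated by those finitely many matrices is convex and contains the cube, hence is a neighbourhood of $G_0$. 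Thus $G_0$ lies in the interior of $\mathrm{cone}\bigl(\xi_1\otimes\xi_1,\dots,\xi_p\otimes\xi_p\bigr)$ for suitable $\xi_1,\dots,\xi_p\in\mathbb S^{n-1}$, with necessarily $p\ge n_*$ and $\{\xi_j\otimes\xi_j\}$ spanning. Being an interior point, $G_0$ admits a representation $G_0=\sum_{j=1}^p c_j\,\xi_j\otimes\xi_j$ with \emph{all} $c_j>0$ (write $G_0-\varepsilon\sum_j\xi_j\otimes\xi_j$ as a nonnegative combination for small $\varepsilon>0$, then add $\varepsilon\sum_j\xi_j\otimes\xi_j$ back).

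It remains to pass from this overdetermined positive representation to a linearly independent --- hence basis --- subfamily without losing positivity, and this is the only real obstacle. The Carathéodory‑type reduction itself is routine: if $p>n_*$ there is a linear relation $\sum_j\lambda_j\,\xi_j\otimes\xi_j=0$ with some $\lambda_j>0$, and replacing $c_j$ by $c_j-t\lambda_j$ while raising $t$ to the least ratio $t^{*}=\min\{c_j/\lambda_j:\lambda_j>0\}$ keeps all coefficients nonnegative and annihilates at least one of them, which may then be discarded; one iterates. The difficulty is that such a step might annihilate several coefficients at once, terminating with fewer than $n_*$ generators (as already happens for $p=3$, $n=2$). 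I would circumvent this by a genericity argument: since $G_0$ lies in the \emph{open} set $\mathrm{int}\,\mathrm{cone}(\xi_1\otimes\xi_1,\dots,\xi_p\otimes\xi_p)$, a sufficiently small perturbation of $\xi_1,\dots,\xi_p$ within $\mathbb S^{n-1}$ keeps $G_0$ interior, while a generic perturbation puts the configuration in general position --- any $n_*$ of the $\xi_j\otimes\xi_j$ linearly independent, and the finitely many minima occurring along the reduction each attained uniquely --- so that the reduction removes exactly one generator per step and halts precisely when $n_*$ linearly independent $\xi_i\otimes\xi_i$ with strictly positive coefficients remain. (Equivalently: a finitely generated cone triangulates into simplicial cones whose generators are linearly independent sub‑collections of the given generators, and after a generic perturbation $G_0$ avoids the walls of such a triangulation, hence lies in the interior of a full‑dimensional cell whose $n_*$ generators are the sought vectors.) This yields the required basis and completes the proof.
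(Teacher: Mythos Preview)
The paper does not supply its own proof of this lemma; it is quoted from Lemma~3 in \cite{CDS12} (going back to Lemma~1 of Nash \cite{Nash54}). Your argument is correct and in the same spirit as those references: reduce to exhibiting a rank-one basis $\{\xi_i\otimes\xi_i\}_{i=1}^{n_*}$ of $\R^{n\times n}_{sym}$ with $G_0$ in the open positive cone it generates, take the $L_i$ to be the dual coordinate functionals, and use continuity. Your handling of the one genuine subtlety --- that a naive Carath\'eodory reduction from $p>n_*$ generators can annihilate several coefficients at once and terminate on a lower-dimensional face --- via a generic perturbation of the $\xi_j$ (equivalently, triangulating the finitely generated cone into simplicial subcones and noting that $G_0$ generically misses the walls) is sound.
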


As in \cite{CDS12} we can argue by compactness to claim that, for any $\gamma>0$ there exists $r_0>0$ such that Lemma \ref{l:decompose} holds with $r=4r_0$ for any $G_0$ satisfying $\frac{1}{\gamma}\textrm{Id}\leq G_0\leq \gamma\textrm{Id}$.

\bigskip

For the case $n=2$ we have $n_*=3$. However, as in \cite{DIS15} we can use conformal coordinates to reduce the sum in Lemma \ref{l:decompose} to only 2 terms. For our purpose we need the existence of conformal coordinates in the following quantitative formn (c.f.~Proposition 2.3 in \cite{DIS15}):

\begin{proposition}\label{p:conformal}
Let $\Omega\subset\R^2$ be a simply connected open bounded set with smooth boundary and $H:\Omega\to \R^{2\times 2}$ a smooth $2\times2$ symmetric positive definite matrix valued function such that, for some $0<\alpha<1$ and $\gamma,M\geq 1$
\begin{equation}\label{e:p-condition}
\frac{1}{\gamma}\textrm{Id}\leq H\leq \gamma\textrm{Id},\quad \|H\|_{C^\alpha(\Omega)}\leq M.
\end{equation}
Then there exists a smooth diffeomorphism $\Phi:\Omega\to\R^2$ and a smooth positive function $\vartheta:\Omega\to\R$ satisfying
\begin{equation}\label{e:isothermal}
H=\vartheta^2(\nabla\Phi_1\otimes\nabla\Phi_1+\nabla\Phi_2\otimes\nabla\Phi_2).
\end{equation}
Moreover the following estimates hold:
\begin{equation*}
\begin{split}
&\det(D\Phi(x))\geq c_0,\quad \vartheta(x)\geq c_0\quad \textrm{ for all }x\in\Omega,\\
&\|\vartheta\|_{j+\alpha}+\|\nabla\Phi\|_{j+\alpha}\leq C_j\|H\|_{j+\alpha}, j\in\mathbb{N},
\end{split}
\end{equation*}
where the constants $c_0>0$, $C_j\geq 1$ depend only on $\alpha, \gamma, M$ and on $\Omega$.
\end{proposition}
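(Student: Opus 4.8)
The plan is to reduce the statement to the classical existence and Schauder theory for the Beltrami equation. Identifying $\R^2$ with $\C$ via $z=x_1+ix_2$, I would introduce the Beltrami coefficient
\begin{equation*}
\mu:=\frac{H_{11}-H_{22}+2iH_{12}}{H_{11}+H_{22}+2\sqrt{\det H}}\,.
\end{equation*}
The uniform ellipticity in \eqref{e:p-condition} forces $|\mu|\leq k_0<1$ on $\Omega$ with $k_0=k_0(\gamma)$, since $(H_{11}-H_{22})^2+4H_{12}^2=(H_{11}+H_{22})^2-4\det H$ while the denominator is bounded below by $2/\gamma$; and since $\mu$ is a real-analytic function of the entries of $H$ on the compact range permitted by \eqref{e:p-condition} (the denominator staying away from $0$), the standard composition estimates in H\"older spaces, together with $\|H\|_\alpha\leq M$ and $\|H\|_{j+\alpha}\geq\|H\|_0\geq1/\gamma$, give $\|\mu\|_{j+\alpha}\leq C_j\|H\|_{j+\alpha}$ with $C_j=C_j(\alpha,\gamma,M)$. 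It then suffices to construct a solution $w=u+iv$ of $\partial_{\bar z}w=\mu\,\partial_z w$ which is a diffeomorphism onto its image: setting $\Phi:=(u,v)$ and $\vartheta^2:=\frac{H_{11}+H_{22}+2\sqrt{\det H}}{4|\partial_z w|^2}$ will give the Proposition.

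Indeed, $\partial_{\bar z}w=\mu\,\partial_z w$ gives $dw=\partial_z w\,(dz+\mu\,d\bar z)$, so $du^2+dv^2=|dw|^2=|\partial_z w|^2\,|dz+\mu\,d\bar z|^2$; and a direct computation, expanding $|dz+\mu\,d\bar z|^2$ as a quadratic form in $dx_1,dx_2$ and inserting the definition of $\mu$, shows
\begin{equation*}
|dz+\mu\,d\bar z|^2=\frac{4}{H_{11}+H_{22}+2\sqrt{\det H}}\;H_{ij}\,dx_i\,dx_j\,,
\end{equation*}
whence $H_{ij}dx_idx_j=\vartheta^2(du^2+dv^2)$, which is \eqref{e:isothermal}. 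Moreover $\det D\Phi=|\partial_z w|^2-|\partial_{\bar z}w|^2=|\partial_z w|^2(1-|\mu|^2)$, so all the lower bounds in the Proposition reduce to a positive lower bound for $|\partial_z w|$ on $\bar\Omega$, together with an upper bound on $|\partial_z w|$ coming from the Schauder estimate.

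For existence and regularity of $w$, I would first extend $H$ to all of $\R^2$, keeping it uniformly elliptic and $C^{j,\alpha}$ with controlled constants --- e.g.~by gluing $H$ to the identity matrix outside a neighbourhood of $\bar\Omega$ through a convex-combination cutoff, which preserves positive-definiteness --- so that the corresponding coefficient $\mu$ is compactly supported, satisfies $|\mu|\leq k_1(\gamma)<1$, and lies in $C^{j,\alpha}(\R^2)$ with $\|\mu\|_{j+\alpha}\leq C_j\|H\|_{j+\alpha}$. One then takes the principal solution $w(z)=z+\mathcal C\bigl[(\mathrm{Id}-\mu S)^{-1}\mu\bigr](z)$ of $\partial_{\bar z}w=\mu\,\partial_z w$ on $\R^2$, with $\mathcal C$ and $S$ the Cauchy and Beurling transforms: this is a global quasiconformal homeomorphism, asymptotic to the identity at infinity and conformal off the support of $\mu$. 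The classical Schauder theory for Beltrami equations with H\"older coefficients gives $w\in C^{j+1,\alpha}_{\mathrm{loc}}$ with $\|\nabla w\|_{j+\alpha}\leq C_j\|\mu\|_{j+\alpha}\leq C_j\|H\|_{j+\alpha}$ on $\bar\Omega$; equivalently, $u$ and $v$ each solve the uniformly elliptic divergence-form equation $\operatorname{div}\bigl(\sqrt{\det H}\,H^{-1}\nabla(\cdot)\bigr)=0$ with $C^{j,\alpha}$ coefficients, to which the usual Schauder estimates apply. (Simple connectivity of $\Omega$ and smoothness of $\partial\Omega$ are what would instead allow one to solve this divergence-form equation directly on $\Omega$ and recover a single-valued conjugate; they are not needed along the route through $\R^2$.)

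The main obstacle is the quantitative non-degeneracy, i.e.~a lower bound $|\partial_z w|\geq c_0>0$ on $\bar\Omega$ depending only on $\alpha,\gamma,M$ and $\Omega$. Pointwise, $\partial_z w$ never vanishes, because the homeomorphic solution $w$ of a Beltrami equation with H\"older coefficient is a $C^{1,\alpha}$-diffeomorphism (the same regularity fact as the Schauder bound above), so $\det Dw=|\partial_z w|^2(1-|\mu|^2)>0$ everywhere, while $\partial_z w\to1$ off the support of $\mu$. To make this uniform over all $H$ satisfying \eqref{e:p-condition}, I would argue by compactness: such $H$ form a precompact family in $C^{\alpha'}(\bar\Omega)$ for every $\alpha'<\alpha$; the associated extended coefficients $\mu$ have uniformly bounded $C^\alpha$ norm and support in a fixed ball; and the solution map $\mu\mapsto w$ is continuous into $C^1_{\mathrm{loc}}$ (by continuous dependence of Beltrami solutions on the coefficient in $L^\infty$, the uniform $C^{1,\alpha}$ bounds upgrading $L^\infty$- to $C^1$-convergence). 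Hence $H\mapsto\min_{\bar\Omega}|\partial_z w|$ is continuous and everywhere positive on a precompact set, so bounded below by a positive constant; combined with $H_{11}+H_{22}+2\sqrt{\det H}\geq2/\gamma$ and the upper bound $\|\partial_z w\|_0\leq C(\alpha,\gamma,M,\Omega)$ from the Schauder estimate, this delivers the claimed bounds $\det D\Phi\geq c_0$ and $\vartheta\geq c_0$ after renaming constants. This compactness / continuous-dependence step is the delicate point: it is exactly what forces the constants $c_0$ and $C_j$ to depend only on the data listed in the Proposition.
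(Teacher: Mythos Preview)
Your approach is essentially the same as the paper's: reduce \eqref{e:isothermal} to the Beltrami equation with the same coefficient $\mu$, extend to a compactly supported coefficient on $\C$, take the principal solution, and invoke Schauder theory. The only notable difference is that the paper simply cites \cite{ACFJK} for the quantitative Jacobian lower bound and Schauder estimates, whereas you supply a compactness/continuous-dependence argument for the former---a valid alternative route to the same conclusion.
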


\begin{proof}
A classical computation (see e.g.~\cite{Chern:1955jf}) shows that \eqref{e:isothermal} is equivalent to the linear Beltrami equation
\begin{equation}\label{e:beltrami}
\partial_{\bar{z}}\Phi=\mu\partial_z\Phi, \text{ with } \mu=\frac{H_{11}-H_{22}+2iH_{12}}{H_{11}+H_{22}+2\sqrt{\det H}}.
\end{equation}
using complex notation with $z=x_1+ix_2$, together with
\begin{equation}\label{e:conformal}
\vartheta^2=\frac{\sqrt{\det H}}{\det D\Phi}.
\end{equation}
Indeed, writing \eqref{e:isothermal} in differential form notation as
$$
H_{11}dx_1^2+2H_{12}dx_1dx_2+H_{22}dx_2^2=\vartheta^2|d\Phi|^2,
$$
we express the left hand side with complex differentials as
$\lambda |dz+\mu d\bar{z}|^2$, and obtain the expression for $\mu$ in \eqref{e:beltrami}. Then \eqref{e:conformal} follows by taking the determinant of both sides of \eqref{e:isothermal}.

Observe that
$$
|\mu|^2=\frac{(\tr H)^2-4\det H}{(\tr H)^2+4\det H+4\sqrt{\det H}\tr H}\leq 1-\frac{4\det H}{(\tr H)^2},
$$
so that, using \eqref{e:p-condition} we deduce $|\mu|^2\leq 1-\tfrac{1}{\gamma^4}$. Extend $\mu$ to $\R^2\simeq\C$ to obtain $\mu\in C^{\infty}_c(\C)$ with
$$
|\mu|^2\leq 1-\tfrac{1}{2\gamma^4},\quad \|\mu\|_{C^\alpha(\C)}\leq C(M).
$$

Define $\Phi$ to be the principal solution of \eqref{e:beltrami} (i.e. normalized by $\Phi(0)=0$ and $\Phi(1)=1$). The lower bound on the Jacobian and the Schauder estimates follow as in \cite{ACFJK}.
\end{proof}

\subsection{Short immersions}

The essence of the Nash-Kuiper construction is to start with a strictly short immersion, and successively reduce the metric error via a sequence of short immersions. Given an immersion $u:\mathcal{M}\to\R^m$ of class $C^1$ we denote by $u^\sharp e$ the pullback of the Euclidean metric $e$ from $\R^m$ onto $\mathcal{M}$. Thus, if $g$ is a metric on $\mathcal{M}$, the map $u$ is isometric if and only if $u^\sharp e=g$. In local coordinates $(u^\sharp e)_{ij}=\partial_i u\cdot\partial_j u$.    We recall the definition of short immersion:

\begin{definition}(\textit{Short immersion})
Following \cite{Nash54,Gr86}, a $C^1$ immersion $u:\mathcal{M}\rightarrow\R^m$ is called \emph{short} if
$$
g-u^\sharp e\geq 0\quad\textrm{ on }\mathcal{M},
$$
and \emph{strictly short} if
$$
g-u^\sharp e>0\quad\textrm{ on }\mathcal{M}.
$$
\end{definition}

\begin{definition}\label{d:strong} (\textit{Strong short immersion})
We call a $C^1$ immersion $u:\mathcal{M}\rightarrow\R^m$ \emph{strongly short} if
$$
g-u^\sharp e=\rho^2(g+h)
$$
with a non-negative function $\rho\in C(\mathcal{M})$ and symmetric tensor $h\in C(\mathcal{M};\R^{n\times n}_{sym})$
satisfying
$$
-\tfrac{1}{2}g\leq h\leq \tfrac{1}{2}g\quad\textrm{ on }\mathcal{M}.
$$
\end{definition}


Finally, we define adapted short immersions analogously to \cite{CS19} (see also \cite{DS17}).
\begin{definition}\label{d:adapt} (\textit{Adapted short immersion})
Given a closed subset $\Sigma\subset\mathcal{M}$ a strongly short immersion $u:\mathcal{M}\rightarrow\R^m$ is called \emph{adapted short with respect to $\Sigma$ with exponent $0<\theta<1$} if $u\in C^{1, \theta}(\mathcal{M})$ is strongly short with
$$
g-u^\sharp e=\rho^2(g+h)
$$
such that $\Sigma=\{\rho=0\}$,
$$
u\in C^2(\mathcal{M}\setminus \Sigma),\quad \rho,h\in C^1(\mathcal{M}\setminus\Sigma)
$$
and there exists a constant $A\geq 1$ such that, in any chart $\Omega_k$
\begin{equation}
\label{e:adapt-final-2}
\begin{split}
&|\nabla^2u(x)|\leq A\rho(x)^{1-\tfrac{1}{\theta}}, \\
&|\nabla\rho(x)|\leq A\rho(x)^{1-\tfrac{1}{\theta}},\\
&|\nabla h(x)|\leq A\rho(x)^{-\tfrac{1}{\theta}}.
\end{split}
\end{equation}
for any $x\in\Omega_k\setminus \Sigma$.
\end{definition}

The motivation for the estimates in \eqref{e:adapt-final-2} is as follows: From the pointwise estimate on $\rho$ we deduce that $\rho^{\frac{1}{\theta}}$ is Lipschitz continuous on $\mathcal{M}$, hence $\rho$ is H\"older continuous with exponent $\theta$. Moreover, since the metric error is of order $\rho$, one may hope to remove this error and obtain an isometric immersion $v$ with $|\nabla u-\nabla v|\sim {\rho}$ whilst keeping $|\nabla^2u-\nabla^2 v|\lesssim \rho^{1-\tfrac{1}{\theta}}$. Consequently we would obtain
$$
\|u-v\|_{1,\theta}\lesssim \|u-v\|_1^{1-\theta}\|u-v\|_2^{\theta}\lesssim 1.
$$
In other words, the estimates \eqref{e:adapt-final-2} are consistent with the goal of removing a metric error of amplitude $\theta$ whilst keeping $\nabla u$ H\"older continuous with exponent $\theta$.

\section{Iteration propositions}\label{s:iterate}

In this section we consider the local perturbation problem, called a ``step" in \cite{Nash54}. Throughout the section we work in an open bounded subset $\Omega\subset\R^n$ and consider immersions $u:\Omega\to\R^{n+1}$. In coordinates the pullback metric via an immersion $u$ is given by $(\partial_iu\cdot\partial_ju)_{i,j=1\dots n}$, for which we shall use the matrix notation $\nabla u^T\nabla u$. Recall that $(\nabla u)_{ki}=\partial_{i}u^k$ is the Jacobian matrix of the map $u$. The propositions below are minor modifications of analogous results in  \cite{CDS12,DIS15,CS19}. The key difference is that we consider compactly supported perturbations of the metric with general primitive metrics of the form $\rho^2(d\Phi)^2$ (equivalently, in matrix notation $\rho^2\nabla\Phi\otimes\nabla\Phi$). To keep the value of initial immersion out of the support of $\rho$ and at the same time handle loss of derivatives, we mollify the metric and the immersion at different length scales.

\subsection{Adding primitive metrics - a single step}

 \begin{proposition}\label{p:step} {\bf[Step]}
Let $u\in C^{2}(\Omega, \R^{n+1})$ be an immersion, and $\varrho, \Phi\in C^{2}(\Omega)$. Assume that
\begin{align}
\frac{1}{\gamma}\textrm{Id}\leq\nabla u^T\nabla u\leq\gamma \textrm{Id},&\quad \frac{1}{M}\leq |\nabla\Phi|\leq M\quad\textrm{ in }\Omega,\label{e:step-1a}\\
\|u\|_{2}&\leq M\delta^{1/2}\nu,\label{e:step-2a}\\
\|\varrho\|_0\leq M\eps^{1/2},\quad \|\varrho\|_1&\leq M\eps^{1/2}\nu, \quad \|\varrho\|_2\leq M\eps^{1/2}\nu\tilde\nu,\label{e:step-3a}\\
\|\nabla\Phi\|_1&\leq M\nu, \quad \|\nabla\Phi\|_2\leq M\nu\tilde\nu \label{e:step-4a}
\end{align}
for some $M, \gamma\geq 1$, $\eps\leq\delta\leq 1$ and $\nu\leq\tilde\nu$.
There exists a constant $c_0=c_0(M, \gamma)$ such that, for any
\begin{equation}\label{e:constraint-on-lambda}
\lambda\geq c_0\frac{\delta^{1/2}}{\eps^{1/2}}\tilde\nu,
\end{equation}
there exists an immersion $v\in C^{2}(\Omega, \R^{n+1})$ such that
\begin{align}
\frac{1}{\bar\gamma}\textrm{Id}\leq\nabla v^T\nabla v&\leq \bar\gamma \textrm{Id}\quad\textrm{ in }\Omega, \label{e:step-0}\\
v&=u\textrm{ on }\Omega\setminus \textrm{supp }\varrho,\label{e:step-1}\\
\|v-u\|_j&\leq C\eps^{1/2}\lambda^{j-1}, j=0, 1, \label{e:step-2}\\
\|v\|_2&\leq C\eps^{1/2}\lambda,\label{e:step-3}\\
\|\nabla v^T\nabla v-(\nabla u^T\nabla u+\varrho^2\nabla\Phi\otimes\nabla\Phi)\|_j&\leq C\eps^{1/2}\delta^{1/2}\nu\lambda^{j-1}, j=0, 1.\label{e:step-4}
\end{align}
Here $\bar\gamma,C\geq 1$ are constants depending only on $M,\gamma$.
\end{proposition}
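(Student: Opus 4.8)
The plan is to add a single corrugation in the direction of $\nabla\Phi$ using the corrugation function $\Gamma$ of Lemma~\ref{l:gamma}, following the standard Nash--Kuiper step but carefully tracking the compact support and the two relevant length scales. Concretely, I would set
\begin{equation*}
v(x) = u(x) + \frac{1}{\lambda}\Big(\Gamma_1\big(\varrho(x)/|\nabla\Phi(x)|, \lambda\Phi(x)\big)\,\zeta_1(x) + \Gamma_2\big(\varrho(x)/|\nabla\Phi(x)|, \lambda\Phi(x)\big)\,\zeta_2(x)\Big),
\end{equation*}
where $\zeta_1 = \nabla\Phi/|\nabla\Phi|^2$ (so that $\nabla u^T\,\partial\zeta_1\cdot(\text{stuff})$ behaves correctly and $\nabla\Phi\cdot\zeta_1 = 1$) and $\zeta_2$ is a suitably normalized unit normal-type vector field completing $\nabla u \cdot \zeta_1$ to an orthogonal-ish frame in the target; the precise algebraic choice of $\zeta_1,\zeta_2$ is dictated by property (2) of Lemma~\ref{l:gamma} so that the leading-order contribution to $\nabla v^T\nabla v - \nabla u^T\nabla u$ is exactly $\varrho^2\,\nabla\Phi\otimes\nabla\Phi$. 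Note that since $\Gamma_1(s,\cdot)$ and $\Gamma_2(s,\cdot)$ vanish when $s=0$ (by the estimates in \eqref{e:gammaineq}), $v = u$ outside $\mathrm{supp}\,\varrho$, giving \eqref{e:step-1} for free; and the constraint $\|\varrho\|_0 \leq M\eps^{1/2}$ with $\eps\leq\delta\leq 1$ together with \eqref{e:step-1a} ensures the argument $s = \varrho/|\nabla\Phi|$ stays in $[0,\delta_*]$ once $c_0$ is large enough (or after a harmless rescaling of $\varrho$), so $\Gamma$ is well-defined along the step.

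Next I would estimate $v - u$ and its derivatives. The $C^0$ bound \eqref{e:step-2} for $j=0$ follows from $|\Gamma_i| \lesssim s \lesssim \|\varrho\|_0 \lesssim \eps^{1/2}$ and the prefactor $1/\lambda$; for $j=1$ one differentiates: the dominant term comes from $\partial_t\Gamma_i \cdot \lambda\nabla\Phi$, which after the $1/\lambda$ prefactor is $O(\eps^{1/2})$ (using $|\partial_t\Gamma_2|\lesssim s$, $|\partial_t\Gamma_1|\lesssim s^2$ and $|\nabla\Phi|\leq M$), while the terms where the derivative hits $s=\varrho/|\nabla\Phi|$ or $\zeta_i$ are lower order by the constraint \eqref{e:constraint-on-lambda} — here is where $\|\varrho\|_1\leq M\eps^{1/2}\nu$, $\|\nabla\Phi\|_1\leq M\nu$, and $\lambda \gtrsim \delta^{1/2}\eps^{-1/2}\tilde\nu \geq \nu$ (since $\nu\leq\tilde\nu\leq\delta^{1/2}\eps^{-1/2}\tilde\nu$) are used. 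The $C^2$ bound \eqref{e:step-3}: the worst term is $\partial_t^2\Gamma_i\cdot\lambda^2|\nabla\Phi|^2$ times $1/\lambda$, giving $O(\eps^{1/2}\lambda)$, plus $\|u\|_2\leq M\delta^{1/2}\nu \leq C\eps^{1/2}\lambda$ by \eqref{e:constraint-on-lambda}, and all mixed terms involving $\|\varrho\|_2, \|\nabla\Phi\|_2\lesssim M(\cdot)\nu\tilde\nu$ are controlled again because $\lambda\gtrsim\delta^{1/2}\eps^{-1/2}\tilde\nu$. The nondegeneracy \eqref{e:step-0} then follows since $\nabla v = \nabla u + O(\eps^{1/2})$ in a controlled way and $\nabla u^T\nabla u \geq \gamma^{-1}\mathrm{Id}$, with $\eps^{1/2}$ small relative to $\gamma$ (absorbing a constant into $c_0$ if necessary, or observing that for $\eps$ not small the statement follows by a trivial rescaling — but actually the cleanest route is to note the leading correction to the metric is $\varrho^2\nabla\Phi\otimes\nabla\Phi \geq 0$ plus an error of size $\eps^{1/2}\delta^{1/2}\nu$, and choose $\bar\gamma$ accordingly).

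The heart of the matter, and the step I expect to be the main obstacle, is the metric error estimate \eqref{e:step-4}. Expanding $\nabla v^T\nabla v$, the terms involving only $\partial_t\Gamma_i$ combine via Lemma~\ref{l:gamma}(2) — which says $(1+\partial_t\Gamma_1)^2 + (\partial_t\Gamma_2)^2 = 1+s^2$ with $s^2 = \varrho^2/|\nabla\Phi|^2$ — to produce precisely $\nabla u^T\nabla u + \varrho^2\,\nabla\Phi\otimes\nabla\Phi$, provided $\zeta_1,\zeta_2$ are chosen so that $|\nabla u\,\zeta_1|$, $|\zeta_2|$, and the relevant cross terms match up; the cross terms $\nabla u^T(\nabla u\,\zeta_1)$ etc. must cancel, which forces the specific frame. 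What remains is a genuine error: contributions where a derivative falls on $s$ or on $\zeta_i$ rather than on $\lambda\Phi$. Each such term carries an extra factor of $\|\varrho\|_1 \sim \eps^{1/2}\nu$ or $\|\nabla\Phi\|_1\sim\nu$ and a compensating $1/\lambda$ (since we "lose" one factor of $\lambda$ from $\partial_t$), so schematically the error is of size $\eps^{1/2}\cdot\nu/\lambda\cdot(\text{leading scale})$; plugging $\lambda\gtrsim\delta^{1/2}\eps^{-1/2}\tilde\nu$ and $\nu\leq\tilde\nu$ gives the claimed $\eps^{1/2}\delta^{1/2}\nu\lambda^{j-1}$ after also accounting for the term $\nu$ versus $\delta^{1/2}\nu$ — one must check this bookkeeping carefully, including the quadratic-in-$\Gamma$ terms and the term $2\nabla u^T\nabla(v-u)$ where the $\partial_t\Gamma$ part is killed by the frame orthogonality but the $\partial_s\Gamma$ and $\partial\zeta$ parts survive as errors. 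For $j=1$ one differentiates the error expression once more, picking up either a $\lambda$ (from $\partial_t$) balanced against the gained smallness, or a $\tilde\nu$ (from second derivatives of $\varrho,\Phi$), again absorbed by \eqref{e:constraint-on-lambda}. The bookkeeping is routine but delicate precisely because three small parameters $\eps,\delta,\nu$ and two frequencies $\nu\leq\tilde\nu\leq\lambda$ interact; the conceptual content is entirely in Lemma~\ref{l:gamma}(2) and the choice of frame, and the rest is a disciplined accounting of which derivative lands where.
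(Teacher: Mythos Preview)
Your overall plan is right --- a single corrugation using Lemma~\ref{l:gamma} with a suitable frame --- and your error-accounting sketch is broadly faithful to what needs to happen. But there is one genuine missing ingredient and one structural confusion.

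The missing ingredient: you build the frame directly from $u$, but since $u$ is only $C^2$, any frame $\zeta_1,\zeta_2\in\R^{n+1}$ constructed from $\nabla u$ is only $C^1$. Then $v=u+\tfrac1\lambda(\Gamma_1\zeta_1+\Gamma_2\zeta_2)$ is not even in $C^2$, let alone satisfying the quantitative bound \eqref{e:step-3}: the term $\tfrac1\lambda\Gamma_2\nabla^2\zeta_2$ simply does not exist. The paper's proof fixes this by first mollifying $u$ at length-scale $\lambda^{-1}$ to obtain a smooth $\tilde u$ with $\|\tilde u-u\|_1\leq C\delta^{1/2}\nu\lambda^{-1}$ and $\|\tilde u\|_3\leq C\delta^{1/2}\nu\lambda$, and builds the frame $\xi,\zeta$ from $\nabla\tilde u$; then $\|(\xi,\zeta)\|_2\leq C\nu\lambda$ and the $C^2$ estimate goes through. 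The price is a new error term $2\,\mathrm{sym}\bigl[(\nabla u-\nabla\tilde u)^T(B+\dots)\bigr]$ in the metric expansion, of size $\delta^{1/2}\nu\lambda^{-1}\cdot\eps^{1/2}$ --- precisely the order claimed in \eqref{e:step-4}. This mollification step (and the attendant splitting of the error into pieces where $\nabla\tilde u^T$ kills certain terms exactly) is essential and you do not mention it.

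The structural confusion: your $\zeta_1=\nabla\Phi/|\nabla\Phi|^2$ is a vector in $\R^n$, not $\R^{n+1}$, so it cannot be added to $u$. The correct tangential vector is the push-forward $\tilde\xi=\nabla\tilde u(\nabla\tilde u^T\nabla\tilde u)^{-1}\nabla\Phi\in\R^{n+1}$, normalized as $\xi=\tilde\xi/|\tilde\xi|^2$, and the normal is $\tilde\zeta=\star(\partial_1\tilde u\wedge\cdots\wedge\partial_n\tilde u)$ normalized by $|\tilde\zeta||\tilde\xi|$. Correspondingly the amplitude fed into $\Gamma$ is not $\varrho/|\nabla\Phi|$ but $\tilde\varrho=|\tilde\xi|\varrho$: the identity in Lemma~\ref{l:gamma}(2) must combine with $\nabla\tilde u^T\xi=|\tilde\xi|^{-2}\nabla\Phi$, $\nabla\tilde u^T\zeta=0$, and $|\xi|^2=|\zeta|^2=|\tilde\xi|^{-2}$ to yield exactly $\varrho^2\nabla\Phi\otimes\nabla\Phi$. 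With your normalization the pullback metric $\nabla u^T\nabla u$ does not drop out of the leading-order computation, so the cancellation you rely on (``combine via Lemma~\ref{l:gamma}(2) to produce precisely $\nabla u^T\nabla u+\varrho^2\nabla\Phi\otimes\nabla\Phi$'') fails unless $\nabla u^T\nabla u=\mathrm{Id}$.
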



\begin{proof}
We proceed as in the proof of Proposition 3.1 in \cite{CS19}. In the following we will denote by $C$ a generic constant whose value may change from line to line, but depends only on $M,\gamma$. 

Fix $\lambda\geq c_0\frac{\delta^{1/2}}{\eps^{1/2}}\tilde\nu\geq\tilde\nu,$
with $c_0$  still to be chosen. Initially we only assume that $c_0\geq 1$ so that in particular we may assume the inequalities
$$
\nu\leq \frac{\delta^{1/2}}{\eps^{1/2}}\nu\leq \lambda.
$$
We regularize $u$ on length-scale $\lambda^{-1}$ to obtain a smooth immersion $\tilde{u}$ satisfying
\begin{equation}\label{e:tildeu-C2}
\|\tilde{u}-u\|_1\leq C(M)\delta^{1/2}\nu\lambda^{-1} ,\quad \|\tilde{u}\|_{2}\leq C(M)\delta^{1/2}\nu,\quad \|\tilde{u}\|_{3}\leq C(M)\delta^{1/2}\nu\lambda.
\end{equation}
Since
$$
\nabla\tilde u^T\nabla\tilde u=\nabla u^T\nabla u-(\nabla u-\nabla\tilde u)^T\nabla u-\nabla\tilde u^T(\nabla u-\nabla\tilde u),
$$
we have
\begin{equation}\label{e:tildeu-C1}
\frac{1}{2\gamma}\textrm{Id}\leq \nabla\tilde u^T\nabla \tilde u\leq 2\gamma \textrm{Id},
\end{equation}
provided $\delta^{1/2}\nu\lambda^{-1}\leq c_2^{-1}$ for some $c_2=c_2(M,\gamma)$. Choosing $c_0\geq c_2$ in the inequality \eqref{e:constraint-on-lambda} constraining $\lambda$ will ensure this. Then it follows that $\nabla\tilde u^T\nabla \tilde u$ is invertible, and hence we can set
\begin{align*}
&\tilde\xi=\nabla \tilde{u}(\nabla \tilde{u}^T\nabla \tilde{u})^{-1}\nabla\Phi,\quad \xi=\frac{\tilde\xi}{|\tilde\xi|^2},\\
&\tilde\zeta=*(\partial_{x_1}\tilde{u}\wedge\partial_{x_2}\tilde{u}\wedge\cdots\wedge\partial_{x_n}\tilde{u}),\quad
\zeta=\frac{\tilde\zeta}{|\tilde\zeta||\tilde\xi|},\\
&\tilde{\varrho}=|\tilde\xi|\varrho.
\end{align*}
Directly from construction we have
\begin{equation}\label{e:xizeta}
\nabla \tilde u^T\xi=\frac{1}{|\tilde\xi|^2}\nabla\Phi,\quad \nabla\tilde u^T\zeta=\nabla\tilde u^T\tilde\zeta=0.
\end{equation}
It follows from \eqref{e:tildeu-C2}-\eqref{e:tildeu-C1} and \eqref{e:step-3a} that
\begin{equation}\label{e:xizeta-est}
\begin{split}
\|(\xi, \zeta)\|_0&\leq C, \\
\|(\xi,\zeta)\|_1&\leq C\nu,\\
\|(\xi,\zeta)\|_2&\leq C\nu(\delta^{1/2}\lambda+\tilde\nu)\leq C\nu\lambda,\\
\end{split}
\end{equation}
and
\begin{equation}\label{e:acm}
\begin{split}
\|\tilde{\varrho}\|_0&\leq C\eps^{1/2},\\
\|\tilde{\varrho}\|_1&\leq C(\|\varrho\|_1\|~|\tilde\xi|~\|_0+\|\varrho\|_0\|~|\tilde\xi|~\|_1)\\
&\leq C\eps^{1/2}\nu, \\
\|\tilde{\varrho}\|_2&\leq C(\|\varrho\|_2\|~|\tilde\xi|~\|_0+\|\varrho\|_0\|~|\tilde\xi|~\|_2)\\
&\leq C(\eps^{1/2}\nu\tilde\nu+\eps^{1/2}\nu\lambda)\\
&\leq C\eps^{1/2}\nu\lambda,
\end{split}
\end{equation}
where we have used that
$\tilde\nu\leq \lambda$.
We define
$$
v=u+\frac{1}{\lambda}\bigl(\Gamma_1(\tilde{\varrho}, \lambda \Phi)\xi+\Gamma_2(\tilde{\varrho}, \lambda\Phi)\zeta\bigr).
$$
Using Lemma \ref{l:gamma} we see that $v=u$ outside $\textrm{supp }\varrho$, so that \eqref{e:step-1} holds.

As in \cite{CS19} we have
\begin{equation}\label{e:interu}
\begin{split}
\|v-u\|_{j}&\leq\frac{1}{\lambda}(\|\Gamma_1\xi\|_{j}+\|\Gamma_2\zeta\|_{j})\\
&\leq\frac{C}{\lambda}(\|\Gamma_1\|_{j}\|\xi\|_{0}+\|\Gamma_1\|_0\|\xi\|_{j}+
\|\Gamma_2\|_{j}\|\zeta\|_0+\|\Gamma_2\|_0\|\zeta\|_{j})
\end{split}
\end{equation}
for $j=0, 1, 2$. Therefore we need to estimate $\|\Gamma_i\|_{j}$ for  $i=1,2$ and $j=0, 1, 2$, where we refer to the $C^j$-norms in $x\in\Omega$ of the composition $x\mapsto \Gamma_i(\tilde{\varrho}(x), \lambda \Phi(x))$.
Using Lemma \ref{l:gamma} and the assumptions \eqref{e:step-1a}-\eqref{e:step-4a} we deduce that
\begin{equation}\label{e:Gammaest1}
\begin{split}
\|\Gamma_1\|_0+\|\partial_t\Gamma_1\|_0&+\|\partial_t^2\Gamma_1\|_0\leq C\|\tilde{\varrho}^2\|_0\leq C\eps,\\
\|\Gamma_1\|_1&\leq \|\partial_t\Gamma_1\|_0\|\nabla\Phi\|_0\lambda+\|\partial_s\Gamma_1\|_0\|\nabla\tilde{\varrho}\|_0\\
&\leq C\eps\lambda+C(M,\gamma)\eps\nu\\
&\leq C\eps\lambda,\\
\|\partial_t\Gamma_1\|_1&\leq \|\partial_t^2\Gamma_1\|_0\|\nabla\Phi\|_0\lambda+\|\partial_s\partial_t\Gamma_1\|_0\|\nabla\tilde{\varrho}\|_0\\
&\leq C\eps\lambda,
\end{split}
\end{equation}
and
\begin{equation}\label{e:Gammaest2}
\begin{split}
\|\Gamma_2\|_0+\|\partial_t\Gamma_2\|_0&+\|\partial_t^2\Gamma_2\|_0\leq C\|\tilde{\varrho}\|_0\leq C\eps^{1/2},\\
\|\Gamma_2\|_1&\leq \|\partial_t\Gamma_2\|_0\|\nabla\Phi\|_0\lambda+\|\partial_s\Gamma_2\|_0\|\nabla\tilde{\varrho}\|_0\\
&\leq C\eps^{1/2}\lambda+C(M, \gamma)\eps^{1/2}\nu\\
&\leq C\eps^{1/2}\lambda,\\
\|\partial_t\Gamma_2\|_1&\leq \|\partial_t^2\Gamma_2\|_0\|\nabla\Phi\|_0\lambda+\|\partial_s\partial_t\Gamma_2\|_0\|\nabla\tilde{\varrho}\|_0\\
&\leq C\eps^{1/2}\lambda,
\end{split}
\end{equation}
where we have used that $\lambda\geq C(M, \gamma)\nu$ - this can be guaranteed by an appropriate choice of $c_0=c_0(M,\gamma)$.
Similarly, we also have
\begin{equation}\label{e:Gammaest12}
\begin{split}
\|\partial_s\Gamma_1\|_0&\leq C\|\tilde{\varrho}\|_0\leq C\eps^{1/2},\\
\|\partial_s\Gamma_2\|_0&\leq C,\\
\|\partial_s\Gamma_1\|_1&\leq \|\partial_t\partial_s\Gamma_1\|_0\|\nabla\Phi\|_0\lambda+\|\partial_s^2\Gamma_1\|_0\|\nabla\tilde{\varrho}\|_0\leq C\eps^{1/2}\lambda,\\
\|\partial_s\Gamma_2\|_1&\leq \|\partial_t\partial_s\Gamma_2\|_0\|\nabla\Phi\|_0\lambda+\|\partial_s^2\Gamma_2\|_0\|\nabla\tilde{\varrho}\|_0\leq C\lambda.
\end{split}
\end{equation}
Thus by \eqref{e:Gammaest1}-\eqref{e:Gammaest12}, we derive
\begin{align*}
\|v-u\|_0&\leq C\eps^{1/2}\lambda^{-1},\\
\|v-u\|_1&\leq C\eps^{1/2}+C\eps^{1/2}\nu\lambda^{-1}\\
&\leq C\eps^{1/2},\\
\|v-u\|_2&\leq C\eps^{1/2}\lambda+C\eps^{1/2}\nu\\
&\leq C\eps^{1/2}\lambda.
\end{align*}
Summarizing, we arrive at \eqref{e:step-2}, and since $\eps^{1/2}\lambda\geq M\delta^{1/2}\nu$, also at \eqref{e:step-3}.

\smallskip

We next derive estimates on the metric error, as in \cite{CS19}. We have
\begin{align*}
\nabla v=&\nabla u+(\partial_t\Gamma_1\xi\otimes\nabla\Phi+\partial_t\Gamma_2\zeta\otimes\nabla\Phi)+
\frac{1}{\lambda}(\Gamma_1\nabla \xi+\Gamma_2\nabla \zeta)\\
&+\frac{1}{\lambda}(\partial_s\Gamma_1\xi\otimes\nabla\tilde{\varrho}+\partial_s\Gamma_2\zeta\otimes\nabla\tilde{\varrho})\\
=&\nabla u+B+E_1+E_2,
\end{align*}
where we have set
$$
B=\partial_t\Gamma_1 \xi\otimes\nabla\Phi+\partial_t\Gamma_2\zeta\otimes\nabla\Phi\,,
$$
$E_1=E_1^{(1)}+E_1^{(2)}$ with
$$
E_1^{(1)}=\frac{1}{\lambda}\left(\Gamma_1\nabla \xi+\frac{\Gamma_2\nabla\tilde\zeta}{|\tilde\zeta||\tilde\xi|}-
\frac{\Gamma_2\tilde\zeta\otimes\nabla|\tilde\zeta|}{|\tilde\zeta|^2|\tilde\xi|}\right),\quad
E_1^{(2)}=-\frac{\Gamma_2}{\lambda}\frac{\tilde\zeta\otimes\nabla|\tilde\xi|}{|\tilde\zeta||\tilde\xi|^2},
$$
where we have calculated that
$$
\nabla \zeta=\frac{\nabla\tilde\zeta}{|\tilde\zeta||\tilde\xi|}-\frac{\tilde\zeta\otimes\nabla|\tilde\zeta|}{|\tilde\zeta|^2|\tilde\xi|}-\frac{\tilde\zeta\otimes\nabla|\tilde\xi|}{|\tilde\zeta||\tilde\xi|^2}.
$$
and $E_2=E_2^{(1)}+E_2^{(2)}$ with
$$
E_2^{(1)}=\frac{1}{\lambda}\partial_s\Gamma_1\xi\otimes\nabla\tilde{\varrho},\quad
E_2^{(2)}=\frac{1}{\lambda}\partial_s\Gamma_2\zeta\otimes\nabla\tilde{\varrho}.
$$
Using \eqref{e:xizeta} and Lemma \ref{l:gamma}, we have
$$	
\nabla \tilde{u}^TB+B^T\nabla\tilde{u}+B^TB=\varrho^2\nabla\Phi\otimes\nabla\Phi
$$
and
$$
\nabla\tilde u^T(E_1^{(2)}+E_2^{(2)})=0.
$$
Therefore we may write, using the notation $\textrm{sym}(H)=(H+H^T)/2$,
\begin{equation}\label{e:metricerror1}
\begin{split}
&\nabla  v^T\nabla v-(\nabla u^T\nabla u+\varrho^2\nabla\Phi\otimes\nabla\Phi)=2\textrm{sym}\left[(\nabla u-\nabla\tilde{u})^T(B+E_1^{(2)}+E_2^{(2)})\right]\\
&\quad +2\textrm{sym}\left[B^T(E_1^{(2)}+E_2^{(2)})\right]+2\textrm{sym}\left[(\nabla u+B)^T(E_1^{(1)}+E_2^{(1)})\right]\\
&\quad +(E_1+E_2)^T(E_1+E_2).
\end{split}
\end{equation}
Using the estimates \eqref{e:xizeta-est}, \eqref{e:acm}, \eqref{e:Gammaest1}, \eqref{e:Gammaest2} and \eqref{e:Gammaest12} we obtain
\begin{align*}
\|B\|_0&\leq C\eps^{1/2},\\
\|E_1^{(1)}\|_0&\leq C\lambda^{-1}(\eps\nu+\eps^{1/2}\delta^{1/2}\nu)\leq C\lambda^{-1}\eps^{1/2}\delta^{1/2}\nu,\\	
\|E_2^{(1)}\|_0&\leq C\lambda^{-1}\eps\nu,\\
\|E_1^{(2)}\|_0+\|E_2^{(2)}\|_0&\leq C\lambda^{-1}\eps^{1/2}\nu.
\end{align*}
Recall that $\eps\leq\delta$.
Using also that $\lambda\geq \nu$, we deduce
\begin{equation}\label{e:metricerror0}
\|\nabla v^T\nabla v-(\nabla u^T\nabla u+\varrho^2\nabla\Phi\otimes\nabla\Phi)\|_0\leq C\eps^{1/2}\delta^{1/2}\lambda^{-1}\nu.
\end{equation}
Similarly, using the Leibniz-rule as in \cite{CS19}, we obtain
\begin{align*}
\|B\|_1&\leq C\eps^{1/2}\lambda,\\
\|E_{1}^{(1)}\|_1&\leq C\eps^{1/2}\delta^{1/2}\nu,\\
\|E_{2}^{(1)}\|_1&\leq C\eps\nu,\\
\|E_1^{(2)}\|_1+\|E_{2}^{(2)}\|_1&\leq C\eps^{1/2}\nu,
\end{align*}
Therefore, as in \cite{CS19}, after differentiating \eqref{e:metricerror1} we deduce
\begin{equation*}
\|\nabla v^T\nabla v-(\nabla u^T\nabla u+\varrho^2\nabla\Phi\otimes\nabla\Phi)\|_{1}\leq C\eps^{1/2}\delta^{1/2}\nu.
\end{equation*}
This verifies \eqref{e:step-4}.

Finally, we verify that $v$ is a bounded  immersion. From \eqref{e:metricerror0} it follows that
\begin{align*}
\|\nabla v^T\nabla v-(\nabla u^T\nabla u+\varrho^2\nabla\Phi\otimes\nabla\Phi)\|_0\leq \frac{1}{2\gamma},
\end{align*}
provided we choose $c_0\geq 2\gamma C(M, \gamma)$. Using \eqref{e:step-3a} and $\eps\leq 1$ we observe
$$
0\leq \varrho^2\nabla\Phi\otimes\nabla\Phi\leq M^2\textrm{Id},
$$ so that
from \eqref{e:step-1a} we readily deduce \eqref{e:step-0}.
This concludes the proof.
\end{proof}


\subsection{Adding general metrics - a stage}

Next, as in Proposition 3.2 from \cite{CS19}, we can apply Proposition \ref{p:step} $N$ times successively to add a term of the form
$$
\sum_{k=1}^N\varrho_k^2\nabla\Phi_k\otimes\nabla\Phi_k
$$
to the metric. Since the proof is exactly the same, here we only recall the statement without proof.
\begin{proposition}\label{p:stage}{\bf [Stage]}
Let $u\in C^2(\Omega, \R^{n+1})$ be an immersion, $\varrho_k, \Phi_k\in C^2(\Omega)$ for $k=1,\cdots, N. $  Assume that
\begin{align*}
\frac{1}{\gamma}\textrm{Id}&\leq\nabla u^T\nabla u\leq\gamma\textrm{Id},\quad \frac{1}{M}\leq|\nabla\Phi_k|\leq M,\, \text{ in } \Omega,\\
&\|u\|_2\leq M\delta^{1/2}\nu,\\
\|\varrho_k\|_0\leq M\eps^{1/2}, \quad &\|\varrho_k\|_1\leq M\eps^{1/2}\nu, \quad \|\varrho_k\|_2\leq M\eps^{1/2}\nu\tilde{\nu},\\
 \|\nabla\Phi_k\|_1&\leq M\nu, \quad \|\nabla\Phi_k\|_2\leq M\nu\tilde\nu,
\end{align*}
for some $M, \gamma>1,$ $\eps\leq\delta\leq1$ and $\nu\leq\tilde{\nu}.$ There exists a constant $c_1=c_1(M, \gamma)$ such that for any $K>c_1\tilde\nu\nu^{-1}$ there exists an immersion $v\in C^{1}(\Omega, \R^{n+1})$ such that
\begin{align*}
v&=u \text{ on } \Omega\setminus\bigcup_{k=1}^N\textrm{supp}\varrho_k,\\
\|v-u\|_j&\leq C\eps^{1/2}(\eps^{-1/2}\delta^{1/2}\nu K)^{j-1}, j=0, 1,\\
\|v\|_2&\leq C\delta^{1/2}\nu K^N\,.
\end{align*}
Furthermore, there exists $\mathcal{E}\in C^1(\Omega, \R^{n\times n}_{sym})$ such that
\begin{align*}
\nabla v^T\nabla v=\nabla u^T\nabla u+\sum_{k=1}^N\varrho_k^2\nabla\Phi_k\otimes\nabla\Phi_k+\mathcal{E} \text{ in }\Omega
\end{align*}
with
\begin{align*}
\|\mathcal{E}\|_0&\leq C\eps K^{-1},\\
\|\mathcal{E}\|_1&\leq C\eps^{1/2}\delta^{1/2}\nu K^{N-1}.
\end{align*}
Here $C\geq 1$ is a constant depending only on $N,M,\gamma$.
\end{proposition}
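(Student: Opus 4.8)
The plan is to apply the one-step Proposition \ref{p:step} exactly $N$ times in succession, adding one primitive metric $\varrho_k^2\nabla\Phi_k\otimes\nabla\Phi_k$ at each step along a geometrically growing sequence of frequencies. Concretely, I would set $u_0=u$, fix the constant $c_0=c_0(M,\gamma)$ from Proposition \ref{p:step}, put $\lambda_0=c_0\eps^{-1/2}\delta^{1/2}\nu$ and $\lambda_k=K\lambda_{k-1}=c_0\eps^{-1/2}\delta^{1/2}\nu K^k$ for $k=1,\dots,N$, so that the hypothesis $\|u\|_2\leq M\delta^{1/2}\nu$ reads $\|u_0\|_2\lesssim\eps^{1/2}\lambda_0$. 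Then, inductively, given $u_{k-1}\in C^2(\Omega,\R^{n+1})$ an immersion agreeing with $u$ outside $\bigcup_{j<k}\supp \varrho_j$, satisfying a uniform ellipticity bound and $\|u_{k-1}\|_2\leq C\eps^{1/2}\lambda_{k-1}$, I would apply Proposition \ref{p:step} with $u\rightsquigarrow u_{k-1}$, $\varrho\rightsquigarrow\varrho_k$, $\Phi\rightsquigarrow\Phi_k$, unchanged $\eps,\delta$, first frequency $\nu_k:=C\eps^{1/2}\delta^{-1/2}\lambda_{k-1}$ (which dominates $\nu$, so the bounds on $\|\varrho_k\|_0,\|\varrho_k\|_1,\|\nabla\Phi_k\|_1$ persist), second frequency $\tilde\nu_k:=\max(\nu_k,\nu\tilde\nu/\nu_k)$ (chosen so the bounds on $\|\varrho_k\|_2,\|\nabla\Phi_k\|_2$ persist), and frequency $\lambda_k$, obtaining $u_k\in C^2(\Omega,\R^{n+1})$ which agrees with $u_{k-1}$ outside $\supp \varrho_k$. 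Finally $v:=u_N$.

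The crux is verifying that the lower bound \eqref{e:constraint-on-lambda}, which at step $k$ reads $\lambda_k\geq c_0\eps^{-1/2}\delta^{1/2}\tilde\nu_k$, holds at every step under the single hypothesis $K>c_1\tilde\nu\nu^{-1}$. For $k=1$ one has $\tilde\nu_1=\tilde\nu$, and $\lambda_1=c_0\eps^{-1/2}\delta^{1/2}\nu K\geq c_0\eps^{-1/2}\delta^{1/2}\tilde\nu$ amounts to $K\geq\tilde\nu/\nu$. For $k\geq 2$ one checks, using $\nu_k\geq\nu$ together with $\nu_k\geq\tilde\nu$ (the latter from $\lambda_{k-1}\geq\lambda_1$ and $K\geq\tilde\nu/\nu$), that $\nu\tilde\nu/\nu_k\leq\nu_k$, so $\tilde\nu_k=\nu_k$ and the constraint becomes $\lambda_k=K\lambda_{k-1}\geq c_0C\lambda_{k-1}$, i.e.\ $K\geq c_0C$; both requirements are absorbed by enlarging $c_1=c_1(M,\gamma,N)$. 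One also checks that the ellipticity constant evolves as $\gamma_k=\bar\gamma(\gamma_{k-1},M)$ and so stays bounded by some $\gamma_N=\gamma_N(\gamma,M,N)$ throughout — hence Proposition \ref{p:step} may legitimately be reapplied — and that $\eps\leq\delta\leq 1$ is preserved since these are never altered; consequently all constants $C$ produced along the way depend only on $M,\gamma,N$.

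It then remains to accumulate the estimates. Setting $\mathcal{E}_k:=\nabla u_k^T\nabla u_k-\nabla u_{k-1}^T\nabla u_{k-1}-\varrho_k^2\nabla\Phi_k\otimes\nabla\Phi_k$ and telescoping gives $\nabla v^T\nabla v=\nabla u^T\nabla u+\sum_{k=1}^N\varrho_k^2\nabla\Phi_k\otimes\nabla\Phi_k+\mathcal{E}$ with $\mathcal{E}:=\sum_k\mathcal{E}_k$. From \eqref{e:step-2}--\eqref{e:step-4} applied at step $k$, together with the definitions of $\nu_k$ and $\lambda_k$, one reads off $\|u_k-u_{k-1}\|_0\lesssim\eps^{1/2}\lambda_k^{-1}$, $\|u_k-u_{k-1}\|_1\lesssim\eps^{1/2}$, $\|u_k\|_2\lesssim\eps^{1/2}\lambda_k$, $\|\mathcal{E}_k\|_0\lesssim\eps K^{-1}$ and $\|\mathcal{E}_k\|_1\lesssim\eps\lambda_{k-1}$; these also close the induction hypothesis on $u_k$. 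Summing over $k$ — the $C^0$-type series being geometric and dominated by its smallest-frequency term $\lambda_1$, the $\|\cdot\|_2$ series and $\|\mathcal{E}\|_1$ series being dominated by their largest-frequency terms $\lambda_N,\lambda_{N-1}$, the $\|\mathcal{E}\|_0$ series having constant terms $O(\eps K^{-1})$ whose total factor $N$ is absorbed into the $N$-dependent constant — yields precisely $\|v-u\|_j\leq C\eps^{1/2}(\eps^{-1/2}\delta^{1/2}\nu K)^{j-1}$ for $j=0,1$, $\|v\|_2\leq C\eps^{1/2}\lambda_N=C\delta^{1/2}\nu K^N$, $\|\mathcal{E}\|_0\leq C\eps K^{-1}$ and $\|\mathcal{E}\|_1\leq C\eps^{1/2}\delta^{1/2}\nu K^{N-1}$, together with $v=u$ outside $\bigcup_k\supp \varrho_k$. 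The only genuine obstacle is the parameter bookkeeping of the second paragraph — arranging that \eqref{e:constraint-on-lambda} is satisfiable at every step with one uniform $K$ and that the errors telescope and sum to the claimed amplitudes rather than picking up an uncontrolled growth factor; no new analytic ingredient is required, and the argument is in fact identical to that of Proposition 3.2 in \cite{CS19}.
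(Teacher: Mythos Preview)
Your proposal is correct and follows exactly the approach the paper intends: the paper itself omits the proof, noting only that one applies Proposition~\ref{p:step} successively $N$ times and that the argument is identical to Proposition~3.2 in \cite{CS19}, which is precisely what you carry out. Your parameter bookkeeping (the geometric frequency ladder $\lambda_k=K\lambda_{k-1}$, the updated $\nu_k,\tilde\nu_k$, and the verification of \eqref{e:constraint-on-lambda} at each step) is sound, and the telescoping/summation of the estimates is handled correctly; the only cosmetic discrepancy is that your $c_1$ picks up an $N$-dependence through the iterated ellipticity constants $\gamma_k$, whereas the statement asserts $c_1=c_1(M,\gamma)$ --- this is harmless and likely a minor imprecision in the statement itself.
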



\bigskip

In the 2-dimensional case we can combine Proposition \ref{p:stage} with Proposition \ref{p:conformal} to add a term  of the form  $\rho^2(G+H)$ to the metric. Let us recall that $G$ is assumed to be a given $C^1$ metric, expressed in local coordinates of some chart, identified with an open bounded subset $\Omega\subset\R^2$.

\begin{corollary}\label{c:metric-1}
Let $G$ be a $C^1$ metric on $\Omega\subset\R^2$ with $\tfrac{1}{\gamma}\textrm{Id}\leq G\leq \gamma\textrm{Id}$ and $\|G\|_1\leq \gamma$ for some $\gamma\geq 1$. Let $u\in C^2(\Omega, \R^{3})$ an immersion, $\rho\in C^1(\Omega)$, $H\in C^1(\Omega;\R^{2\times 2}_{sym})$ such that
\begin{align*}
\frac{1}{\gamma}\textrm{Id}\leq \nabla u^T&\nabla u\leq\gamma \textrm{Id}\,\textrm{ in }\Omega,\\
\|u\|_{2}&\leq \delta^{1/2}\lambda,
\end{align*}
and
\begin{align}
&\|\rho\|_0\leq \delta^{1/2}, \quad \|\rho\|_1\leq \delta^{1/2}\lambda, \label{e:rho-assumption}\\
&\|H\|_0\leq \lambda^{-\alpha}, \quad \|H\|_1\leq \lambda^{1-\alpha}\,,\label{e:H-assumption}
\end{align}
for some $0<\delta<1$, $\alpha>0$ and $\lambda>1$ such that $2\gamma\leq \lambda^\alpha$. Then, for any $\kappa\geq 1$
there exists an immersion $v\in C^2(\Omega; \R^{3})$ and $\mathcal{E}\in C^1(\Omega;\R^{2\times 2}_{sym})$ such that
\begin{equation}\label{e:error-support}
\begin{split}
 \nabla v^T\nabla v=&\nabla u^T\nabla u+\rho^2(G+H)+\mathcal{E}\quad\textrm{ in }\Omega,\\
v=&u\textrm{ on }\supp\rho+B_{\lambda^{-\kappa}}(0)
\end{split}
\end{equation}
with estimates
\begin{align}
\|v-u\|_0&\leq C\delta^{1/2}\lambda^{-\kappa},\label{e:stage-2}\\
\|v-u\|_1&\leq C\delta^{1/2},\label{e:stage-3}\\
\|v\|_2&\leq C\delta^{1/2}\lambda^{2\kappa-1},\label{e:stage-4}
\end{align}
and
\begin{equation}\label{e:stage-5}
\|\mathcal{E}\|_0\leq C\delta\lambda^{1-\kappa}, \quad
\|\mathcal{E}\|_1\leq C\delta\lambda^{\kappa}.
\end{equation}
Here $C\geq 1$ is a constant depending only on $\gamma$ and $\alpha$.
\end{corollary}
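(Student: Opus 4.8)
The plan is to pass to conformal coordinates for $G+H$ and then invoke Proposition~\ref{p:stage} with $N=2$; the only delicate point is the choice of the two mollification scales.

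First I would record the elementary facts about $G+H$. The hypothesis $2\gamma\le\lambda^\alpha$ together with \eqref{e:H-assumption} gives $\|H\|_0\le\tfrac{1}{2\gamma}$, whence $\tfrac{1}{2\gamma}\textrm{Id}\le G+H\le 2\gamma\,\textrm{Id}$, and interpolating between $\|H\|_0\le\lambda^{-\alpha}$ and $\|H\|_1\le\lambda^{1-\alpha}$ via \eqref{e:inerpolation} yields $[H]_\alpha\le C$, so that $\|G+H\|_{C^\alpha(\Omega)}\le\gamma+C=:M$ with $M=M(\gamma,\alpha)$; thus $G+H$ fulfils \eqref{e:p-condition} with $2\gamma$ in place of $\gamma$. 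Next I would mollify: set $\bar\rho:=\rho*\varphi_{\lambda^{-\kappa}}$ and $\bar H:=(G+H)*\varphi_{\ell}$ for a scale $\ell$ to be fixed. Proposition~\ref{p:mollification} together with \eqref{e:rho-assumption} gives $\|\bar\rho\|_0\le\delta^{1/2}$, $\|\bar\rho\|_1\le\delta^{1/2}\lambda$, $\|\bar\rho\|_2\le C\delta^{1/2}\lambda^{\kappa+1}$, $\|\bar\rho-\rho\|_0\le C\delta^{1/2}\lambda^{1-\kappa}$ and $\supp\bar\rho\subset\supp\rho+B_{\lambda^{-\kappa}}(0)$; and, since mollification preserves the pointwise bounds and does not increase the $C^\alpha$-norm, $\tfrac{1}{2\gamma}\textrm{Id}\le\bar H\le 2\gamma\,\textrm{Id}$, $\|\bar H\|_{C^\alpha}\le M$, while $\|\bar H\|_{1+\alpha}\le C\ell^{-\alpha}\|G+H\|_1$, $\|\bar H\|_{2+\alpha}\le C\ell^{-(1+\alpha)}\|G+H\|_1$, and $\|\bar H-(G+H)\|_0\le C\ell\,\|G+H\|_1$, where $\|G+H\|_1\le\gamma+\lambda^{1-\alpha}$.

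I would then apply Proposition~\ref{p:conformal} to $\bar H$ to obtain smooth maps $\Phi_1,\Phi_2$ and a conformal factor $\vartheta\ge c_0$ with $\bar H=\vartheta^2(\nabla\Phi_1\otimes\nabla\Phi_1+\nabla\Phi_2\otimes\nabla\Phi_2)$, together with $\|\vartheta\|_{j+\alpha}+\|\nabla\Phi\|_{j+\alpha}\le C_j\|\bar H\|_{j+\alpha}$ and $\det D\Phi\ge c_0$; in particular $\tfrac{1}{M'}\le|\nabla\Phi_k|\le M'$ for $k=1,2$ and some $M'=M'(\gamma,\alpha,\Omega)$. Setting $\varrho_1=\varrho_2:=\bar\rho\,\vartheta$ one has $\varrho_1^2\nabla\Phi_1\otimes\nabla\Phi_1+\varrho_2^2\nabla\Phi_2\otimes\nabla\Phi_2=\bar\rho^2\bar H$. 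Now I would check the hypotheses of Proposition~\ref{p:stage} with $N=2$, $\eps=\delta$, $\nu\sim\lambda$ and $\tilde\nu\sim\lambda^{\kappa}$: the bounds on $\|u\|_2$ and on $\|\varrho_k\|_0,\|\varrho_k\|_1,\|\nabla\Phi_k\|_1$ follow at once from the above (and from $\|u\|_2\le\delta^{1/2}\lambda$), while the $C^2$-bounds $\|\varrho_k\|_2,\|\nabla\Phi_k\|_2\lesssim\lambda^{\kappa+1}$ fix $\tilde\nu\sim\lambda^{\kappa}$. Choosing the free parameter $K\sim\lambda^{\kappa-1}$, which satisfies $K>c_1\tilde\nu\nu^{-1}$, Proposition~\ref{p:stage} produces $v\in C^2(\Omega;\R^3)$ with $v=u$ outside $\bigcup_k\supp\varrho_k$, the estimates \eqref{e:stage-2}--\eqref{e:stage-4}, and $\nabla v^T\nabla v=\nabla u^T\nabla u+\bar\rho^2\bar H+\mathcal{E}'$ where $\|\mathcal{E}'\|_0\le C\delta\lambda^{1-\kappa}$ and $\|\mathcal{E}'\|_1\le C\delta\lambda^{\kappa}$. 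Finally I would set $\mathcal{E}:=\mathcal{E}'+(\bar\rho^2\bar H-\rho^2(G+H))$, split the correction as $(\bar\rho^2-\rho^2)\bar H+\rho^2(\bar H-(G+H))$, and estimate each piece in $C^0$ by $C\delta\lambda^{1-\kappa}$ and in $C^1$ by $C\delta\lambda^{\kappa}$ using the mollification bounds of the previous paragraph and the Leibniz rule; this gives \eqref{e:stage-5} and the identity in \eqref{e:error-support}, and the support statement follows from $\supp\varrho_k\subset\supp\bar\rho\subset\supp\rho+B_{\lambda^{-\kappa}}(0)$.

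The main obstacle is precisely the choice of the scale $\ell$ for mollifying $G+H$. It must be small enough that $\|\rho^2(\bar H-(G+H))\|_0\lesssim\delta\ell\,\lambda^{1-\alpha}$ stays below the target $\delta\lambda^{1-\kappa}$, and at the same time large enough that $\|\bar H\|_{2+\alpha}\lesssim\ell^{-(1+\alpha)}\lambda^{1-\alpha}$ — and hence $\|\nabla\Phi_k\|_2$, $\|\vartheta\|_2$ and the top-order part of $\|\varrho_k\|_2$ — does not exceed the ceiling $\sim\lambda^{\kappa+1}=\nu\tilde\nu$, which is itself forced by the target $\|v\|_2\lesssim\delta^{1/2}\lambda^{2\kappa-1}$ in \eqref{e:stage-4} and by the constraint $K>c_1\tilde\nu\nu^{-1}$ in Proposition~\ref{p:stage} (together with $\|v-u\|_0\lesssim\delta^{1/2}\lambda^{-\kappa}$, these pin down $K\sim\lambda^{\kappa-1}$ and hence $\tilde\nu\lesssim\nu K\sim\lambda^{\kappa}$). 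Balancing these two requirements, and carefully tracking how the error terms generated at the two different mollification scales recombine, is the only non-routine part of the argument; everything else is bookkeeping with Hölder norms via Proposition~\ref{p:mollification} and the interpolation inequality \eqref{e:inerpolation}.
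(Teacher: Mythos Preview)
Your overall strategy---mollify, pass to conformal coordinates, apply Proposition~\ref{p:stage} with $N=2$, $\nu\sim\lambda$, $\tilde\nu\sim\lambda^\kappa$, $K\sim\lambda^{\kappa-1}$---is exactly right, and matches the paper. The gap is in the part you yourself flag as the obstacle: with the Schauder estimates of Proposition~\ref{p:conformal} taken at the \emph{fixed} exponent $\alpha$, no choice of $\ell$ works for all $\kappa\ge1$.

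Concretely, your two stated constraints are $\ell\le\lambda^{\alpha-\kappa}$ (from the $C^0$ mollification error) and $\ell\ge\lambda^{-(\kappa+\alpha)/(1+\alpha)}$ (from $\|\bar H\|_{2+\alpha}\le\lambda^{\kappa+1}$); these are compatible only when $\kappa\le 2+\alpha$. Worse, you have glossed over the constraint coming from $\|\nabla\Phi_k\|_1\le M\nu=M\lambda$: at exponent $\alpha$ the Schauder estimate only gives $\|\nabla\Phi\|_1\le\|\nabla\Phi\|_{1+\alpha}\le C\|\bar H\|_{1+\alpha}\le C\ell^{-\alpha}\lambda^{1-\alpha}$, which forces $\ell\gtrsim\lambda^{-1}$ and hence $\kappa\le 1+\alpha$. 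So the ``balancing'' you describe is impossible for large $\kappa$, and the proposal is incomplete as stated.

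The fix the paper uses is not a second mollification scale but a second H\"older exponent: mollify \emph{everything} ($\rho$, $G$, $H$) at the single scale $\ell=\lambda^{-\kappa}$, and apply Proposition~\ref{p:conformal} with a \emph{reduced} exponent $\alpha'\le\alpha/\kappa$. Then $\|\tilde G+\tilde H\|_{\alpha'}\le C(\gamma+\lambda^{\alpha'-\alpha})\le C\gamma$ stays bounded, while $\|\tilde G+\tilde H\|_{j+\alpha'}\le C\lambda^{1-\alpha}\ell^{1-j-\alpha'}$ combined with $\kappa\alpha'\le\alpha$ gives precisely $\|\nabla\Phi\|_j,\|\tilde\vartheta\|_j\le C\lambda\ell^{1-j}$ for $j=1,2$. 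This is what makes $\nu=\lambda$, $\tilde\nu=\lambda^\kappa$ admissible in Proposition~\ref{p:stage} for every $\kappa\ge1$. Once you make this change, the rest of your argument (support, error splitting, Leibniz-rule bookkeeping) goes through essentially as you wrote it.
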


\begin{proof}
We start by mollifying $\rho$, $G$ and $H$ at length-scale $\ell$, with
\begin{equation}\label{e:ell}
\ell=\lambda^{-\kappa}.
\end{equation}
In this way we obtain $\tilde \rho$, $\tilde G$ and $\tilde H$ such that, using \eqref{e:rho-assumption}, \eqref{e:H-assumption} and Proposition \ref{p:mollification},
\begin{align*}
\|\tilde \rho\|_0\leq \delta^{1/2}, \quad & \|\tilde H\|_{0}\leq \lambda^{-\alpha},\\
\|\tilde \rho\|_j\leq C_j\delta^{1/2}\lambda\ell^{1-j},\quad & \|\tilde H\|_{j}\leq C_j\lambda^{1-\alpha}\ell^{1-j},\\
\|\tilde \rho-\rho\|_0\leq C\delta^{1/2}\lambda\ell,\quad  &\|\tilde H-H\|_0\leq C\lambda^{1-\alpha}\ell\,\\
\|\tilde G-G\|_0\leq C(\gamma)\ell,\quad &\|\tilde G\|_{j}\leq C_j(\gamma)\ell^{1-j}
\end{align*}
for all $j\geq 1$.
In particular using  \eqref{e:inerpolation}  we deduce, for any $0<\alpha'<1$
\begin{equation*}
\begin{split}	
\|\tilde G+\tilde H\|_{\alpha'}&\leq \|\tilde G\|_{\alpha'}+\|\tilde H\|_{\alpha'}\\
&\leq C(\gamma+\lambda^{\alpha'-\alpha})\\
&\leq C\gamma
\end{split}
\end{equation*}
provided $\alpha'\leq \alpha$.
Since positive definite matrices form a convex set, we have $\tfrac{1}{\gamma}\textrm{Id}\leq \tilde G\leq \gamma\textrm{Id}$. Therefore
$$
\tilde G+\tilde H\geq (\tfrac{1}{\gamma}-\lambda^{-\alpha})\textrm{Id}\geq \tfrac{1}{2\gamma}\textrm{Id},
$$
since we assumed $\lambda^{\alpha}\geq 2\gamma$ 
Let $\mathbf{h}=\rho^2(G+H)$ and $\tilde{\mathbf{h}}=\tilde \rho^2(\tilde G+\tilde H)$. 
We apply Proposition \ref{p:conformal} to obtain
$$
\tilde G+\tilde H=\tilde\vartheta^2(\nabla\Phi_1\otimes\nabla\Phi_1+\nabla\Phi_2\otimes\nabla\Phi_2),
$$
with $\tilde\vartheta, \Phi=(\Phi_1, \Phi_2)$ satisfying
\begin{equation*}
\tilde{\vartheta}\geq C(\gamma), \,\det(D\Phi)\geq C(\gamma),
\end{equation*}
and, using once again \eqref{e:inerpolation},
\begin{align*}
\|\tilde{\vartheta}\|_{j+\alpha'}+\|\nabla\Phi\|_{j+\alpha'}\leq C_j(\alpha',\gamma)\lambda^{1-\alpha}\ell^{1-j-\alpha'}.
\end{align*}
Choosing $\alpha'\leq \frac{\alpha}{\kappa}$ and recalling \eqref{e:ell} we then deduce 
\begin{equation}\label{e:phi-estimate}
\begin{split}
\frac{1}{C(\gamma)}\leq|\nabla\Phi_1|&, \,|\nabla\Phi_2|\leq C(\gamma);\\
\|\nabla\Phi\|_{j}&\leq C_j(\gamma)\lambda\ell^{1-j}\,,\\
\|\tilde\vartheta\|_{j}&\leq C_j(\gamma)\lambda\ell^{1-j}\,
\end{split}
\end{equation}
for any $j\geq 1$.
Define
$\vartheta=\tilde \vartheta \tilde \rho$
so that
\begin{equation}\label{e:decompstage}
\tilde{\mathbf{h}}=\vartheta^2(\nabla\Phi_1\otimes\nabla\Phi_1+\nabla\Phi_2\otimes\nabla\Phi_2)
\end{equation}
and $\vartheta$ satisfies
\begin{equation}\label{e:vartheta}
\begin{split}
\|\vartheta\|_0&\leq C(\gamma)\delta^{1/2},\\
\|\vartheta\|_1&\leq C(\gamma)\delta^{1/2}\lambda,\\
\|\vartheta\|_2&\leq  C(\gamma)\delta^{1/2}\lambda\ell^{-1}\,.
\end{split}
\end{equation}
We remark that
$$
\textrm{dist}\left(\textrm{supp }\tilde{\mathbf{h}},\textrm{supp }\rho\right)\leq \ell=\lambda^{-\kappa}.
$$
Then from \eqref{e:phi-estimate}-\eqref{e:vartheta}, in order to add $\tilde{\mathbf{h}}$, we  apply Proposition \ref{p:stage} with
$$
\eps=\delta, \quad \nu=\lambda, \quad \tilde\nu=\ell^{-1}.
$$
In particular we fix
$$
K=c_1(\gamma)\tilde\nu\nu^{-1}=c_1(\gamma)\lambda^{\kappa-1}
$$
with $c_1$ from Proposition \ref{p:stage}. We then conclude the existence of an
immersion $v\in C^2(\Omega;\R^{3})$ such that
\begin{align*}
v=u&\textrm{ on }\Omega\setminus \textrm{supp }\tilde{\mathbf{h}}\\
\|v-u\|_0&\leq C(\gamma)\delta^{1/2}\lambda^{-\kappa}\,,\\
\|v-u\|_1&\leq C\delta^{1/2},\\
\|v\|_2&\leq C\delta^{1/2}\lambda^{2\kappa-1}.
\end{align*}
This shows \eqref{e:error-support}-\eqref{e:stage-4}. Moreover, the new metric error
\begin{align*}
\mathcal{E}&=\nabla v^T\nabla v-(\nabla u^T\nabla u+\rho(G+H))
\end{align*}
satisfies
\begin{align*}
\|\mathcal{E}\|_0&\leq C\delta \lambda^{1-\kappa}+\|\mathbf{h}-\tilde{\mathbf{h}}\|_0\\
\|\mathcal{E}\|_1&\leq C\delta\lambda^{\kappa}+\|\mathbf{h}-\tilde{\mathbf{h}}\|_1\,.
\end{align*}
Finally, using \eqref{e:ell}, we estimate
\begin{equation*}
\begin{split}	
\|\tilde {\mathbf{h}}-\mathbf{h}\|_0&\leq \|(G+H)(\tilde \rho^2-\rho^2)\|_0+\|\tilde \rho^2(\tilde G-G+\tilde H-H)\|_0\\
&\leq C\delta\lambda^{1-\kappa},\\
\|\tilde {\mathbf{h}}-\mathbf{h}\|_1&\leq C\delta\lambda\,.
\end{split}
\end{equation*}
Thus we conclude
\eqref{e:stage-5}.
\end{proof}


For general dimension $n\geq 3$, we can utilize Lemma \ref{l:decompose} and follow the same argument as in Corollary \ref{c:metric-1} to get a similar result, which is a slightly modified version of Corollary 3.1 in \cite{CS19}. Recall that $r_0=\frac{1}{4}r$, where $r$ is the radius in Lemma \ref{l:decompose}.

\begin{corollary}\label{c:metric-2}
Let $G$ be a $C^1$ metric on $\Omega\subset\R^n$ with $\tfrac{1}{\gamma}\textrm{Id}\leq G\leq \gamma\textrm{Id}$ and $\|G\|_1\leq \gamma$ for some $\gamma\geq 1$. Assume also that $\textrm{osc}_\Omega G<r_0$. Let $u\in C^2(\Omega, \R^{m})$ an immersion, $\rho\in C^1(\Omega)$, $H\in C^1(\Omega;\R^{n\times n}_{sym})$ such that
\begin{align*}
\frac{1}{\gamma}\textrm{Id}\leq \nabla u^T&\nabla u\leq\gamma \textrm{Id}\,\textrm{ in }\Omega,\\
\|u\|_{2}&\leq \delta^{1/2}\lambda,  \\
\|\rho\|_0\leq \delta^{1/2}, &\quad \|\rho\|_1\leq \delta^{1/2}\lambda, \\ 
\|H\|_0\leq \lambda^{-\alpha}, &\quad \|H\|_1\leq \lambda^{1-\alpha}\,,
\end{align*}
for some $0<\delta<1$, $\alpha>0$ and $\lambda>1$ such that $2\gamma\leq r_0\lambda^\alpha$. Then, for any $\kappa\geq 1$
there exists an immersion $v\in C^2(\Omega; \R^{n+1})$ and $\mathcal{E}\in C^1(\Omega;\R^{n\times n}_{sym})$ such that
\begin{equation*}
\begin{split}
 \nabla v^T\nabla v=&\nabla u^T\nabla u+\rho^2(G+H)+\mathcal{E}\quad\textrm{ in }\Omega,\\
v=&u\textrm{ on }\supp\rho+B_{\lambda^{-\kappa}}(0)
\end{split}
\end{equation*}
with estimates
\begin{align}
\|v-u\|_0&\leq C\delta^{1/2}\lambda^{-\kappa}, \label{e:2stage-2}\\
\|v-u\|_1&\leq C\delta^{1/2}, \label{e:2stage-3}\\
\|v\|_2&\leq C\delta^{1/2}\lambda^{n_*(\kappa-1)+1}, \label{e:2stage-4}
\end{align}
and
\begin{equation}\label{e:2stage-5}
\|\mathcal{E}\|_0\leq C\delta\lambda^{1-\kappa}, \quad
\|\mathcal{E}\|_1\leq C\delta\lambda^{(n_*-1)(\kappa-1)+1}.
\end{equation}
Here $C\geq 1$ is a constant depending only on $\gamma$.
\end{corollary}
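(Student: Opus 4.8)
The plan is to follow the proof of Corollary~\ref{c:metric-1} essentially verbatim, replacing the conformal reduction (Proposition~\ref{p:conformal}) by the decomposition into primitive metrics of Lemma~\ref{l:decompose}; for our purposes the latter is in fact simpler, since it produces \emph{constant} directions $\xi_i$. First I would mollify $\rho$, $G$ and $H$ at length scale $\ell=\lambda^{-\kappa}$, obtaining smooth $\tilde\rho,\tilde G,\tilde H$ with (by Proposition~\ref{p:mollification}) $\|\tilde\rho\|_j\le C_j\delta^{1/2}\lambda\ell^{1-j}$, $\|\tilde H\|_j\le C_j\lambda^{1-\alpha}\ell^{1-j}$, $\|\tilde G\|_j\le C_j(\gamma)\ell^{1-j}$ for $j\ge1$, together with $\|\tilde\rho-\rho\|_0\le C\delta^{1/2}\lambda\ell$, $\|\tilde H-H\|_0\le C\lambda^{1-\alpha}\ell$ and $\|\tilde G-G\|_0\le C(\gamma)\ell$. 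Writing $\mathbf h=\rho^2(G+H)$ and $\tilde{\mathbf h}=\tilde\rho^2(\tilde G+\tilde H)$, the strategy is to add $\tilde{\mathbf h}$ with Proposition~\ref{p:stage} and push $\mathbf h-\tilde{\mathbf h}$ into the final error $\mathcal E$.

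The one point where the hypotheses of the corollary are really used is to guarantee that $\tilde G+\tilde H$ stays in the region in which Lemma~\ref{l:decompose} applies with uniformly positive coefficients. Fix $G_0:=\tilde G(x_0)$ for some $x_0\in\Omega$; by convexity of the positive-definite cone $\tfrac1\gamma\textrm{Id}\le G_0\le\gamma\textrm{Id}$, and since mollification does not increase the oscillation, $|\tilde G(x)+\tilde H(x)-G_0|\le\textrm{osc}_\Omega G+\|H\|_0<r_0+\tfrac{r_0}{2}<4r_0$ for every $x\in\Omega$, using $\textrm{osc}_\Omega G<r_0$ and $\|H\|_0\le\lambda^{-\alpha}\le r_0/(2\gamma)$ (the latter from $2\gamma\le r_0\lambda^\alpha$). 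Hence the compactness version of Lemma~\ref{l:decompose} (with $r=4r_0$) yields unit vectors $\xi_1,\dots,\xi_{n_*}$ and linear maps $L_i$ with $\|L_i\|\le C(\gamma)$ such that $\tilde G+\tilde H=\sum_{i=1}^{n_*}a_i\,\xi_i\otimes\xi_i$ with $a_i:=L_i(\tilde G+\tilde H)\in C^\infty(\Omega)$ and $a_i\ge4r_0>0$. Because the $\xi_i$ are constant I set $\Phi_i(x):=\xi_i\cdot x$, so $|\nabla\Phi_i|\equiv1$ and all derivatives of $\nabla\Phi_i$ vanish, and $\varrho_i:=\tilde\rho\sqrt{a_i}$, so that $\tilde{\mathbf h}=\sum_i\varrho_i^2\,\nabla\Phi_i\otimes\nabla\Phi_i$. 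Using $a_i\ge4r_0$ and $\|a_i\|_j\le C(\gamma)\|\tilde G+\tilde H\|_j$ one checks $\|\sqrt{a_i}\|_0\le C(\gamma)$, $\|\sqrt{a_i}\|_1\le C(\gamma)\lambda$ and $\|\sqrt{a_i}\|_2\le C(\gamma)\lambda\ell^{-1}$, whence (using repeatedly $\lambda\le\ell^{-1}=\lambda^\kappa$) $\|\varrho_i\|_0\le C(\gamma)\delta^{1/2}$, $\|\varrho_i\|_1\le C(\gamma)\delta^{1/2}\lambda$, $\|\varrho_i\|_2\le C(\gamma)\delta^{1/2}\lambda\ell^{-1}$.

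It then remains to apply Proposition~\ref{p:stage} with $N=n_*$, $\eps=\delta$, $\nu=\lambda$, $\tilde\nu=\ell^{-1}$, a constant $M=M(\gamma)$ large enough (its hypotheses are exactly the bounds just derived, together with $|\nabla\Phi_i|\equiv1$), and $K=c_1(\gamma)\lambda^{\kappa-1}$ (permissible since $\tilde\nu\nu^{-1}=\lambda^{\kappa-1}$). This produces an immersion $v$ with $v=u$ off $\bigcup_i\supp\varrho_i\subset\supp\rho+B_{\lambda^{-\kappa}}(0)$, with $\|v-u\|_0\le C\delta^{1/2}\lambda^{-\kappa}$, $\|v-u\|_1\le C\delta^{1/2}$, $\|v\|_2\le C\delta^{1/2}\lambda K^{n_*}=C\delta^{1/2}\lambda^{n_*(\kappa-1)+1}$, and with $\nabla v^T\nabla v=\nabla u^T\nabla u+\tilde{\mathbf h}+\mathcal E_0$, $\|\mathcal E_0\|_0\le C\delta K^{-1}=C\delta\lambda^{1-\kappa}$, $\|\mathcal E_0\|_1\le C\delta\lambda K^{n_*-1}=C\delta\lambda^{(n_*-1)(\kappa-1)+1}$. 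Setting $\mathcal E:=\mathcal E_0+(\tilde{\mathbf h}-\mathbf h)$ and estimating $\mathbf h-\tilde{\mathbf h}$ exactly as in Corollary~\ref{c:metric-1}, via $\tilde{\mathbf h}-\mathbf h=(G+H)(\tilde\rho^2-\rho^2)+\tilde\rho^2\bigl((\tilde G-G)+(\tilde H-H)\bigr)$ together with $\alpha>0$, $\ell=\lambda^{-\kappa}$ and $\|\tilde\rho\|_0^2\le\delta$, gives $\|\tilde{\mathbf h}-\mathbf h\|_0\le C\delta\lambda^{1-\kappa}$ and $\|\tilde{\mathbf h}-\mathbf h\|_1\le C\delta\lambda$, which combine with the bounds on $\mathcal E_0$ to yield \eqref{e:2stage-5} after noting $\lambda\le\lambda^{(n_*-1)(\kappa-1)+1}$. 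The only genuinely new step compared with the disc case is the second paragraph: the hypotheses $\textrm{osc}_\Omega G<r_0$ and $2\gamma\le r_0\lambda^\alpha$ are precisely what keeps the mollified metric inside the ball on which Lemma~\ref{l:decompose} decomposes uniformly with positive coefficients; everything else is two-scale bookkeeping, and is strictly easier than in the conformal case because $\Phi_i$ is linear, so its higher derivatives cost nothing.
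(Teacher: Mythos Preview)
Your proof is correct and follows exactly the approach the paper indicates: replace the conformal reduction of Proposition~\ref{p:conformal} by the primitive-metric decomposition of Lemma~\ref{l:decompose} and otherwise repeat the argument of Corollary~\ref{c:metric-1}. The paper in fact gives no detailed proof beyond pointing to this substitution, and your write-up fills in precisely the expected bookkeeping, including the key observation that $\textrm{osc}_\Omega G<r_0$ together with $2\gamma\le r_0\lambda^\alpha$ keeps $\tilde G+\tilde H$ inside the ball where Lemma~\ref{l:decompose} applies with uniformly positive coefficients.
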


We point out in passing that in Corollary \ref{c:metric-1} the constant $C$ depended on the exponent $\alpha>0$ through the Schauder estimates in applying Proposition \ref{p:conformal}. Since this step is not available in the higher dimensional setting, eventually the constant $C$ does not depend on $\alpha$. 

\section{Inductive construction of adapted short immersion}\label{s:inductive}

Let $u$ be an adapted short immersion with respect to some compact set $S\subset \mathcal{M}$ with exponent $\theta$ (c.f.~Definition \ref{d:adapt}). In particular
$$
g-u^\sharp e=\rho^2(g+h),
$$
with $S=\{\rho=0\}$. Furthermore, let $\Sigma\supset S$ be another compact subset. Our aim in this section is to show that, under certain conditions, we can modify $u$ to obtain another adapted short immersion with respect to the larger compact set $\Sigma$ with some exponent $\theta'<\theta$. The case we will be interested in is where $\Sigma$ and $S$ are the skeleta of a given triangulation of $\mathcal{M}$ of consecutive dimension - see Section \ref{s:proof-of-theorem} below. In particular both are a finite union of $C^1$ submanifolds on $\mathcal{M}$. In this case the following geometric condition is satisfied: 
\begin{condition}\label{c:geometric}
There exists a geometric constant $\bar{r}>0$ such that
for any $\delta>0$ the set 
$$
\biggl\{x\in\mathcal{M}:\,\textrm{dist}(x,S)\geq \delta\textrm{ and }\textrm{dist}(x,\Sigma)\leq \bar{r}\delta\biggr\}
$$
is contained in a pairwise disjoint union of open sets, each contained in a single chart 
$\Omega_k$.
\end{condition}

We point out explicitly that the special cases where $S=\emptyset$ or $\Sigma=\mathcal{M}$ are admitted in these considerations. 

In the following we focus on the 2-dimensional case, and will briefly point out differences in the analogous argument for $n\geq 3$ in Section \ref{ss:n=3} below.

\subsection{The case $n=2$}\label{ss:n=2}

\begin{proposition}\label{p:inductive}
Let $0<\theta<1/5$ and $0<\alpha<1$. There exists a constant $A_0=A_0(\theta,\alpha)\geq 1$, such that the following holds:

Let $S\subset\Sigma$ be compact subsets of $\mathcal{M}$ satisfying Condition \ref{c:geometric}.  
Let $u\in C^{1, \theta}(\mathcal{ M})$ be an adapted short immersion with $g-u^\sharp e=\rho^2(g+h)$ such that $\rho\leq 1/4$ in $\mathcal{M}$, $S=\{\rho=0\}$, and
in any chart $\Omega_k$
\begin{equation}\label{e:inductive-u-rho-assumption}
\begin{split}
|\nabla^2 u|\leq A\rho^{1-\frac{1}{\theta}}, &\quad |\nabla\rho|\leq A\rho^{1-\frac{1}{\theta}}, \\
|h|\leq A^{-\alpha\theta}\rho^\alpha,&\quad |\nabla h|\leq A^{1-\alpha\theta}\rho^{\alpha-\frac{1}{\theta}},
\end{split}
\end{equation}
for some $A\geq A_0$. Then there exists a new adapted short immersion $\bar{u}\in C^{1, \theta'}(\mathcal{M})$ with $g-\bar{u}^\sharp e=\bar{\rho}^2(g+\bar{h})$ with respect to $\Sigma\supset S$ satisfying $\bar{\rho}\leq\rho$, $\|\bar{u}-u\|_0\leq A^{-1/2}$,  and $\bar{u}=u$ on $S$. Moreover, in any chart $\Omega_k$ 
\begin{equation}\label{e:inductive-conclusion}
\begin{split}
|\nabla^2 \bar{u}|\leq A'\bar{\rho}^{1-\frac{1}{\theta'}}, &\quad |\nabla\bar{\rho}|\leq A'\bar{\rho}^{1-\frac{1}{\theta'}}, \\
|\bar{h}|\leq (A')^{-\alpha'\theta'}\bar{\rho}^{\alpha'},&\quad |\nabla \bar{h}|\leq (A')^{1-\alpha'\theta'}\bar{\rho}^{\alpha'-\frac{1}{\theta'}},
\end{split}
\end{equation}
with
$$A'=A^{b^2}, \quad \theta'=\frac{\theta}{b^2}, \quad  \alpha'=\frac{\alpha}{2b^2},$$
where
\begin{equation}\label{e:theta-b-condition}
b=1+\frac{4\alpha\theta}{1-5\theta}.
\end{equation}

\end{proposition}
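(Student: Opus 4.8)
The proof is a quantitative, chart‑localized Nash–Kuiper iteration that ``fills in'' the collar between $S$ and $\Sigma$. I would first fix the output parameters $b$ as in \eqref{e:theta-b-condition}, $\theta'=\theta/b^2$, $\alpha'=\alpha/(2b^2)$, $A'=A^{b^2}$, choose $A_0$ large enough that the base frequency $\lambda_0=\lambda_0(A)$ (with $\lambda_0\to\infty$ as $A\to\infty$) makes all the largeness/smallness conditions below hold, and set up a super‑exponential schedule $\lambda_q=\lambda_0^{\,b^q}$ together with amplitude parameters $\delta_q$ tied to the target profile (roughly $\delta_q^{1/2}\sim\lambda_q^{-\theta'}$, matching $\bar\rho\sim\textrm{dist}(\cdot,\Sigma)^{\theta'}$ near $\Sigma$) and a gluing exponent $\kappa>1$ close to $1$. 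The construction produces a sequence of immersions $v_q$, $v_0=u$, with $g-v_q^\sharp e=\rho_q^2(g+h_q)$, where the zero sets $\{\rho_q=0\}$ grow from $S$ towards $\Sigma$, $\rho_q\downarrow\bar\rho$, $h_q\to\bar h$, and $\bar u:=\lim_q v_q$.

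\textbf{Geometric localization.} At stage $q$ the region that still needs modification is the set of points close to $\Sigma$ but not too close to $S$; invoking Condition \ref{c:geometric} with $\delta\sim\lambda_q^{-1}$ (restricted also to $\textrm{dist}(\cdot,S)\gtrsim\delta$, which holds automatically once $\rho$ is already tiny near $S$), this active region is a pairwise disjoint union of open sets, each contained in a single chart $\Omega_k$ identified with an open subset of $\R^2$. On each such piece I would argue in Euclidean coordinates with $G=g|_{\Omega_k}$ playing the role of the given $C^1$ metric in Corollary \ref{c:metric-1}, treating the pieces independently and then reassembling; compatibility across chart overlaps is automatic because the perturbations at a given stage have disjoint supports. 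A dyadic family of cutoffs $\chi_q$, equal to $1$ (up to a correction of size $\sim\lambda_q^{(1-\kappa)/2}$) on a shrinking neighborhood of $\Sigma$ and supported in the active region, localizes the step and prescribes how much of $\rho$ is peeled off at stage $q$. Since nothing is changed away from the collar and $\chi_q\rho_q$ vanishes on $S$ (as $\rho$ does), we get $\bar u=u$ on $S$ and $\{\bar\rho=0\}\supset S$; that $\{\bar\rho=0\}=\Sigma$ exactly follows because the supports of the $\chi_q$ exhaust a punctured neighborhood of $\Sigma$, $\rho_q\to0$ on $\Sigma$, while any $x\notin\Sigma$ lies in only finitely many of the nested neighborhoods and hence keeps $\bar\rho(x)=\rho_q(x)>0$ from some stage on.

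\textbf{The inductive step.} Given $v_q$ with the bounds of \eqref{e:inductive-conclusion} at level $q$, on each chart‑piece of the active region I apply Corollary \ref{c:metric-1} with $G=g$, $H=h_q$, density $\chi_q\rho_q$ in place of $\rho$, $\lambda=\lambda_q$, $\delta=\delta_q$, $\alpha=\alpha'\theta'$ and the chosen $\kappa$; the hypotheses \eqref{e:rho-assumption}, \eqref{e:H-assumption} are precisely the level‑$q$ bounds carried by $v_q$ (in particular $\|H\|_1\le\lambda^{1-\alpha}$ is exactly the $|\nabla h_q|$ bound, which is why the extra exponent $\alpha$ must be tracked through the iteration). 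This gives $v_{q+1}=v_q$ off $\supp(\chi_q\rho_q)+B_{\lambda_q^{-\kappa}}(0)$ with $\nabla v_{q+1}^T\nabla v_{q+1}=\nabla v_q^T\nabla v_q+(\chi_q\rho_q)^2(g+h_q)+\mathcal E_q$; defining $\rho_{q+1},h_{q+1}$ by $\rho_{q+1}^2(g+h_{q+1})=(1-\chi_q^2)\rho_q^2(g+h_q)-\mathcal E_q$, the bounds $\|\mathcal E_q\|_0\le C\delta_q\lambda_q^{1-\kappa}$, $\|\mathcal E_q\|_1\le C\delta_q\lambda_q^{\kappa}$ from \eqref{e:stage-5} together with $1-\chi_q^2\gtrsim\lambda_q^{1-\kappa}$ keep the right‑hand side a nonnegative tensor and preserve $-\tfrac12 g\le h_{q+1}\le\tfrac12 g$. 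Summing the displacement bound \eqref{e:stage-2} over the super‑exponentially thin stages yields $\|\bar u-u\|_0\le A^{-1/2}$ once $A_0$ is large, and the second‑derivative bound \eqref{e:stage-4} together with $\|\mathcal E_q\|$ from \eqref{e:stage-5}, summed tail‑wise, give the level‑$(q{+}1)$ bounds in \eqref{e:inductive-conclusion}: on $\{\bar\rho\sim\delta_q^{1/2}\}$ one has $\bar u=v_q$, $\bar\rho=\rho_q$, so $|\nabla^2\bar u|\lesssim\delta_q^{1/2}\lambda_q^{2\kappa-1}$, $|\nabla\bar\rho|$ is read off from the tapering of $\chi_q$, and $|\bar h|\lesssim\sum_{q'\ge q}\|\mathcal E_{q'}\|_0/\delta_{q'}$, $|\nabla\bar h|\lesssim\sum_{q'\ge q}\|\mathcal E_{q'}\|_1/\delta_{q'}$.

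\textbf{Main obstacle.} The heart of the matter is closing these estimates simultaneously, i.e.\ choosing $\kappa$ and the schedules $\lambda_q,\delta_q$ and the geometry of the shells so that all the resulting inequalities between powers of $\lambda_q$ hold: the corrugation must fit in the room available next to $\Sigma$, the residual error $\mathcal E_q$ folded into $\bar h$ must decay near $\Sigma$ fast enough that $|\bar h|\le (A')^{-\alpha'\theta'}\bar\rho^{\alpha'}$ (this is what forces $\kappa>1$), and at the same time $|\nabla^2\bar u|$ must not exceed $A'\bar\rho^{\,1-1/\theta'}$ (which pushes $\kappa$ the other way), all uniformly over the infinitely many chart‑localized stages. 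Balancing these against the loss inherent to Corollary \ref{c:metric-1} — where, crucially, Proposition \ref{p:conformal} reduces the step in $2$D to only two primitive metrics rather than three — is exactly what pins the exponents to $\theta'=\theta/b^2$, $\alpha'=\alpha/(2b^2)$, $A'=A^{b^2}$ with $b=1+\tfrac{4\alpha\theta}{1-5\theta}$, and the system is consistent precisely when $1-5\theta>0$. A secondary technical point is engineering the cutoffs $\chi_q$ so that $\bar\rho$ has the smoothed profile $\textrm{dist}(\cdot,\Sigma)^{\theta'}$ near $\Sigma$, ensuring $\nabla\bar\rho$ does not blow up where $\chi_q$ reaches $1$, and carrying out the entirely analogous argument in dimension $n\ge 3$ with Corollary \ref{c:metric-2} in place of Corollary \ref{c:metric-1}.
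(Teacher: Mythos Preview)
Your overall architecture matches the paper's: iterate Corollary \ref{c:metric-1} on shrinking collars around $\Sigma$ with cutoffs $\chi_q$ and a super-exponential schedule $\lambda_{q+1}=\lambda_q^b$, then pass to the limit. But there is a genuine gap in the bookkeeping that prevents the estimates from closing.

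\textbf{The missing two-tier bound.} You propose to carry only the target bounds \eqref{e:inductive-conclusion} (i.e.\ with exponent $\theta'=\theta/b^2$) through the iteration and feed them as input to Corollary \ref{c:metric-1}. This fails on the overlap region where stage $q{-}1$ has already acted: there $\rho_q\sim\delta_{q+1}^{1/2}$, and the $b^2$-bound $|\nabla^2 v_q|\le A^{b^2}\rho_q^{\,1-b^2/\theta}$ forces the frequency $\lambda$ in Corollary \ref{c:metric-1} to be of order $\lambda_{q+1}^{b^2}=\lambda_{q+3}$ rather than $\lambda_{q+2}$; the output $\|v_{q+1}\|_2\le C\delta^{1/2}\lambda^{2\kappa-1}$ is then too large and the iteration does not return to the same exponents. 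The paper resolves this by tracking \emph{two} tiers simultaneously: a global bound (called $(3)_q$) with exponent $b^2$, which is the eventual conclusion, and a strictly sharper bound (called $(4)_q$) with exponent $b$ on the freshly processed set $\{\rho_0>\delta_{q+1}^{1/2}\}\cap\Sigma_q$. On the active region one combines the original hypothesis \eqref{e:inductive-u-rho-assumption} where $\rho_q=\rho_0$ with the exponent-$b$ bound where the construction has already modified things, yielding $|\nabla^2 u_q|\le\delta_{q+1}^{1/2}\lambda_{q+2}$ as input. A single application of Corollary \ref{c:metric-1} with $\lambda=\lambda_{q+2}$ and the specific choice $\kappa=1+\tfrac{2\theta}{b}(b-1+\alpha)$ then both re-establishes the sharp exponent-$b$ bound on $\{\chi_q=1\}$ and produces the weaker exponent-$b^2$ bound elsewhere. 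The algebraic identity $5\theta(b-1)+4\alpha\theta=b-1$, equivalent to \eqref{e:theta-b-condition}, is precisely what makes both tiers close.

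\textbf{Residual and cutoffs.} A related difference: rather than keeping $1-\chi_q^2\gtrsim\lambda_q^{1-\kappa}$ as you suggest (which would only give $|h_{q+1}|=O(1)$, not the required $|h_{q+1}|\lesssim\rho_{q+1}^{\alpha'}$), the paper lets $\chi_q$ reach $1$ on an inner collar and explicitly adds back a residual, defining $\rho_{q+1}^2=(1-\chi_q^2)\rho_q^2+\delta_{q+2}\chi_q^2$ and adding only $\tilde\rho_q^2(g+\tilde h_q)$ with $\tilde\rho_q=\chi_q\sqrt{\rho_q^2-\delta_{q+2}}$. Thus $\rho_{q+1}=\delta_{q+2}^{1/2}$ exactly on $\{\chi_q=1\}$, which is what makes the sharp exponent-$b$ bound there clean. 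Moreover the cutoff $\chi_q$ depends not only on $\textrm{dist}(\cdot,\Sigma)$ but also on $\rho$ itself through a factor $\phi(\rho/\delta_{q+2}^{1/2})$; this, together with a separate lemma on the resulting sequence $\rho_q$, pins down $\tfrac32\delta_{q+2}^{1/2}\le\rho_q\le 2\delta_{q+1}^{1/2}$ on the active region---control that a purely distance-based cutoff cannot provide, since $\rho$ need not be small near $\Sigma\setminus S$.
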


The proof of Proposition \ref{p:inductive} is divided into five sections.

\subsection{Parameters}

First, recall from \eqref{e:gamma0} and \eqref{e:gamma1} that for any local chart $\Omega_k$ the coordinate expression $G=(G_{ij})$ of $g$ satisfies
$$
\frac{1}{\gamma_0}\textrm{Id}\leq G\leq \gamma_0\textrm{Id},\quad \|G\|_{C^1(\Omega_k)}\leq \gamma_0,
$$
and let $\gamma:=4\gamma_0$. Since $\rho<1/4$ and $u$ is an adapted (and hence strong) immersion by assumption, using Definition \ref{d:strong} we deduce
$$
\tfrac{1}{4}G\leq (1-\tfrac32\rho^2)g\leq \nabla u^T\nabla u\leq (1-\tfrac{1}{2}\rho^2)G\leq G 
$$
so that 
$$
\frac{1}{\gamma}\textrm{Id}\leq \nabla u^T\nabla u\leq \gamma\textrm{Id}.
$$
Next, set 
\begin{equation}\label{e:deltabar}
\delta_1:=\max_{x\in\mathcal{M}}\rho^2,
\end{equation}
and for $q\geq 1$
\begin{align*}
\lambda_q=A\delta_q^{-\frac{1}{2\theta}},\quad \lambda_{q+1}=\lambda_q^b.
\end{align*}
By ensuring that $A$ is sufficiently large (depending on $\theta,\alpha$), we may then assume without loss of generality that
\begin{equation}\label{e:ordering}
\delta_{q+1}\leq \frac{1}{4}\delta_q,\quad \lambda_{q+1}\geq 2\lambda_q.
\end{equation}
\subsection{Definition of cut-off functions}\label{s:cut-off}

We first decompose $\mathcal{M}$ with respect to $\Sigma$ and $S$. Let  
$$
r_q=A^{-1}\delta_{q+1}^{\tfrac{1}{2\theta}}=\lambda_{q+1}^{-1},
$$
and define for $q=0, 1,2,\dots$
\begin{align*}
\Sigma_q&=\{x:\,\textrm{dist}(x,\Sigma)<r_*r_q\},\\
\widetilde\Sigma_q&=\{x:\,\textrm{dist}(x,\Sigma)<\tilde r_*r_q\},\\
S_q&=\{x:\,\textrm{dist}(x,S)<r_{**}r_q\},
\end{align*}
where $r_*<\tilde r_{*}$ and $r_{**}$ are geometric constants to be chosen as follows:
First of all, choose $r_{**}>0$ so that
\begin{equation}\label{e:choiceofr**}
\rho(x)>\tfrac{3}{2}\delta_{q+2}^{1/2}\quad\textrm{ implies }\quad x\notin S_{q+1}.
\end{equation}
Indeed, recall from \eqref{e:adapt-final-2} and the discussion following it, that $\rho$ is $\theta$-H\"older continuous and hence 
$\rho(x)\leq A^{\theta}\textrm{dist}(x,S)^\theta$. In particular, for any $x\in S_{q+1},$ one has $\rho(x)\leq r_{**}^\theta\delta_{q+2}^{1/2}$. Thus such a choice of $r_{**}$ is possible. 

Then, set $\tilde r_*=\bar{r}r_{**}$, where $\bar{r}>0$ is the geometric constant in Condition \ref{c:geometric}; this ensures that, for any $q\in\N$
\begin{equation}\label{e:singlecharts}
\begin{split}
	\widetilde\Sigma_q\setminus S_q&\textrm{ \emph{is contained in a pairwise disjoint union of open sets,}}\\
	&\textrm{\emph{each contained in a single chart }}\Omega_k	
\end{split}
\end{equation}
Finally, choose $r_*<\tilde r_*$ so that $\tfrac{1}{2}\tilde r_*<r_*<\tilde r_*$, hence (in light of \eqref{e:ordering})
$$
\widetilde\Sigma_{q+1}\subset \Sigma_q\subset \widetilde\Sigma_q\quad\textrm{ for all }q.
$$  
Next, we fix cut-off functions $\phi, \tilde\phi, \psi, \tilde\psi\in C^\infty(0,\infty)$ with $\phi,\tilde\phi$ monotonic increasing, $\psi,\tilde\psi$ monotonic decreasing such that
$$
\phi(s),\tilde\phi(s)=\begin{cases} 1&s\geq 2\\ 0&s\leq \tfrac32\end{cases}\,,\quad  
\psi(s),\tilde\psi(s)=\begin{cases} 1&s\leq r_*\\ 0&s\geq \tilde r_*\end{cases}\,,
$$
and in addition
$$
\tilde\phi(s)=1 \textrm{ on }\supp\phi,\quad \tilde\psi(s)=1 \textrm{ on }\supp\psi.
$$
Set
\begin{align*}
\chi_q(x)=\phi\left(\frac{\rho(x)}{\delta_{q+2}^{1/2}}\right)\psi\left(\frac{\textrm{dist}(x, \Sigma)}{r_{q+1}}\right),\quad
\tilde\chi_q(x)=\tilde\phi\left(\frac{\rho(x)}{\delta_{q+2}^{1/2}}\right)\tilde\psi\left(\frac{\textrm{dist}(x, \Sigma)}{r_{q+1}}\right).
\end{align*}
Using \eqref{e:inductive-u-rho-assumption} and the choice of $r_q$, $r_*$, $\tilde r_*$ and the cut-off functions we easily deduce
 \begin{align}
 |\nabla\chi_q|,\, |\nabla\tilde\chi_q|&\leq CA\delta_{q+2}^{-\frac{1}{2\theta}}=C\lambda_{q+2},\label{e:gradient-chi}\\
\textrm{dist}(\supp\chi_q, \partial\supp\tilde\chi_q)&\geq C^{-1}A^{-1}\delta_{q+2}^{\frac{1}{2\theta}}=C^{-1}\lambda^{-1}_{q+2}.\label{e:chi-q-support}
 \end{align}
for some constant $C$ depending on $r_*,\tilde r_*$, and moreover
\begin{equation}\label{e:chi-define}
\begin{split}
\{x\in\Sigma_{q+1}|\rho(x)>2\delta_{q+2}^{1/2}\}&\subset\{x\in\mathcal{M}:\,\chi_q(x)=1\},\\
\supp\chi_q&\subset \{x\in\mathcal{M}:\,\tilde{\chi}_q(x)=1\},\\
\supp\tilde{\chi}_q&	\subset\{x\in\widetilde\Sigma_{q+1}:\,\rho(x)>\tfrac{3}{2}\delta_{q+2}^{1/2}\}.
\end{split}
\end{equation}
From \eqref{e:choiceofr**}  and \eqref{e:singlecharts} we then deduce that $\supp\tilde\chi_q$ is contained in a pairwise disjoint union of open sets, each contained in a single chart $\Omega_k$. 

\subsection{Construction of error size sequence}
Our strategy of proving Proposition \ref{p:inductive} is constructing a sequence of adapted short immersions, which is based on the above cut-off functions and error size. Thus we first define  the sequence of error size $\{\rho_q\}.$
Set $\rho_0=\rho$ and define $\rho_{q}$ for $q=1,2,\dots$ inductively as
\begin{equation}\label{e:rho-q+1}
\rho_{q+1}^2=\rho_q^2(1-\chi_q^2)+\delta_{q+2}\chi_q^2.
\end{equation}

\begin{lemma}\label{l:rho}
Let $\{\rho_q\}$ be defined in \eqref{e:rho-q+1}. Then for any $q=0,1,\dots$ 
\begin{enumerate}
	\item[(i)] On $\supp\tilde\chi_q$ it holds
	\begin{equation*}
	\tfrac{3}{2}\delta_{q+2}^{1/2}\leq\rho_q\leq2\delta_{q+1}^{1/2}.
	\end{equation*}
	\item[(ii)] For every $x$ we have $\rho_{q+1}(x)\leq\rho_q(x)$.
	\item[(iii)] If $\rho_q(x)\leq\delta_{q+1}^{1/2}$, then $x\not\in\bigcup_{j=0}^{q-1}\supp\tilde\chi_j$ and consequently $\rho_q(x)=\rho(x)$.
	\item[(iv)] If $\rho_q(x)\geq \delta_{q+1}^{1/2}$, then either $\chi_q(x)=1$ or $x\notin \Sigma_{q+1}$.
	\end{enumerate}
\end{lemma}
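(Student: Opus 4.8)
\textbf{Proof plan for Lemma \ref{l:rho}.}

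The plan is to verify the four assertions essentially in the order stated, since (i) is the workhorse that feeds into the later parts. For (i), I would argue on $\supp\tilde\chi_q$ by an induction on $q$. On this set, from the third inclusion in \eqref{e:chi-define} we know $\rho(x)=\rho_0(x)>\tfrac32\delta_{q+2}^{1/2}$, and from \eqref{e:choiceofr**} combined with the fact that $\supp\tilde\chi_q\subset\widetilde\Sigma_{q+1}\setminus S_{q+1}$ we get a matching lower bound on $\rho$ at level $q+1$. The point then is to propagate this to $\rho_q$: I would show inductively that for every $j<q$, either $\chi_j\equiv 0$ on a neighbourhood of $x$ (so $\rho_{j+1}=\rho_j$ there by \eqref{e:rho-q+1}) or else $x$ lies where the two building blocks $\rho_j^2(1-\chi_j^2)$ and $\delta_{j+2}\chi_j^2$ are both controlled by $\delta_{j+1}$ from above and $\delta_{j+2}$ from below. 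Concretely, \eqref{e:rho-q+1} gives $\rho_{j+1}^2$ as a convex combination of $\rho_j^2$ and $\delta_{j+2}$, so monotonicity of the $\delta_q$ from \eqref{e:ordering} immediately yields $\delta_{j+2}\le\rho_{j+1}^2$ whenever $\delta_{j+2}\le\rho_j^2$, and an upper bound $\rho_{j+1}^2\le\max(\rho_j^2,\delta_{j+2})$. Chasing this down from $j=0$ using the supports of the $\tilde\chi_j$ (which are nested in the $\Sigma$-neighbourhoods) produces the claimed $\tfrac32\delta_{q+2}^{1/2}\le\rho_q\le 2\delta_{q+1}^{1/2}$ on $\supp\tilde\chi_q$, where the constants $\tfrac32$ and $2$ are exactly the cut-off thresholds in the definitions of $\phi,\tilde\phi$.

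Part (ii) is the easiest: from \eqref{e:rho-q+1}, $\rho_{q+1}^2-\rho_q^2=\chi_q^2(\delta_{q+2}-\rho_q^2)$, so $\rho_{q+1}\le\rho_q$ holds at $x$ precisely when $\rho_q(x)^2\ge\delta_{q+2}$ or $\chi_q(x)=0$. But if $\chi_q(x)\ne 0$ then $x\in\supp\chi_q\subset\supp\tilde\chi_q$ (by the second inclusion in \eqref{e:chi-define}, or directly $\tilde\phi=1$ on $\supp\phi$), and then (i) gives $\rho_q(x)\ge\tfrac32\delta_{q+2}^{1/2}>\delta_{q+2}^{1/2}$; so in all cases $\rho_{q+1}(x)\le\rho_q(x)$. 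For (iii) I would induct on $q$ as well: suppose $\rho_q(x)\le\delta_{q+1}^{1/2}$. By (i), $x\notin\supp\tilde\chi_{q-1}$ would follow if I knew $\rho_{q-1}(x)<\tfrac32\delta_{q+1}^{1/2}$; since on $\supp\tilde\chi_{q-1}$ part (i) forces $\rho_{q-1}\ge\tfrac32\delta_{q+1}^{1/2}$ and moreover \eqref{e:rho-q+1} at level $q-1$ then gives $\rho_q^2\ge\delta_{q+1}\cdot\chi_{q-1}^2+\rho_{q-1}^2(1-\chi_{q-1}^2)\ge\tfrac94\delta_{q+1}(1-\chi_{q-1}^2)+\delta_{q+1}\chi_{q-1}^2>\delta_{q+1}$ on $\supp\chi_{q-1}$, contradicting the hypothesis unless $\chi_{q-1}(x)=0$; but if $\chi_{q-1}(x)=0$ then $\rho_q(x)=\rho_{q-1}(x)$ and we may repeat, and also $x\notin\supp\chi_{q-1}$. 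One still has to rule out $x\in\supp\tilde\chi_{q-1}\setminus\supp\chi_{q-1}$, which I would handle by noting that $\rho_{q-1}=\rho_q$ there (since $\chi_{q-1}=0$ on that set is false in general — rather, I should use that on $\supp\tilde\chi_{q-1}$ part (i) gives the lower bound $\rho_{q-1}\ge\tfrac32\delta_{q+1}^{1/2}$, and since $\rho_q\ge$ something comparable by the convex-combination formula, again $\rho_q>\delta_{q+1}^{1/2}$). Iterating down to $j=0$ then shows $x\notin\bigcup_{j<q}\supp\tilde\chi_j$ and hence $\rho_q(x)=\rho_0(x)=\rho(x)$ since none of the cut-offs were active at $x$.

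Finally, for (iv): suppose $\rho_q(x)\ge\delta_{q+1}^{1/2}$ and $x\in\Sigma_{q+1}$, so $\psi(\mathrm{dist}(x,\Sigma)/r_{q+1})=1$ because $\Sigma_{q+1}=\{\mathrm{dist}(\cdot,\Sigma)<r_* r_q\}$ and $\psi\equiv 1$ for argument $\le r_*$ — here I need to match the normalisation $r_q$ versus $r_{q+1}$ in the definitions of $\chi_q$ and $\Sigma_{q+1}$, checking that the scaling is consistent. It then remains to show the $\phi$-factor equals $1$, i.e. $\rho(x)\ge 2\delta_{q+2}^{1/2}$; from the first inclusion in \eqref{e:chi-define} this is exactly the content of $\{x\in\Sigma_{q+1}:\rho(x)>2\delta_{q+2}^{1/2}\}\subset\{\chi_q=1\}$, but I actually need to pass from $\rho_q(x)\ge\delta_{q+1}^{1/2}$ to $\rho(x)>2\delta_{q+2}^{1/2}$, which follows by combining (ii) (so $\rho(x)=\rho_0(x)\ge\rho_q(x)\ge\delta_{q+1}^{1/2}$) with the ordering $\delta_{q+1}\ge 4\delta_{q+2}$ from \eqref{e:ordering}, giving $\rho(x)\ge 2\delta_{q+2}^{1/2}$ with room to spare. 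I expect the main obstacle to be (iii): it is the only part requiring a genuine downward induction coupled with careful bookkeeping of where each cut-off $\chi_j$ is (or is not) active at the fixed point $x$, and in particular controlling the transition region $\supp\tilde\chi_j\setminus\supp\chi_j$ where $\rho_{j+1}$ is a nontrivial convex combination; the other three parts are essentially immediate once (i) is available.
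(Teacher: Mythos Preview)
Your proposal is correct and follows essentially the same route as the paper: induction on $q$ for (i), the convex-combination identity \eqref{e:rho-q+1} together with (i) for (ii), a backward recursion using \eqref{e:rho-q+1} and (i) for (iii), and (ii) combined with \eqref{e:ordering} and the first inclusion in \eqref{e:chi-define} for (iv). You are more explicit (and more cautious) about the transition region in (iii) than the paper, which compresses that step to two lines by directly propagating $\rho_{q-1}(x)\leq\delta_{q+1}^{1/2}$ from the formula; but the structure and key ideas are identical.
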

\begin{proof} 
\hfill

\noindent{\bf{(i)} }
The statement (i) holds for the case $q=0$ by the property \eqref{e:chi-define} of $\chi_0$ and the definition of $\delta_1$, i.e., \eqref{e:deltabar}. 
We now proceed by induction and assume that (i) holds for $\rho_q$, and let $x\in \supp\tilde\chi_{q+1}$. Observe that in particular $x\in \Sigma_{q+1}$. We consider two cases.

\textit{Case 1.} If $x\notin\supp\chi_q$, then from \eqref{e:chi-define} it follows that
$$
\tfrac32\delta_{q+3}^{1/2}\leq\rho_0(x)\leq 2\delta_{q+2}^{1/2}.
$$
Furthermore, since $x\in \Sigma_{q+1}\subset\tilde\Sigma_{q+1}$, using the definition of $\tilde\chi_q$ we deduce that necessarily $\delta_{q+2}^{-1/2}\rho(x)\notin \supp\tilde\phi$. Consequently $x\notin\supp\chi_j$ for all $j\leq q$, and hence $\rho_{q+1}(x)=\rho_0(x)$. This concludes (i) in this case.  

\textit{Case 2.} If $x\in\supp\chi_{q}\subset\supp\tilde\chi_q$, then from the induction hypothesis we have $\rho_q(x)\geq\tfrac32\delta_{q+2}^{1/2}$. Hence
$$
\rho_{q+1}(x)\geq\min(\rho_q(x),\delta_{q+2}^{1/2})= \delta_{q+2}^{1/2} \geq\tfrac32\delta_{q+3}^{1/2}.
$$
For the upper bound, either $\chi_q(x)=1$ in which case $\rho_{q+1}(x)=\delta_{q+2}^{1/2}$, or $0<\chi_q(x)<1$, implying by \eqref{e:chi-define} (and since $x\in \Sigma_{q+1}$) that $\rho_0(x)\leq 2\delta_{q+2}^{1/2}$. As above, we deduce that in this case $x\notin\supp\chi_j$ for all $j\leq q-1$ and $\rho_{q}(x)=\rho_0(x)$. It follows that $\rho_{q}(x)\leq 2\delta_{q+2}^{1/2}$, hence
$$
\rho_{q+1}(x)\leq\max(\rho_q(x),\delta_{q+2}^{1/2})\leq 2\delta_{q+2}^{1/2}.
$$
In either case we conclude (i).

\medskip

\noindent{\bf{(ii)} }
Note that from the definition \eqref{e:rho-q+1} it follows
$$
\rho_{q+1}(x)\leq\max(\rho_q(x),\delta_{q+2}^{1/2}).
$$
Moreover, if $\rho_q(x)\leq \delta_{q+2}^{1/2}$, then (i) implies that $\chi_q(x)=0$ and consequently $\rho_{q+1}(x)=\rho_q(x)$. The conclusion (ii) easily follows.

\medskip

\noindent{\bf{(iii) }}
Assume that $\rho_q(x)\leq \delta_{q+1}^{1/2}$. From the expression \eqref{e:rho-q+1} we deduce that $\rho_{q-1}(x)\leq \delta_{q+1}^{1/2}$, and by the same reasoning we further deduce recursively that $\rho_j(x)\leq \delta_{q+1}^{1/2}$ for all $j\leq q$. From (i) the conclusion (iii) then follows.

\medskip

\noindent{\bf{(iv) }}
Assume that $\rho_q(x)\geq \delta_{q+1}^{1/2}$ and $x\in \Sigma_{q+1}$. Then, using (ii) we deduce $\rho(x)\geq \delta_{q+1}^{1/2}>2\delta_{q+2}^{1/2}$, hence from \eqref{e:chi-define} $\chi_q(x)=1$.

\end{proof}

%

Now we are ready to inductively construct  a sequence of adapted short immersions.

\subsection{Inductive construction} 
We will construct a sequence of smooth
adapted short immersions $(u_q, \rho_q, h_q)$ such that the following hold:
\begin{itemize}
\item[$(1)_q$] For all $\mathcal{M},$ we have
$$g-u_q^\sharp e=\rho_q^2(g+h_q).$$
\item[$(2)_q$] If $x\notin\bigcup_{j=0}^{q-1}\supp\tilde\chi_j$, then $(u_q, \rho_q, h_q)=(u_0, \rho_0, h_0).$
\item[$(3)_q$] The following estimates hold in $\mathcal{M}$:
\begin{align}
|\nabla^2u_q|\leq A^{b^2}\rho_q^{1-\frac{b^2}{\theta}},\quad & |\nabla\rho_q|\leq A^{b^2}\rho_q^{1-\frac{b^2}{\theta}} ,\label{e:inductive-v-rho-j}\\
|h_q|\leq A^{-\frac{\theta\alpha}{2b^2}}\rho_q^{\frac{\alpha}{2b^2}},\quad &|\nabla h_q|\leq A^{{b^2}-\frac{\theta\alpha}{2b^2}}\rho_q^{\frac{\alpha}{2b^2}-\frac{b^2}{\theta}},
\label{e:inductive-h-j}
\end{align}
\item[$(4)_q$] On $\{x: \rho_0(x)>\delta_{q+1}^{1/2}\}\cap\Sigma_q,$  we have the sharper estimates
\begin{align}
|\nabla^2u_q|\leq A^{b}\rho_q^{1-\frac{b}{\theta}}, \quad &|\nabla\rho_q|\leq A^b\rho_q^{1-\frac{b}{\theta}} ,\label{e:inductive-rho-q}\\
|h_q|\leq A^{-\frac{\theta\alpha}{b}}\rho_q^{\frac{\alpha}{b}}, \quad &|\nabla h_q|\leq A^{b-\frac{\theta\alpha}{b}}\rho_q^{\frac{\alpha}{b}-\frac{b}{\theta}}.
\label{e:inductive-h-q}
\end{align}
\item[$(5)_q$] We have the global estimate for $q\geq1$
\begin{align}
&\|u_q-u_{q-1}\|_0\leq \overline{C}\delta_q^{1/2}\lambda_{q}^{-1},\label{e:inductive-v-difference-0}\\
&\|u_q-u_{q-1}\|_1\leq\overline{C}\delta_q^{1/2},\label{e:inductive-v-difference-1}
\end{align}
where $\overline{C}$ is the constant in the conclusions of Corollary \ref{c:metric-1} in \eqref{e:stage-2}-\eqref{e:stage-3}.
\end{itemize}

%

\subsection*{Initial step $q=0$} Set $(u_0, \rho_0, h_0)=(u, \rho, h).$ Since $b>1,$ it is easy to check $(1)_0-(2)_0$ and $(4)_0$ from \eqref{e:inductive-u-rho-assumption}.

\subsection*{Inductive step $q\mapsto q+1$} Suppose $(u_q, \rho_q, h_q)$  is an adapted short immersion on $\mathcal{M}$ satisfying $(1)_q-(5)_q.$ We then construct $(u_{q+1}, \rho_{q+1}, h_{q+1}).$ In fact, $\rho_{q+1}$ has already been defined in \eqref{e:rho-q+1}. 
We shall estimate $(u_q, \rho_q, h_q)$ on $\supp\tilde\chi_q.$  By (i) in Lemma \ref{l:rho}, on $\supp\tilde\chi_q,$
\begin{equation}\label{e:rho-q-bound}
\tfrac{3}{2}\delta_{q+2}^{1/2}\leq\rho_q\leq2\delta_{q+1}^{1/2}.
\end{equation}
If $\frac{3}{2}\delta_{q+2}^{1/2}\leq\rho_q(x)\leq\delta_{q+1}^{1/2},$ then using Lemma \ref{l:rho} (iii) we infer that  $x\not\in\cup_{j=0}^{q-1}\supp\tilde\chi_q$ so that $(u_q, \rho_q, h_q)=(u_0, \rho_0, h_0)$. From \eqref{e:inductive-u-rho-assumption} one has
\begin{equation}\label{e:rho-q-less}
\begin{split}
|\nabla^2u_q|&=|\nabla^2 u_0|\leq A\rho_0^{1-\frac{1}{\theta}}\leq A\delta_{q+2}^{\frac{1}{2}(1-\frac{1}{\theta})}= \delta_{q+2}^{1/2}\lambda_{q+2},\\
|\nabla\rho_q|&=|\nabla\rho_0|\leq A\rho_0^{1-\frac{1}{\theta}}\leq A\delta_{q+2}^{\frac{1}{2}(1-\frac{1}{\theta})}= \delta_{q+2}^{1/2}\lambda_{q+2},\\
\left|\frac{\nabla\rho_q}{\rho_q}\right|&=\left|\frac{\nabla\rho_0}{\rho_0}\right|\leq A\rho_0^{-\frac{1}{\theta}}\leq A\delta_{q+2}^{-\frac{1}{2\theta}}= \lambda_{q+2},\\
|h_q|&=|h_0|\leq A^{-\theta\alpha}\rho_0^{\alpha}\leq A^{-\theta\alpha}\delta_{q+1}^{\alpha/2}=
\lambda_{q+1}^{-\theta\alpha},\\
|\nabla h_q|&=|\nabla h_0|\leq A^{1-\theta\alpha}\rho_0^{\alpha-\frac{1}{\theta}}\leq  A^{1-\theta\alpha}\delta_{q+2}^{\frac{1}{2}(\alpha-\frac{1}{\theta})}= \lambda_{q+2}^{1-\alpha\theta}.
\end{split}
\end{equation}
On the other hand, if $\delta_{q+1}^{1/2}<\rho_q(x)\leq2\delta_{q+1}^{1/2},$ then Lemma \ref{l:rho} (ii) implies $\rho_0(x)\geq\rho_q(x)>\delta_{q+1}^{1/2}$. Therefore \eqref{e:inductive-rho-q}-\eqref{e:inductive-h-q} in $(4)_q$ leads to 
\begin{equation}\label{e:rho-q-large}
\begin{split}
|\nabla^2u_q|&\leq A^b\rho_q^{1-\frac{b}{\theta}}\leq A^b\delta_{q+1}^{\frac{1}{2}(1-\frac{b}{\theta})}= \delta_{q+1}^{1/2}\lambda_{q+2},\\
|\nabla\rho_q|&\leq A^b\rho_q^{1-\frac{b}{\theta}}\leq A^b\delta_{q+1}^{\frac{1}{2}(1-\frac{b}{\theta})}= \delta_{q+1}^{1/2}\lambda_{q+2},\\
\left|\frac{\nabla\rho_q}{\rho_q}\right|&\leq A^b\rho_q^{-\frac{b}{\theta}}\leq A^b\delta_{q+1}^{-\frac{b}{2\theta}}= \lambda_{q+2},\\
|h_q|&\leq A^{-\frac{\theta\alpha}{b}}\rho_q^{\frac{\alpha}{b}}\leq 2A^{-\frac{\theta\alpha}{b}}\delta_{q+1}^{\frac{\alpha}{2b}}=2\lambda_{q+1}^{-\frac{\alpha\theta}{b}}, \\
|\nabla h_q|&\leq A^{b-\frac{\theta\alpha}{b}}\rho_q^{\frac\alpha b-\frac{b}{\theta}}\leq A^{b-\frac{\theta\alpha}{b}}\delta_{q+1}^{\frac{1}{2}(\frac{\alpha}{b}-\frac{b}{\theta})}=\lambda_{q+1}^{b-\frac{\theta\alpha}{b}}.
\end{split}
\end{equation}
Combining \eqref{e:rho-q-less} with \eqref{e:rho-q-large}, using \eqref{e:inductive-rho-q}-\eqref{e:inductive-h-q}, one finally has on $\supp\tilde\chi_q,$
\begin{equation}\label{e:support-rho_q}
\begin{split}
\tfrac{3}{2}\delta_{q+2}^{1/2}&\leq \rho_q\leq2\delta_{q+1}^{1/2},\\
 |\nabla \rho_q|&\leq \delta_{q+1}^{1/2}\lambda_{q+2} ,\, |\nabla^2u_q|\leq \delta_{q+1}^{1/2}\lambda_{q+2},\,\left|\frac{\nabla\rho_q}{\rho_q}\right|\leq \lambda_{q+2}\\
|h_q|&\leq 2\lambda_{q+2}^{-\frac{\theta\alpha}{b^2}}, \,\, |\nabla h_q|\leq \lambda_{q+2}^{1-\frac{\theta\alpha}{b^2}}.
\end{split}
\end{equation}
We then apply Corollary \ref{c:metric-1} to construct $(u_{q+1}, h_{q+1}).$ To this end define
\begin{align*}
\tilde{\rho}_q=\chi_q\sqrt{\rho_q^2-\delta_{q+2}},\quad \tilde h_q=\frac{\tilde\chi_q\rho_q^2}{\rho_q^2-\delta_{q+2}}h_q,
\end{align*}
then
$$\tilde\rho_q^2(g+\tilde h_q)=\chi_q^2(\rho_q^2(g+h_q)-\delta_{q+2}g)=\chi_q^2(g-u_{q}^\sharp e-\delta_{q+2}g).$$
From \eqref{e:rho-q-bound}, one has on $\supp\tilde\chi_q$
$$\tfrac54\delta_{q+2}\leq\rho_q^2-\delta_{q+2}\leq4\delta_{q+1},$$
hence $\tilde\rho_q$ and $ \tilde h_q$ are well defined. Besides, on $\supp\tilde\chi_q,$ we have
 \begin{align*}
 |\nabla\sqrt{\rho_q^2-\delta_{q+2}}|&\leq C|\nabla\rho_q|,\\
\frac{\rho_q^2}{\rho_q^2-\delta_{q+2}}&=1+\frac{\delta_{q+2}}{\rho_q^2-\delta_{q+2}}\leq2,\\
\left|\nabla\frac{\rho_q^2}{\rho_q^2-\delta_{q+2}}\right|&=\left|\nabla\frac{\delta_{q+2}}{\rho_q^2-\delta_{q+2}}\right|
\leq C\left|\frac{\nabla\rho_q}{\rho_q}\right|,
 \end{align*}
where $C$ are geometric constants. Therefore, using \eqref{e:gradient-chi} and \eqref{e:support-rho_q} we can infer
\begin{equation}\label{e:rho-tilde}
\begin{split}
0\leq\tilde\rho_q&\leq\rho_q\leq2\delta_{q+1}^{1/2},\\
|\nabla\tilde\rho_q|&\leq C(|\nabla\chi_q|\rho_q+|\nabla\rho_q|)\leq C\delta_{q+1}^{1/2}\lambda_{q+2},\\
|\tilde h_q|&\leq2|h_q|\leq4\lambda_{q+2}^{-\frac{\theta\alpha}{b^2}},\\
|\nabla\tilde h_q|&\leq C(|\nabla\tilde\chi_q||h_q|+\left|\frac{\nabla\rho_q}{\rho_q}\right||h_q|+|\nabla h_q|)\leq C\lambda_{q+2}^{1-\frac{\theta\alpha}{b^2}}.
\end{split}
\end{equation}
Therefore $(u_q, \tilde\rho_q, \tilde h_q)$ satisfies all the assumptions in Corollary \ref{c:metric-1} on $\supp\tilde\chi_q$ with $\delta, \lambda, \alpha$ given by $4\delta_{q+1}, C\lambda_{q+2}, \frac{\theta\alpha}{4b^2}$ respectively. We recall \eqref{e:singlecharts} that  $\supp\tilde\chi_q$ is contained in a pairwise disjoint union of open sets, each contained in a single chart. Therefore,  we may apply Corollary \ref{c:metric-1} in each open set separately in local coordinates to add the term $\tilde\rho_q^2(g+\tilde h_q)$ and take $\kappa$ in Corollary \ref{c:metric-1} as 
$$
\kappa=1+\frac{2\theta}{b}(b-1+\alpha)>1.
$$
Overall we obtain $u_{q+1}$ and $\mathcal{E}$ such that
$$g-u_{q+1}^\sharp e=(g-u_q^\sharp e)(1-\chi_q^2)+\delta_{q+2}g\chi_q^2+\mathcal{E}.$$
with $u_{q+1}$ satisfying
\begin{equation}\label{e:v-q+1-c2}
|\nabla^2u_{q+1}|\leq C\delta_{q+1}^{1/2}\lambda_{q+2}^{2\kappa-1}=C\delta_{q+1}^{1/2}\lambda_{q+1}^{b+4\theta(b-1)+4\theta\alpha},
\end{equation}
and $\mathcal{E}$ satisfying
\begin{align}
&|\mathcal{E}|\leq C\delta_{q+1}\lambda_{q+2}^{1-\kappa}=C\delta_{q+2}\lambda_{q+1}^{-2\theta\alpha}, \label{e:error-q+1--1}\\
&|\nabla\mathcal{E}|\leq C\delta_{q+1}\lambda_{q+2}^{\kappa}=C\delta_{q+2}\lambda_{q+1}^{b+{4\theta}(b-1)+2\theta\alpha}, \label{e:error-q+1--2}
\end{align}
which are implied by \eqref{e:stage-4}-\eqref{e:stage-5}. From  \eqref{e:error-support}, one gets
$$\supp(u_{q+1}-u_q), \,\supp\mathcal{E}\subset\supp\chi_q+B_{\tau_q}(0),$$
with
$$
\tau_q=(C\lambda_{q+2})^{-\kappa}\leq A^{-{2\theta}(b-1+\alpha)}\lambda_{q+2}^{-1}\leq C^{-1}\lambda_{q+2}^{-1},
$$
where $C$ is the constant in \eqref{e:chi-q-support} and the last inequality holds provided $A$ is sufficiently large. Consequently $u_{q+1}=u_q$ and $\mathcal{E}=0$ outside  $\supp\tilde\chi_q$.

Moreover, \eqref{e:inductive-v-difference-0} and \eqref{e:inductive-v-difference-1} for the case $q+1$ follow immediately from \eqref{e:stage-2}-\eqref{e:stage-3}, hence $(5)_{q+1}$ is verified. We also define
$$h_{q+1}=(1-\chi_q^2)\frac{\rho_q^2}{\rho_{q+1}^2}h_q+\frac{\mathcal{E}}{\rho_{q+1}^2}$$
so that $$g-u_{q+1}^\sharp e=\rho_{q+1}^2(g+h_{q+1}),$$
verifying $(1)_{q+1}.$  Note that on $\supp\tilde\chi_q$ using \eqref{e:rho-q-bound} one has
\begin{equation}\label{e:rhoq+1-bound}
\begin{split}
\rho_{q+1}^2&\leq4\delta_{q+1}(1-\chi_q^2)+\delta_{q+2}\chi_q^2\leq 4\delta_{q+1},\\
\rho_{q+1}^2&\geq\tfrac94\delta_{q+2}(1-\chi_q^2)+\delta_{q+2}\chi_q^2\geq\delta_{q+2}.
\end{split}
\end{equation}
Thus both $\mathcal{E}$ and $h_{q+1}$ are well defined. Besides we can also derive that $(\rho_{q+1}, h_{q+1})$ agrees with $(\rho_q, h_q)$ outside $\supp\tilde\chi_q.$ It remains to verify $(2)_{q+1}-(4)_{q+1}$ on $\supp\tilde\chi_q.$

\subsubsection*{Verification of $(2)_{q+1}$} If $x\not\in\bigcup_{j=0}^{q}\supp\tilde\chi_j$, then  $\tilde\chi_q(x)=0$ and therefore
$$(u_{q+1}, \rho_{q+1}, h_{q+1})=(u_q, \rho_q, h_q)=(u_0, \rho_0, h_0).$$

\subsubsection*{Verification of $(3)_{q+1}$}  On $\supp\tilde\chi_q,$ we first calculate 
\begin{equation}\label{e:gradient-rho-q+1}
\begin{split}
|\nabla\rho_{q+1}|&=\frac{|\nabla\rho_q^2|}{2\rho_{q+1}}\leq\frac{C}{\rho_{q+1}}(|\rho_q\nabla\rho_q|
+|\nabla\chi_q|(\rho_q^2+\delta_{q+2}))\\
&\leq C\frac{\delta_{q+1}\lambda_{q+2}}{\delta_{q+2}^{1/2}}=CA^{b+(b-1)\theta}\delta_{q+1}^{1-\frac{b}{2}(1+\frac{1}{\theta})}\\
&\leq A^{b^2}(2\delta_{q+1}^{1/2})^{1-\frac{b^2}{\theta}}\leq A^{b^2}\rho_{q+1}^{1-\frac{b^2}{\theta}},
\end{split}
\end{equation}
where we have used \eqref{e:gradient-chi}, \eqref{e:support-rho_q} and \eqref{e:rhoq+1-bound}. For the inequality in the last line we have used that $1-\frac{b}{2}(1+\frac{1}{\theta})\geq \frac{1}{2}(1-\frac{b^2}{\theta})$, $5(b-1)\theta+b\leq b^2$ (from \eqref{e:theta-b-condition}) and $A$ sufficiently large to absorb geometric constants.  

Similarly, using \eqref{e:support-rho_q}, \eqref{e:v-q+1-c2}-\eqref{e:error-q+1--1} and \eqref{e:rhoq+1-bound} we obtain
\begin{equation}\label{e:hq+1-bound-v-c2}
\begin{split}
|h_{q+1}|&\leq|h_q|+\frac{|\mathcal{E}|}{\rho_{q+1}^2}\leq 2\lambda_{q+2}^{-\frac{\alpha\theta} {b^2}}+C\lambda_{q+1}^{-2\theta\alpha}\leq CA^{-\frac{\alpha\theta}{b^2}}\delta_{q+2}^{\frac{\alpha}{2b^2}}\\
&\leq A^{\frac{-\alpha\theta}{2b^2}}\rho_{q+1}^{\frac{\alpha}{2b^2}}\,,\\
|\nabla^2u_{q+1}|&\leq C\delta_{q+1}^{1/2}\lambda_{q+1}^{b+4\theta(b-1)+4\alpha\theta}\leq C\delta_{q+1}^{1/2}\lambda_{q+1}^{b^2-\theta(b-1)}\leq CA^{b^2-\theta(b-1)}\delta_{q+1}^{\frac{1}{2}(1-\frac{b^2}{\theta})}\\
&\leq A^{b^2}\rho_{q+1}^{1-\frac{b^2}{\theta}},
\end{split}
\end{equation}
where we have used $5\theta(b-1)+4\alpha\theta\leq b^2-b$ (c.f.~\eqref{e:theta-b-condition}) and again assumed $A$ sufficiently large to absorb the constants $C$.
For $|\nabla h_{q+1}|,$ we calculate as follows.
\begin{equation}\label{e:gradient-h-q+1}
\begin{split}
|\nabla h_{q+1}|&\leq|\nabla h_q|+\frac{1}{\rho_{q+1}^2}(|\nabla\mathcal{E}|+\delta_{q+2}|\nabla(h_q\chi_q^2)|)+\frac{2|\nabla\rho_{q+1}|}{\rho_{q+1}^3}(\delta_{q+2}|h_q|+|\mathcal{E}|)\\
&\leq C\lambda_{q+2}^{1-\frac{\theta\alpha}{b^2}}
+C(\lambda_{q+1}^{b+4\theta(b-1)+2\theta\alpha}+\lambda_{q+2}^{1-\frac{\theta\alpha}{b^2}})+C\frac{\delta_{q+1}\lambda_{q+2}}{\delta_{q+2}}
(\lambda_{q+2}^{\frac{-\theta\alpha}{b^2}}+\lambda_{q+1}^{-2\theta\alpha})\\
&\leq C\lambda_{q+2}^{1-\frac{\theta\alpha}{b^2}}
+C\lambda_{q+1}^{b+4\theta(b-1)+2\theta\alpha}+C\frac{\delta_{q+1}}{\delta_{q+2}}
\lambda_{q+2}^{1-\frac{\theta\alpha}{b^2}}\\
&\leq C\lambda_{q+1}^{b+4\theta(b-1)+2\theta\alpha},
\end{split}
\end{equation}
where we have used \eqref{e:gradient-chi}, \eqref{e:support-rho_q}, \eqref{e:error-q+1--1}, \eqref{e:error-q+1--2} and \eqref{e:gradient-rho-q+1}.
Using again the inequality $5\theta(b-1)+4\alpha\theta\leq b^2-b$,  we further estimate
\begin{equation}\label{e:gradient-h-q+1-0}
\begin{split}
|\nabla h_{q+1}|&\leq C\lambda_{q+1}^{b^2-2\alpha\theta-\theta(b-1)}= CA^{b^2-2\alpha\theta-\theta(b-1)}\delta_{q+1}^{\alpha-\frac{b^2}{2\theta}}\\
&\leq A^{b^2-2\alpha\theta}\rho_{q+1}^{2\alpha-\frac{b^2}{\theta}},
\end{split}
\end{equation}
where we have again used that $A$ is sufficiently large. Thus we have shown \eqref{e:inductive-v-rho-j} for $q+1,$ i.e. $(3)_{q+1}$ is verified.

\subsubsection*{Verification of $(4)_{q+1}$} Observe that
\begin{align*}
\{x\in\Sigma_{q+1}: \rho_0(x)>\delta_{q+2}^{1/2}\}
=\{\chi_q(x)=1\}\cup\{x\in\Sigma_{q+1}: \delta_{q+2}^{1/2}\leq\rho_0(x)\leq2\delta_{q+2}^{1/2}\}.
\end{align*}
If $x\in\{\chi_q=1\},$ then
$$\rho_{q+1}=\delta_{q+2}^{1/2},\quad  h_{q+1}=\frac{\mathcal{E}}{\delta_{q+2}}.$$
Using \eqref{e:v-q+1-c2}, 
\begin{equation}\label{e:vq+1}
|\nabla^2 u_{q+1}|\leq C\delta_{q+1}^{1/2}\lambda_{q+1}^{b+4\theta(b-1)+4\alpha\theta}\leq C\delta_{q+1}^{1/2}\lambda_{q+1}^{2b-\theta(b-1)-1}\leq CA^{2-\frac{1}{b}-b}A^b\delta_{q+2}^{\frac{1}{2}(1-\frac{b}{\theta})},
\end{equation}
where we have used $5\theta(b-1)+4\alpha\theta=b-1$ (c.f.~\eqref{e:theta-b-condition}). Since $2-\frac{1}{b}<b$, by taking $A$ sufficiently large we absorb the geometric constant $C$ and deduce \eqref{e:inductive-rho-q}. 

In order to verify \eqref{e:inductive-h-q} we calculate using \eqref{e:error-q+1--1}-\eqref{e:error-q+1--2}:
\begin{equation*}
\begin{split}
|h_{q+1}|&\leq C\lambda_{q+2}^{-\frac{2\alpha\theta}{b}}=CA^{-\frac{2\alpha\theta}{b}}\delta_{q+2}^{\frac{\alpha}{b}},\\
|\nabla h_{q+1}|&\leq C\lambda_{q+1}^{b+4\theta(b-1)+2\alpha\theta}\leq C\lambda_{q+2}^{b-\frac{2\alpha\theta}{b}}=CA^{b-\frac{2\alpha\theta}{b}}\delta_{q+2}^{\frac{\alpha}{b}-\frac{b}{2\theta}}
\end{split}
\end{equation*}
using once again \eqref{e:theta-b-condition}. By choosing $A$ sufficiently large, we can then absorb again the geometric constants and conclude  \eqref{e:inductive-h-q}. Hence $(4)_{q+1}$ is obtained for this case.

\smallskip

On the other hand, if $x\in\{x\in\Sigma_{q+1}: \delta_{q+2}^{1/2}\leq\rho_0(x)\leq2\delta_{q+2}^{1/2}\},$
then $(u_q, \rho_q, h_q)=(u_0, \rho_0, h_0)$ by $(2)_q$ and $\rho_0\leq2\delta_{q+2}^{1/2}$.
Thus
\begin{equation}\label{e:rho-q+1-bound}
\begin{split}
\rho_{q+1}^2&\geq\delta_{q+2}(1-\chi_q^2)+\delta_{q+2}\chi_q^2\geq\delta_{q+2},\\
\rho_{q+1}^2&\leq4\delta_{q+2}(1-\chi_q^2)+\delta_{q+2}\chi_q^2\leq4\delta_{q+2}.
\end{split}
\end{equation}
Therefore, choosing again $A$ sufficiently large to absorb geometric constants,  
\begin{equation}
\label{e:h-q+1-bound}
\begin{split}
|h_{q+1}|&\leq |h_0|+\left|\frac{\mathcal{E}}{\rho_{q+1}^2}\right|\\
&\leq A^{-\alpha\theta}\delta_{q+2}^{\alpha/2}+C\lambda_{q+2}^{-\frac{2\alpha\theta}{b}}\\
&\leq A^{-\frac{\alpha\theta}{b}}\delta_{q+2}^{\frac{\alpha}{2b}}\leq A^{-\frac{\alpha\theta}{b}}\rho_{q+1}^{\frac{\alpha}{b}}.
\end{split}
\end{equation}
Moreover, calculating as in \eqref{e:gradient-rho-q+1} but this time using \eqref{e:rho-q+1-bound}
\begin{equation*}
\begin{split}
|\nabla\rho_{q+1}|&=\frac{|\nabla\rho_q^2|}{2\rho_{q+1}}\leq\frac{C}{\rho_{q+1}}(|\rho_q\nabla\rho_q|
+|\nabla\chi_q|(\rho_q^2+\delta_{q+2}))\\
&\leq C\delta_{q+2}^{1/2}\lambda_{q+2}=CA\delta_{q+2}^{\frac{1}{2}(1-\frac{1}{\theta})}\\
&\leq A^{b}\delta_{q+2}^{\frac{1}{2}(1-\frac{b}{\theta})}\leq A^{b}\rho_{q+1}^{1-\frac{b}{\theta}}.
\end{split}
\end{equation*}
Similarly, proceeding as in \eqref{e:gradient-h-q+1}-\eqref{e:gradient-h-q+1-0} we have
\begin{equation*}
|\nabla h_{q+1}|\leq C\lambda_{q+2}^{b-\frac{2\alpha\theta}{b}-\theta(1-\frac{1}{b})}=CA^{b-\frac{2\alpha\theta}{b}-\theta(1-\frac{1}{b})}\delta_{q+2}^{\frac{\alpha}{b}-\frac{b}{2\theta}+\frac{1}{2}(1-\frac{1}{b})}\leq A^{b-\frac{\theta\alpha}{b}}\rho_{q+1}^{\frac\alpha b-\frac{b}{\theta}}.
\end{equation*}
Finally, the estimate for $\nabla^2u_{q+1}$ has already been obtained in \eqref{e:vq+1}. 
Therefore $(4)_{q+1}$ is verified also in this case.

Overall we have shown that $(u_{q+1}, \rho_{q+1}, h_{q+1})$ satisfies $(1)_{q+1}-(5)_{q+1}.$

\subsection{Conclusion}\label{ss:conlude}

We are now in a position to take the limit as $q\rightarrow\infty.$ Recalling \eqref{e:ordering} we see that $\delta_q^{1/2}\leq 2^{-q-1}$ and $\delta_q^{1/2}\lambda_q^{-1}\leq A^{-1}2^{-q-1}$. In particular from $(5)_q$ we see that $\{u_q\}$ is a Cauchy sequence in $C^1(\mathcal{M})$. 

From the formula \eqref{e:rho-q+1} and Lemma \ref{l:rho} we deduce $0\leq \rho_{q}-\rho_{q+1}\leq 2\delta_{q+1}^{1/2}$, so that $\{\rho_q\}$ is a Cauchy sequence in $C^0(\mathcal{M})$. From $(1)_q-(3)_q$ we can also deduce that $\{h_q\}$ is a Cauchy sequence in $C^0(\mathcal{M})$; indeed, this follows from the formula  $(1)_q$, the fact that $u_q^\sharp e$ and $\rho_q^2$ are Cauchy sequences, and \eqref{e:inductive-h-j}. 

Furthermore, since $\supp\tilde\chi_q\subset\Sigma_q$ and $\bigcap_q\Sigma_q=\Sigma$, using $(2)_q$ we see that for any $x\in \mathcal{M}\setminus \Sigma$ there exists $q_0=q_0(x)$ such that 
$$
(u_{q}, \rho_{q}, h_{q})=(u_{q_0}, \rho_{q_0}, h_{q_0})
$$
for all $q\geq q_0(x)$. Similarly, since $\supp\tilde\chi_q\subset\{\rho>\delta_{q+1}^{1/2}\}$, $(u_q, \rho_q, h_q)$ agrees with $(u,\rho,h)$ on $S$. Thus there exist
\begin{align*}
\bar{u}&\in C^1(\mathcal{M})\cap C^2(\mathcal{M}\setminus \Sigma),\\
\bar{\rho}&\in C^0(\mathcal{M})\cap C^1(\mathcal{M}\setminus\Sigma),\\
\bar{h}&\in C^0(\mathcal{M}, \R^{2\times2})\cap C^1(\mathcal{M}\setminus\Sigma, \R^{2\times 2}),
\end{align*}
such that 
$$
u_q\rightarrow \bar{u}, \quad u_q^\sharp e\rightarrow \bar{u}^\sharp e, \quad \rho_q\rightarrow\bar{\rho},\quad h_q\rightarrow \bar{h}\textrm{ uniformly on }\mathcal{M}.
$$
The limit $(\bar{u}, \bar{\rho}, \bar{h})$ satisfies
$$
g-\bar{u}^\sharp e=\bar{\rho}^2(g+\bar{h})\textrm{ on }\mathcal{M}
$$
using $(1)_q$, 
$$
\|\bar{u}-u\|_0\leq \sum_{q=1}^\infty\|u_q-u_{q-1}\|_0\leq \overline{C}A^{-1}\sum_{q=1}^\infty2^{-q-1}= \frac{1}{2}\overline{C}A^{-1}\leq A^{-1/2}
$$
using $(2)_q$ and ensuring $A$ is large enough to absorb the constant $\overline{C}$,
and, using $(3)_q$,
\begin{align*}
|\nabla^2\bar{u}|\leq A^{b^2}\bar{\rho}^{1-\frac{b^2}{\theta}},\quad & |\nabla\bar{\rho}|\leq A^{b^2}\bar{\rho}^{1-\frac{b^2}{\theta}},\\
|\bar{h}|\leq A^{-\frac{\theta\alpha}{2b^2}}\bar{\rho}^{\frac{\alpha}{2b^2}},\quad &|\nabla \bar{h}|\leq A^{{b^2}-\frac{\theta\alpha}{2b^2}}\bar{\rho}^{\frac{\alpha}{2b^2}-\frac{b^2}{\theta}}.
\end{align*}
Finally, from Lemma \ref{l:rho} and \eqref{e:chi-define} we see that $\rho_q\leq 2\delta_{q+1}^{1/2}$ on $\Sigma$. Combined with the observation above that for any $x\notin \Sigma\supset S$ we have $\bar{\rho}(x)=\rho_q(x)>0$ for some $q$, we deduce $\{\bar{\rho}=0\}=\Sigma$. This proves that $(\bar{u}, \bar{\rho}, \bar{h})$ is an adapted short immersion with respect to $\Sigma\supset S$ with exponent $\theta'=\frac{\theta}{b^2}$, and satisfying \eqref{e:inductive-conclusion} as required. The proof of Proposition \ref{p:inductive} is completed.

\bigskip

\subsection{The case $n\geq 3$}\label{ss:n=3}

The key difference in the higher dimensional situation is that we have to use Corollary \ref{c:metric-2} instead of Corollary \ref{c:metric-1}. Concerning the geometric setup this involves controlling in addition the oscillation of the metric in each chart $\Omega_k$. But this merely requires choosing the finite atlas $\{\Omega_k\}$ for $\mathcal{M}$ in such a way that $\textrm{osc}_{\Omega_k}G<r_0$ for any $k$ -- this can be assumed without loss of generality. 

The difference in estimates in the two corollaries, i.e. \eqref{e:2stage-2}-\eqref{e:2stage-5} instead of \eqref{e:stage-2}-\eqref{e:stage-5}, enters in conclusions \eqref{e:v-q+1-c2}-\eqref{e:error-q+1--2}. In order to retain the same inductive estimates $(3)_q$-$(4)_q$ we therefore choose
\begin{equation}\label{e:theta-b-condition2}
b=1+\frac{2n_*\alpha\theta}{1-(2n_*+1)\theta}
\end{equation}
which in turn requires $0<\theta<\frac{1}{2n_*+1}=\frac{1}{1+n+n^2}$. With these changes Proposition \ref{p:inductive} continues to hold for the case $n\geq 3$.

\section{Proof of main Theorems}\label{s:proof-of-theorem}

We will concentrate on the case of immersions. The extension to embeddings is straight-forward and follows well-established strategies (see \cite{Nash54,SzLecturenotes,DIS15}). 

With Proposition \ref{p:inductive} at our disposal (as well as the higher dimensional analogue explained in Section \ref{ss:n=3}), the strategy for proving Theorem \ref{t:global} (as well as Theorem \ref{t:global-n}) for immersions is clear: we perform an induction on dimension on the skeleta of a given regular triangulation of $\mathcal{M}$.

 Recall that, given $(\mathcal{M},g)$ we have fixed a finite atlas of charts $\{\Omega_k\}$ on $\mathcal{M}$, and in addition we fix a triangulation $\mathcal{T}$ on $\mathcal{M}$ whose skeleta consist of a finite union of $C^1$ submanifolds, such that each triangle $T\in\mathcal{T}$ is contained in a single chart. 

By compactness of $\mathcal{M}$ and a simple mollification argument we may assume that the given short immersion $u$ is smooth and strictly short. In a first step we construct an adapted short immersion $\tilde u$ of $\mathcal{M}$ with respect to $\Sigma=\emptyset$ (i.e. $\tilde u$ is strictly short and satisfies the estimates \eqref{e:adapt-final-2}).  

\begin{proposition}\label{p:strong}
Let $u\in C^2(\mathcal{M};\R^{n+1})$ be a strictly short immersion. There exists $0<\delta^*\leq 1/8$ and $A^*\geq 1$, depending on $u$ and $g$, such that for any $A\geq A^*$ there exists a strong short immersion $\tilde{u}$ and associated $\tilde{h}$ with
\begin{equation}\label{e:strong-1}
g-\tilde{u}^\sharp e=\delta^*(g+\tilde{h}),
\end{equation}
with
\begin{equation}\label{e:strong-2}
\tfrac{1}{2}g\leq \tilde{u}^\sharp e\leq g
\end{equation}
and such that the following estimates hold:
\begin{align}
&\|\tilde{u}-u\|_0\leq \delta^*A^{-\alpha^*}, \quad \|\tilde{u}\|_2\leq A, \label{e:strong-u}\\
&\|\tilde{h}\|_0\leq A^{-\alpha^*} ,\quad \|\tilde{h}\|_1\leq A^{1-\alpha^*}. \label{e:strong-h}
\end{align}
The exponent $\alpha^*$ only depends on $\mathcal{M}$.
\end{proposition}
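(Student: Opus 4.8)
The goal is to upgrade a smooth strictly short immersion into an adapted short immersion with respect to $\Sigma = \emptyset$, which is precisely the data needed to start the skeleta induction in Section~\ref{s:proof-of-theorem}. Since $\Sigma = \emptyset$ means $\{\rho = 0\} = \emptyset$, the ``adapted'' estimates \eqref{e:adapt-final-2} must hold everywhere on the compact manifold $\mathcal{M}$ with $\rho$ bounded below by a positive constant; for a \emph{smooth} immersion such estimates are automatic once we identify the constant $A$ correctly. So the proposition is essentially a normalization statement: we need to shrink $u$ by a definite amount so that the metric error takes the form $\delta^*(g+\tilde h)$ with a genuinely positive $\delta^*$, and then choose $A^*$ large enough in terms of the ($u,g$-dependent) $C^2$ norm of the resulting immersion and the $C^1$ norm of $\tilde h$.

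Here is the plan. First I would use strict shortness: since $u \in C^2(\mathcal{M})$ is strictly short and $\mathcal{M}$ is compact, $g - u^\sharp e$ is a continuous positive-definite symmetric 2-tensor, hence $g - u^\sharp e \geq c_1 g$ for some $c_1 > 0$. Choose $0 < \delta^* \leq 1/8$ with $\delta^* \leq c_1$, and rescale by setting $\tilde u = (1-\delta^*)^{1/2}\,\tilde v$ — actually the cleaner route is to take $\tilde u := \sqrt{1-\delta^*}\, u + (\text{small correction})$; but the simplest choice that works is $\tilde u = \mu u$ for a constant $\mu$ slightly less than $1$. Then $\tilde u^\sharp e = \mu^2 u^\sharp e$, and $g - \tilde u^\sharp e = (g - u^\sharp e) + (1-\mu^2) u^\sharp e$. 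Writing this as $\delta^*(g + \tilde h)$ forces $\tilde h = \delta^{*-1}(g - u^\sharp e) + \delta^{*-1}(1-\mu^2)u^\sharp e - g$. To make $\|\tilde h\|_0 \leq A^{-\alpha^*}$ small we do \emph{not} get to choose $\tilde h$ small by scaling alone — the term $\delta^{*-1}(g - u^\sharp e) - g$ is fixed once $\delta^*$ is fixed. So the scaling ansatz alone is insufficient, and we instead need a genuine perturbation: we should run one application of the local iteration (Corollary~\ref{c:metric-1} patched over the triangulation, or more directly one ``stage'' of the Nash--Kuiper scheme as in Section~\ref{s:iterate}) to reduce the metric error from $g - u^\sharp e$ down to $\delta^*(g + \tilde h)$ with $\|\tilde h\|_0, \|\tilde h\|_1$ as small as we like at the cost of a large (but finite, $u$- and $g$-dependent) $C^2$ norm.

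Concretely: starting from strictly short $u$, write $g - u^\sharp e = \rho_0^2(g + h_0)$ with $\rho_0^2 = $ (a smooth positive function comparable to $1$) — possible by strict shortness and Definition~\ref{d:strong}, shrinking $u$ slightly if necessary so that $h_0$ satisfies $-\tfrac12 g \leq h_0 \leq \tfrac12 g$. Decompose $\rho_0^2(g+h_0) = (\rho_0^2 - \delta^*) (g + h_0') + \delta^*(g + \tilde h)$ where we absorb most of the error into the first piece and keep $\tilde h$ small; then use the Step/Stage machinery of Section~\ref{s:iterate} (over a fixed finite cover with partition of unity, handling the gluing trivially since there is no lower-dimensional set to respect) to add the primitive-metric decomposition of $(\rho_0^2 - \delta^*)(g + h_0')$ to $u^\sharp e$. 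The output immersion $\tilde u$ then satisfies $g - \tilde u^\sharp e = \delta^*(g + \tilde h) + \mathcal{E}$; choosing the frequency parameter $\lambda$ large enough makes $\mathcal{E}$ small, and we fold $\mathcal{E}/\delta^*$ into $\tilde h$. The estimates \eqref{e:strong-u}--\eqref{e:strong-h} follow with $A$ playing the role of the frequency scale: $\|\tilde u\|_2 \lesssim \lambda =: A$, $\|\tilde h\|_1 \lesssim \lambda^{1-\alpha^*} = A^{1-\alpha^*}$, $\|\tilde h\|_0 \lesssim A^{-\alpha^*}$, and $\|\tilde u - u\|_0 \lesssim \lambda^{-\kappa}$ which we can bound by $\delta^* A^{-\alpha^*}$; here $\alpha^*$ is determined by the Hölder exponent appearing in the mollification estimates, which depends only on $\mathcal{M}$ (through the atlas and the decomposition lemma), not on $u$. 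We then set $A^*$ to be the smallest value of $A = \lambda$ for which all constants are absorbed, noting $A^*$ depends on $\|u\|_2$ and $\|g\|_1$.

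The main obstacle is making precise the claim that a \emph{single} stage suffices to drive the error down to an \emph{arbitrarily small} perturbation $\delta^*(g+\tilde h)$ with controlled $\tilde h$: the Step proposition \eqref{e:step-4} only reduces the error by a factor $\delta^{1/2}\nu\lambda^{-1}$ per step, so one genuinely needs the Stage proposition (or a few stages) and must track that the accumulated $C^2$ growth is still a finite constant times $A$. The delicate bookkeeping is to verify that the exponent $\alpha^*$ in \eqref{e:strong-u}--\eqref{e:strong-h} can be taken uniform and dependent only on $\mathcal{M}$ — this comes down to the fixed Hölder exponent $\alpha'$ one uses in applying Proposition~\ref{p:mollification}(3) and Proposition~\ref{p:conformal}, both of which are geometric choices independent of the particular short immersion $u$. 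Everything else (positivity of $\rho_0^2$, the bound $\delta^* \leq 1/8$, \eqref{e:strong-2}) is a routine consequence of strict shortness and compactness.
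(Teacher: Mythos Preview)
Your overall strategy is the same as the paper's: use strict shortness to reserve a piece $\delta^*g$ of the deficit, decompose the remainder $g - u^\sharp e - \delta^*g$ into a finite sum $\sum_{k,j}^{N_*}(\phi_k a_j)^2\xi_j\otimes\xi_j$ of primitive metrics via the partition of unity and Nash's decomposition lemma, apply the Stage proposition once to add this sum, and set $\tilde h = -\mathcal{E}/\delta^*$ from the resulting error. (The preliminary decomposition $\rho_0^2(g+h_0) = (\rho_0^2-\delta^*)(g+h_0')+\delta^*(g+\tilde h)$ you write is unnecessary: there is no need to fix $\tilde h$ in advance.)

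However, your identification of $A$ with the single-step frequency and your explanation of $\alpha^*$ are wrong, and this is exactly the ``delicate bookkeeping'' you flag. A stage consisting of $N^*$ steps yields $\|\tilde u\|_2 \lesssim \Lambda^{N^*}$ while the error only improves as $\|\mathcal{E}\|_0 \lesssim \Lambda^{-1}$ and $\|\mathcal{E}\|_1 \lesssim \Lambda^{N^*-1}$; the $C^2$ norm is \emph{not} $\lesssim \Lambda$. The paper therefore takes $\Lambda = (C_1/\delta^*)A^{\alpha^*}$ with $\alpha^* = 1/(8N^*)$, so that $C_1\Lambda^{N^*}\leq A$, $C_1\Lambda^{N^*-1}\leq A^{1-\alpha^*}$, and $C_1/(\delta^*\Lambda)\leq A^{-\alpha^*}$ once $A$ is large. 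Thus $\alpha^*$ is not a H\"older exponent coming from mollification or Proposition~\ref{p:conformal}; it is $1/(8N^*)$, determined by the number of primitive metrics in the global decomposition, which depends only on $\mathcal{M}$ through the fixed atlas. Your claim about the dependence of $\alpha^*$ happens to be correct, but the mechanism you describe would not produce the required inequalities if written out.
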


\begin{proof}
Since $u$ is strictly short and $\mathcal{M}$ is compact, there exists $0<\delta^*\leq 1/8$ such that
\begin{align*}
g-u^\sharp e-\delta^* g \geq \delta^* g.
\end{align*}
Using Lemma 1 in \cite{Nash54} (see also Lemma 1 in \cite{SzLecturenotes}), we decompose the metric error into finite number of primitive metrics in the different charts $\Omega_k$ as
\begin{equation}
\label{e:strong-decompose}
g-u^\sharp e-\delta^* g=\sum_{k, j}^{N_*}(\phi_k(x)a_j(x))^2\xi_j\otimes\xi_j,
\end{equation}
where $\xi_j\in \mathbb S^2$. 
Utilizing Proposition \ref{p:stage} (or Proposition 3.2 in \cite{CS19}), we obtain for any sufficiently large $\Lambda\gg 1$ an immersion $\tilde{u}$ and metric error $\mathcal{E}$ such that
\begin{align*}
\tilde{u}^\sharp e=u^\sharp e+\sum^{N_*}_{k, j}(\phi_k(x)a_j(x))^2\xi_j\otimes\xi_j+\mathcal{E}
\end{align*}
with
\begin{equation}
\label{e:strong-error}
\begin{split}
\|\tilde{u}-u\|_0\leq\frac{C_1}{\Lambda}, &\qquad \|\tilde{u}\|_2\leq C_1\Lambda^{N^*},\\
\|\mathcal{E}\|_0\leq\frac{C_1}{\Lambda}, &\qquad \|\mathcal{E}\|_1\leq C_1\Lambda^{N_*-1},
\end{split}
\end{equation}
where $C_1$ is a constant depending only on $u, g$. Thus using \eqref{e:strong-decompose} we have
\begin{align*}
g-\tilde{u}^\sharp e=\delta^* \left(g-\frac{\mathcal{E}}{\delta^*}\right),
\end{align*}
so that \eqref{e:strong-1} holds with $\tilde{h}=-\frac{\mathcal{E}}{\delta^*}$.
Now choose
$$
\Lambda=\frac{C_1}{\delta^*}A^{\alpha^*}
$$
with $\alpha^*=\tfrac{1}{8N^*}$. Then, for $A\gg 1$ sufficiently large (depending on $C_1$, $N^*$ and $\delta^*$)
we have
$$
C_1\Lambda^{N^*}=\frac{C_1^{N^*+1}}{{\delta^*}^{N^*}}A^{\alpha^* N^*}=\frac{C_1^{N^*+1}}{{\delta^*}^{N^*}}A^{1/8}\leq A
$$
and similarly $C_1\Lambda^{N^*-1}\leq A^{1-\alpha^*}$. Consequently, for $A$ sufficiently large the estimates
\eqref{e:strong-2}-\eqref{e:strong-h} hold.
\end{proof}

\bigskip

Next, fix $\theta_0<1/5$ (or $\theta_0<(1+n+n^2)^{-1}$ if $n\geq 3$) and let $0<\alpha_0<\frac{\alpha^*}{\theta_0}$. 
Set $u_0=\tilde u$, $h_0=\tilde h$ as obtained from Proposition \ref{p:strong} with $A=A_0$ sufficiently large (to be determined below), and also
$\rho_0^2=\delta^*$. From  \eqref{e:strong-u} we deduce
\begin{equation}\label{e:u-tilde-u}
\|u-u_0\|_0\leq\frac{\eps}{4}
\end{equation}
by assuming $A_0$ is sufficiently large. From \eqref{e:strong-u}-\eqref{e:strong-h}, we further have
\begin{align*}
\|\nabla^2u_0\|_0&\leq A_0\leq A_0(\delta^*)^{\frac{1}{2}-\frac{1}{2\theta_0}}\,,\\
\|h_0\|_0&\leq A_0^{-\alpha^*}\leq A_0^{-\alpha_0\theta_0}(\delta^*)^{\frac{\alpha_0}{2}}\,,\\
\|\nabla h_0\|_0&\leq A_0^{1-\alpha^*}\leq A_0^{1-\alpha_0\theta_0}(\delta^*)^{\frac{\alpha_0}{2}-\frac{1}{2\theta_0}}\,.
\end{align*}
Therefore we deduce that $u_0$ is an adapted short immersion with respect to the empty set $\Sigma_0=\emptyset$ with exponent $\theta_0$, and furthermore the estimates \eqref{e:inductive-u-rho-assumption} are satisfied by $(u_0, \rho_0, h_0)$ with $(A,\theta, \alpha)$ replaced by $(A_0,\theta_0, \alpha_0)$.

 Thus we can apply Proposition \ref{p:inductive} to obtain a $C^{1, \theta_1}$ adapted short immersion $(u_1, \rho_1, h_1)$ with respect to $\Sigma_1=\mathcal{V}$, where $\mathcal{V}$ is the vertex set of the triangulation $\mathcal{T}$ and such that \eqref{e:inductive-u-rho-assumption}\eqref{e:inductive-conclusion} hold with
$$
A_1=A_0^{b_0^2}, \quad \theta_1=\frac{\theta_0}{b_0^2}, \quad  \alpha_1=\frac{\alpha_0}{2b_0^2},
$$
where
$$
b_0=\begin{cases}1+\frac{4\alpha_0\theta_0}{1-5\theta_0}\,,& n=2;\\ 1+\frac{2n_*\alpha_0\theta_0}{1-(2n_*+1)\theta_0}\,,& n\geq 3. \end{cases}
$$
Obviously we can continue this process along the skeleta $\Sigma_1\subset\Sigma_2\subset\dots\Sigma_{n+1}=\mathcal{M}$ and obtain 
adapted short immersions $(u_j, \rho_j, h_j)$ with respect to $\Sigma_j$, $j=1,2,\dots,n+1$
with 
$$
A_{j+1}=A_j^{b_j^2}, \quad \theta_{j+1}=\frac{\theta_j}{b_j^2}, \quad  \alpha_{j+1}=\frac{\alpha_j}{2b_j^2},
$$
where
$$
b_j=\begin{cases}1+\frac{4\alpha_j\theta_j}{1-5\theta_j}\,,& n=2;\\ 
1+\frac{2n_*\alpha_j\theta_j}{1-(2n_*+1)\theta_j}\,,& n\geq 3. \end{cases}
$$
After $n+1$ steps we finally obtain a global $C^{\theta_{n+1}}$ isometric immersion $v:=u_{n+1}$ of $\mathcal{M}$, with
$$
\theta_{n+1}=(\Pi_{j=0}^nb_j^{-2})\theta_0.
$$
Note that for any fixed $\theta_0$ the recursively defined exponents $b_j$ each satisfy $b_j\to 1$ as $\alpha_0\to 0,$ and consequently also $\theta_{n+1}\to \theta_0$. Thus, for any $\theta'<\theta_0$ there exists a choice of $\alpha_0>0$ so that $\theta'<\theta_{n+1}<\theta_0$. In this way we can achieve any exponent $\theta<1/5$ for $n=2$ and $\theta<(1+n+n^2)^{-1}$ for $n\geq 3$. 
Finally, observe that (recalling \eqref{e:u-tilde-u})
\begin{align*}
\|u-v\|_0&\leq\|u-u_0\|_0+\sum_{j=0}^n\|u_{j+1}-u_j\|_0\\
&\leq \eps/4+\sum_{j=0}^nA_j^{-1/2}\leq \eps/4+(n+1)A_0^{-1/2}\\
&\leq \eps
\end{align*}
by choosing $A_0$ sufficiently large. 
This completes the proof of Theorem \ref{t:global} and Theorem \ref{t:global-n}.

%
\bigskip



\begin{thebibliography}{999}
\bibitem{ACFJK}
K. Astala, A. Clop, D. Faraco, J. J\"a\"askel\"ainen, and A. Koskia,   Nonlinear Beltrami operators, Schauder estimates and bounds for the Jacobian,
{\it Ann. Inst. H. Poincar\'e Anal. Non Lin\'eaire} {\bf 34}(2017), 6, 1543-1559.

\bibitem{Borisov58}
Yu. F. Borisov, The parallel translation on a smooth surface. {I-IV}.
{\em Vestnik Leningrad. Univ. 13,14}, (1958,1959).

\bibitem{BorisovRigidity1}
Yu. F. Borisov, On the connection between the spatial form of smooth surfaces and
  their intrinsic geometry. {\em Vestnik Leningrad. Univ. 14}, 13 (1959), 20--26.


\bibitem{Bor65}
Yu. F. Borisov, $C^{1, \alpha}$-isometric immersions of Riemannian spaces. {\it Doklady} {\bf 163}
(1965), 869-871.

\bibitem{Bor04}
Yu. F. Borisov,  Irregular $C^{1, \beta}$-surfaces with analytic metric. {\it Sib. Mat. Zh. 45,} {\bf 1}
(2004), 25-61.
%

\bibitem{BDIS15}
T. Buckmaster, , C. De Lellis, P. Isett, and L. Sz \'{e}kelyhidi, Jr.,  Anomalous
dissipation for 1/5-H\"{o}lder Euler flows. {\it Ann. of Math. (2)} {\bf 182} (2015), no. 1 , 127-172.

\bibitem{BDSV17}
T. Buckmaster, C. De Lellis, L. Sz\'ekelyhidi Jr., and V. Vicol, Onsager's conjecture for admissible weak solutions {\it Comm. Pure Appl. Math.} {\bf 72} (2019), no. 2, 229-274.

\bibitem{Buckmaster:2017wfa}
T. Buckmaster and V. Vicol, {{Nonuniqueness of weak solutions to the Navier-Stokes equation}}, {\it Ann. of Math. (2)} {\bf 189} (2019), no. 1, 101-144.


\bibitem{CS19}
W. Cao, L. Sz\'ekelyhidi Jr., $C^{1, \alpha}$ isometric extension, {\it Comm. PDE}  (2019), no. 7, 613-636.



\bibitem{CDK15}
E. Chiodaroli, C. De Lellis, and O. Kreml, Global ill-posedness of the isentropic system of gas dynamics. {\it Comm. Pure Appl. Math.} 68 (2015), No. 7, 1157-1190

\bibitem{Chern:1955jf}
S.~S.~Chern, An elementary proof of the existence of isothermal parameters on a surface, {\it Proc. Amer. Math. Soc.},
{\bf 6} {1955}, no. 5, 771-782.


\bibitem{CDS12}
 S. Conti,  C., De Lellis, , and L. Sz\'ekelyhidi Jr,  $h$-principle and rigidity for
$C^{1, \alpha}$ isometric embeddings. {\it Nonlinear partial differential equations, The Abel Symp. 2010}, Springer-Verlag, 2012, pp. 83-116

\bibitem{CohnVossen}
S. Cohn-Vossen, Zwei S\"atze \"uber die Starrheit der Eifl\"achen. {\it Nachrichten Ges.~d.~Wiss zu G\"ottingen} (1927), 125-134.

\bibitem{D14}
S. Daneri,
Cauchy problem for dissipative H\"{o}lder solutions to the incompressible Euler equations.  {\it Comm. Math. Phys.} 329 (2014), no. 2, 745-786.

\bibitem{DS17}
S. Daneri; L. Sz\'{e}kelyhidi, Jr. Non-uniqueness and h-principle for H\"{o}lder-continuous weak solutions of the Euler equations. {\it Arch. Ration. Mech. Anal.} 224 (2017), no. 2, 471-514.

\bibitem{DeLellis:2018ub}
C. De Lellis and D. Inauen, {{C$^{1,\alpha}$ Isometric Embeddings of Polar Caps}},
{\it arXiv preprint} {1809.04161v1} (2018)


\bibitem{DIS15}
C. De Lellis, D. Inauen, and L. Sz\'ekelyhidi Jr. A Nash-Kuiper theorem for $C^{1, 1/5-\delta}$ immersions on surfacesin 3 dimensions, {\it Rev. Mat. Iberoamericana} 34 (2018), 1119-1152.

\bibitem{DS09}
C. De Lellis, L. Sz\'ekelyhidi Jr., The Euler equations as a differential
inclusion. {\it Ann. of Math. (2)} 170, {\bf 3} (2009), 1417-1436.

\bibitem{DS13}
C. De Lellis, L. Sz\'ekelyhidi Jr., Dissipative continuous Euler flows. {\it Invent.
Math. } 193, {\bf 2} (2013), 377-407.

\bibitem{DS14}
C. De Lellis, L. Sz\'ekelyhidi Jr., Dissipative Euler flows and Onsager's
conjecture. {\it J. Eur. Math. Soc. (JEMS)} 16,{\bf 7} (2014), 1467-1505.

\bibitem{DeLellis:2017dt}
C. De Lellis and L. Sz{\'e}kelyhidi Jr, {{High dimensionality and h-principle in PDE}}.
{\it Bull. Amer. Math. Soc. (N.S.)} 54, {\bf 2}
(2017), 247-282.



\bibitem{GG11}
M. Ghomi and R.E. Greene, Relative isometric embeddings of Riemannian manifolds.
{\it Trans. Amer. Math. Soc.} 363 {\bf 1 }(2011), 63-73.

\bibitem{Gladbach:2019ti}
P. Gladbach and H. Olbermann, {Coarea formulae and chain rules for the Jacobian determinant in fractional Sobolev spaces}, {\it arXiv preprint} 1903.07420v1 (2019).


\bibitem{Gr73}
 M. Gromov,  Convex integration of differential relations. {\it Izv. Akad. Nauk U.S.S.R.}
{\bf 37} (1973), 329-343.

\bibitem{Gr86}
M. Gromov,  Partial Differential Relations. {\it Springer-Verlag,} 1986.

\bibitem{Gromov:2015tua}
M. Gromov, {{Geometric, algebraic, and analytic descendants of Nash isometric embedding theorems}}, {\it Bull. Amer. Math. Soc. (N.S.)} 54, {\bf 2} (2017) 173-245.


\bibitem{Herglotz:1943je} 
G. Herglotz, {\"Uber die Starrheit der Eifl\"achen}. {\it Abh. Math. Sem. Univ. Hamburg} 1 {\bf 15} (1943), 127-129.


\bibitem{HW17}
N. Hungerb\"uhler, M. Wasem. The One-Sided Isometric Extension Problem. {\it Results Math. } {\bf 71} (2017), no. 3-4, 749-781.


\bibitem{IV15}
P. Isett and V. Vicol, {H\"older continuous solutions of active scalar equations.} 
{\it Ann. PDE} {\bf 77} (2015), 1.

\bibitem{Ise16}
P. Isett, A Proof of Onsager's Conjecture.
{\it Ann. of Math. (2)} {\bf188} (2018), no. 3, 871-963.

\bibitem{Kui55}
N. Kuiper. On $C^1$ isometric embeddings I, II. {\it Proc. Kon. Acad. Wet. Amsterdam A} {\bf 58} (1955), 545-556, 683-689.

\bibitem{LP17}
M. Lewicka and M. R. Pakzad,  Convex integration for the Monge-Amp\`{e}re equation in two dimensions. {\it Anal. PDE} {\bf10} (2017), 3, 695-727

\bibitem{Nash54}
J. Nash, $C^1$ isometric imbeddings. {\it Ann. Math.} {\bf 60} (1954), 383-396.

\bibitem{PogorelovRigidity}
A. Pogorelov, The rigidity of general convex surfaces. {\em Doklady Acad. Nauk SSSR {\bf 79}\/} (1951), 739-742.

\bibitem{SzLecturenotes}
L. Sz\'ekelyhidi, Jr.  From isometric embeddings to turbulence. {\it HCDTE lecture notes. Part II. Nonlinear hyperbolic PDEs, dispersive and transport equations,} 63 pp., AIMS Ser. Appl. Math., 7, Am. Inst. Math. Sci. (AIMS), Springfield, MO, 2013.

\end{thebibliography}
\end{document}